\documentclass[11pt]{article}
\usepackage{color}
\usepackage{enumerate, graphicx, amssymb, amsmath, amsthm}
\usepackage{inputenc}
\usepackage{mathrsfs,tikz}
\usepackage[margin=1in]{geometry}
\usepackage{accents}
\usepackage{mathtools}

\newcommand{\mat}[2][rrrrrrrrrrrrrrrrrrrrrrrrrrrrrrrrrrrrrrr]{\left[ \begin{array}{#1}#2 \\ \end{array} \right]}

\newcommand{\R}{\mathbb{R}}
\newcommand{\C}{\mathbb{C}}

\newcommand{\ve}{\varepsilon}
\newtheorem{theorem}{Theorem}[section]
\newtheorem{prop}[theorem]{Proposition}
\newtheorem{corollary}[theorem]{Corollary}
\newtheorem{lemma}[theorem]{Lemma}
 \theoremstyle{remark}
\newtheorem{remark}{Remark}[section]
\newtheorem{ex}{Example}[section]
 \theoremstyle{definition}
 \newtheorem{definition}{Definition}[section]

\numberwithin{equation}{section}


%

 \renewcommand{\>}{\rangle}
 
%



\newcommand{\CC}{\mathbb{C}} 
  \newcommand{\FF}{\mathbb{F}}
\newcommand{\GG}{\mathbb{G}} \newcommand{\HH}{\mathbb{H}} 
 \newcommand{\KK}{\mathbb{K}} 
\newcommand{\LL}{\mathbb{L}} \newcommand{\MM}{\mathbb{M}}
\newcommand{\NN}{\mathbb{N}}\newcommand{\RR}{\mathbb{R}}
\newcommand{\Sbb}{\mathbb{S}}

\newcommand{\al}{\alpha}

\newcommand{\vphi}{\varphi}
\newcommand{\la}{\lambda}
\newcommand{\de}{\delta}
\newcommand{\vep}{\varepsilon}
\newcommand{\ep}{\epsilon}
\newcommand{\ga}{\gamma}
\newcommand{\Ga}{\Gamma}
\newcommand{\ka}{\ka}
\newcommand{\Om}{\Omega}
\newcommand{\Si}{\Sigma}
\newcommand{\si}{\sigma}
\newcommand{\De}{\Delta}

\newcommand{\vth}{\vartheta}
\newcommand{\om}{\omega}

\newcommand{\Ac}{{\mathcal{A}}}
\newcommand{\Ic}{{\mathcal{I}}}

\newcommand{\Mc}{{\mathcal{M}}}

\newcommand{\Mf}{\mathfrak{M}}

\newcommand{\mf}{\mathfrak{m}}

\newcommand{\Hr}{{\mathrm{H}}}
\newcommand{\nr}{{\mathrm{n}}}

\usepackage{bbm}
\newcommand{\one}{\mathbbm{1}}

\newcommand{\Bbf}{\mathbf{B}}
\newcommand{\Dbf}{\mathbf{D}}
\newcommand{\Ebf}{\mathbf{E}}
\newcommand{\fbf}{\mathbf{f}}

\newcommand{\Hbf}{\mathbf{H}}

\newcommand{\ubf}{\mathbf{u}}
\newcommand{\vbf}{\mathbf{v}}
\newcommand{\wbf}{\mathbf{w}}

\newcommand{\ybf}{\mathbf{y}}
\newcommand{\zbf}{\mathbf{z}}

\newcommand{\ii}{\mathrm{i}}
\newcommand{\ee}{\mathrm{e}}

\newcommand{\wt}{\widetilde}
\newcommand{\wh}{\widehat}
\newcommand{\pa}{\partial}
\newcommand{\uph}{\upharpoonright}

\DeclareMathOperator{\im}{Im}
\DeclareMathOperator{\re}{Re}

\DeclareMathOperator{\ran}{ran}

\DeclareMathOperator{\curl}{curl}
\DeclareMathOperator{\Div}{div}

\DeclareMathOperator{\Tr}{{Tr}}
\newcommand{\sym}{\mathrm{sym}}

\newcommand{\Right}{{\mathrm{right}}}

\newcommand{\imp}{\mathrm{imp}}

\newcommand{\E}{\mathbf{E}}
\renewcommand{\H}{\mathbf{H}}

\newcommand{\imb}{\hookrightarrow}
\newcommand{\<}{\langle}
\newcommand{\sa}{\mathrm{sa}}
\newcommand{\nsa}{\mathrm{nsa}}

\newcommand{\disc}{\mathrm{disc}}
\newcommand{\ess}{\mathrm{ess}}


\renewcommand{\Re}{{\mathrm{Re\;}}}
\renewcommand{\Im}{{\mathrm{Im\;}}}
 \renewcommand{\>}{\rangle}
\DeclareMathOperator{\Dr}{\Ga}
\newcommand{\Mo}{{M}}
\newcommand{\Mmin}{M_{0,0}}
\newcommand{\paOm}{{\pa \Om}}
\newcommand{\LLt}{{L^2_t (\pa \Om)}}

\DeclareMathOperator{\curlm}{\mathbf{curl}}
\DeclareMathOperator{\grad}{\mathbf{grad}}

\newcommand{\Himp}{{H_\imp  (\curlm, \Om)}}
\newcommand{\Le}{{\LL^2_{\ve,\mu} (\Om)}}
\newcommand{\LLOm}{{L^2 (\Om,\CC^3)}}
\newcommand{\Sem}{{\Sbb_{\ve,\mu} (\Omega)}}
\newcommand{\Tan}{\mathrm{tan}}
\newcommand{\Mabsym}{{L^\infty (\Om,\MM^\sym_{\al,\beta})}}
\newcommand{\Mab}{{L^\infty (\Om,\MM_{\al,\beta})}}
\newcommand{\Weak}{{\,\rightharpoonup\,}}

\newcommand{\Hcon}{{\,\xrightarrow{\, \mathrm{H} \, }\,}}
\newcommand{\rhoH}{{\rho_{\mathrm{H}}}}
\newcommand{\wrhoH}{{\wh \rho_{\mathrm{H}}}}

\DeclareMathOperator{\wlim}{\mathrm{w}-\!\lim}

\newcommand{\n}{\mathbf{n}}



\begin{document}
\title{Homogenization and nonselfadjoint spectral optimization for dissipative Maxwell eigenproblems} 
\author{}
\date{}
\maketitle
\thispagestyle{empty}

\vspace{-8ex}
{ \center{ \large
Matthias Eller$^\text{ a}$ and Illya M. Karabash$^\text{ b,*}$\\[3ex]
  }
}  
  
  {\small \noindent
$^{\text{a}}$ Department of Mathematics and Statistics, Georgetown University, Washington, DC 20057, USA \\[1mm]  
$^{\text{b}}$ Institute for Applied Mathematics, University of Bonn, Endenicher Allee 60,
53115 Bonn, Germany\\[0.2ex]
$^{\text{*}}$ Corresponding author: karabash@iam.uni-bonn.de\\[.2ex]
E-mails: mme4@georgetown.edu, karabash@iam.uni-bonn.de
}

\begin{abstract}
The homogenization of eigenvalues of non-Hermitian Maxwell operators is studied by the H-convergence method. It is assumed that the Maxwell systems are equipped with suitable m-dissipative boundary conditions, namely, with Leontovich or generalized impedance boundary conditions of the form $\n   \times \Ebf   = Z  [(\n  \times \Hbf )\times \n ] $. We show that, for a wide class of impedance operators $Z$, the nonzero spectrum of the corresponding Maxwell operator is discrete. To this end, a new continuous embedding theorem for domains of Maxwell operators is obtained. We prove the convergence of eigenvalues to an eigenvalue of a homogenized Maxwell operator under the assumption of the H-convergence of the material tensor-fields. This result is used then to prove the existence of optimizers for eigenvalue optimization problems and the existence of an eigenvalue-free region around zero. As applications, connections with the quantum optics problem of the design of high-Q resonators are discussed, and a new way of the quantification of the unique (and nonunique) continuation property   is suggested.
\end{abstract}

\noindent
MSC-classes: 
78M40  
49R05  
35Q61 
49J20   
47B44 
35J56   
\\[0.5ex]
Keywords: homogenization of eigenvalues, structural optimization, quantitative  unique continuation, G-convergence, optimization of resonances, embedding theorem, m-accretive operator, lossy optical cavity

\tableofcontents

\section{Introduction} \label{s:I}
This paper is aimed to  homogenization, optimization, and unique continuation for (generally non-selfadjoint) eigenproblems associated with the time-harmonic Maxwell system 
  \begin{equation}\label{e:EP}
     \ii  \nabla \times \Hbf (x) = \omega \ve (x) \Ebf (x), \qquad -\ii  \nabla \times \Ebf (x) = \omega \mu (x) \Hbf (x), \qquad x \in \Om, 
    \end{equation}
in a fixed Lipschitz domain $\Om \subset \RR^3$. The eigenfields $\{\Ebf,\Hbf\}$ of \eqref{e:EP}  are assumed to satisfy a fixed m-dissipative boundary condition on the boundary $\pa \Om$ of $\Om$, i.e.,  $\Ebf$ and $\Hbf$ satisfy a boundary condition that together with the  differential operation  $\Mf (\ve,\mu): \{\Ebf,\Hbf\} \mapsto \{\ii \ve^{-1} \nabla\times \Hbf, -\ii \mu^{-1} \nabla \times \Ebf\}$ defines an m-dissipative operator $\Mo$ in a Hilbert space equipped with the suitable  'energy norm'.

 The material parameters  $\ve$ and $\mu $, which are subjected to optimization in Section \ref{ss:OptRes}, are represented by two $L^\infty (\Om,\RR^{3\times 3})$-matrix-functions taking almost everywhere (a.e.) uniformly positive symmetric values.
 The Hilbert space
\begin{align}
& \text{$ \Le$ coincides with the orthogonal sum $L^2 (\Om,\CC^3)^2 = L^2 (\Om,\CC^3) \oplus L^2 (\Om,\CC^3) $}  \label{e:Le0}  
\\
&  \text{as a vector space, but is equipped with the equivalent norm  $ \| \cdot \|_\Le$ (energy norm)}
\label{e:Le1}\\
& \qquad  \qquad \text{  defined by \qquad
$
 \| \{\Ebf,\Hbf\} \|_\Le^2  =  ( \ve \Ebf , \Ebf)_\Om^2 + 
 ( \mu \Hbf, \Hbf)_\Om^2 ,$} \label{e:Le2}
\end{align}  
where $ (\cdot, \cdot)_\Om$ is the standard inner product of the complex Hilbert space $L^2 (\Om,\CC^3)$ of $\CC^3$-vector-fields. 
The other two equations of the Maxwell system,
Gauss's law and the absence of magnetic monopoles, 
can be taken into account by the consideration of the part 
$\Mo |_\Sem$ of the Maxwell operator $\Mo$ in the closed subspace \cite{C96,M03,ACL18}
\begin{equation} \label{e:Sem}
\Sem :=  \{ \{\Ebf,\Hbf\}  \in \Le \ : \ \Div (\ve \Ebf) = 0 \ , \ \Div (\mu \Hbf) = 0\}  .
\end{equation}

Our primary motivation is the Applied Physics problem of design of optical resonators with high
quality-factors $Q = - \frac{|\re \om|}{2 \im \om}$  \cite{LSV03,VS21}
and the associated mathematical problems of modelling \cite{M03,LL04,ACL18,EK22} and optimization \cite{HS86,KS08,HBKW08,K13,OW13,KKV20,EK21} of resonances in open optical, acoustic, or quantum cavities. The key feature of these problems is  the leakage of the energy to the outer medium  modelled by $\RR^3 \setminus \Om$. For the associated nonselfadjoint  Maxwell operators $\Mo$, this means mathematically that $(-\ii)\Mo$ is a  generator of a contraction semigroup $\{\ee^{-\ii \Mo t}\}_{t \ge 0}$ in the energy space $\Le$, or equivalently \cite{P59,Kato,E12}, that $\Mo$ is m-dissipative.

Despite numerous related Physics studies, the spectral optimization for nonselfadjoint differential operators is presently in the initial stage of its mathematical development. Relevant analytic results on the existence of optimizers have been obtained mainly in the cases where the weak-*-compactness of coefficients is applicable \cite{HS86,CZ95,K13,OW13,KKV20}.  For Maxwell systems, this approach is difficult to use since the corresponding homogenization convergences for $\vep (\cdot)$ and $\mu (\cdot)$  are the G-convergence of De Giorgi \& Spagnolo \cite{MT78}  and, equivalently (since $\vep $ and $\mu $ are symmetric), the H-convergence of Murat \& Tartar (see the monographs \cite{D93,C00,A02,T09}).  

Structural optimization problems for  eigenvalues of selfadjoint partial differential operators have been actively studied during  several decades  \cite{ATL89,CL96,H06,EL21,LZ21,GHL21,KCK20}. In particular, the H-convergence approach to the existence of optimizers 
was developed in \cite{CL96,A02}. Concerning the closely connected, but somewhat different theory of shape optimization, which stems from  the Rayleigh–Faber–Krahn inequality, we refer to  \cite{ATL89,A02,H06} and to the recent collection of works \cite{H17}.

An application of the homogenization method to the proof of the existence of optimal eigenvalues of 3-dimensional m-dissipative Maxwell operators would be a natural step forward. However, for the nonselfadjoint case, the H-convergence approach  meets the following difficulties:
\begin{itemize}
\item[(A)] The homogenization method, which was initially developed  for selfadjoint 2nd order elliptic operators with discrete spectrum (see \cite{CL96,A02}),  substantially uses the spectral theorem for selfadjoint operators and the Courant–Fischer–Weyl min-max principle. These  tools are not available (at least directly) in the case of  nonselfadjoint  operators.
 An alternative approach to the convergence of eigenvalues in the case of smooth periodic coefficients was used by Oleinik, Shamaev, and Yosifian \cite{OSY92} for (generally nonselfadjoint) higher order  elliptic operators
 $\frac 1\rho \sum\limits_{|\al|, |\beta| \le m } (-1)^{|\al|} \pa^\al a_{\al\beta} \pa^\beta$ 
acting in the space $L^2 (\Om)$ of scalar-fields. This approach relies on the special `compact convergence' for inverse operators and has its own specific difficulties. Namely, in addition to the compactness of  inverse operators, the assumption that the limiting eigenvalue is semi-simple is needed for the eigenvector convergence estimates. 
Note that the resolvent of  an arbitrary m-dissipative Maxwell operator $M$ in $\Le$ is obviously not compact since $M$ has an infinite dimensional kernel $\ker M = \{\Phi = \{\Ebf,\Hbf\} \in D(M) : M \Phi = 0\}$ (for the basic theory of Maxwell operators see, e.g., \cite{C96,M03,ACL18,EK22} and  Section \ref{ss:Disc}). 
\item[(B)] The question whether the restricted Maxwell operator $\Mo |_\Sem$ has compact resolvent is more involved since the compactness of the resolvent $(T-\om)^{-1}$ for a partial differential operator $T$ depends generally on the associated boundary conditions even in the case of a non-empty resolvent set $\rho (T) = \CC \setminus \si (T)$ \cite{GG91}. This means  that there is no guaranty  that the nonzero spectrum $\si (\Mo) \setminus \{0\}$ for an m-dissipative Maxwell operator $\Mo$ is discrete (i.e., consists of isolated eigenvalues of finite algebraic multiplicity). The description of the family of  boundary conditions such that  corresponding restricted m-dissipative Maxwell operators $\Mo |_\Sem$ have purely discrete spectra $\si (\Mo |_\Sem) = \si_\disc (\Mo |_\Sem) $ seems to be an open problem (cf. \cite{GG91} for the discussion of related questions for Laplacians in bounded domains).
\end{itemize}

The goal of this paper is to overcome the aforementioned difficulties for  Maxwell operators  associated with a sufficiently wide class of m-dissipative boundary conditions.

We first address the difficulty (B) and describe a class of generalized impedance boundary conditions that, for the restricted Maxwell operator  $\Mo |_\Sem$, ensure simultaneously  the discreteness of  spectrum and the  m-dissipativity, see Sections \ref{ss:Disc} and \ref{s:ProofsDisc}. 

The m-dissipative boundary conditions that we consider are generalizations of Leontovich  absorbing boundary conditions. In the Photonics settings  the medium in the outer region $\RR^3 \setminus \Om$ is not homogeneous and usually has a sophisticated structure, which may be partially unknown or random 
\cite{VS21}. Since it is difficult to justify the use of Silver-Müller radiation conditions at $\infty$ in this context, a variety of approximate absorption boundary conditions are employed in computational electromagnetism to model resonances of leaky optical cavities   \cite{LL84,M03,LL04,ACL18,YI18} (see also the discussion in \cite{EK22}). 
In particular, the Leontovich 
boundary  condition 
\begin{gather} \label{e:aBC}
   \n (x) \times \Ebf (x)  = a(x) [(\n (x) \times \Hbf (x))\times \n (x)] , \qquad x \in \pa \Om ,
\end{gather}
which was introduced in Radiophysics publications of Shchukin and Leontovich 
(see \cite{LL84,YI18}) and which is called also the impedance boundary condition, is widely used \cite{M03,LL04,ACL18}.

Here  $\n (\cdot)$ is  the exterior normal unit vector-field  along $\partial\Omega$, and   
the  scalar-valued measurable function $a: \pa \Om \to \overline\CC_\Right : = \{z \in \CC \ : \ \Re z \ge 0\}$ in \eqref{e:aBC} is called an \emph{impedance coefficient}. 
In this paper we are interested in impedance coefficients $a(\cdot)$ 
  satisfying the following set of assumptions:
\begin{align} \label{e:aLinf}
 a \in L^\infty (\pa \Om), 
\quad |a(x)|> c >0 \text{ a.e. } & \text{on $\pa \Om$ for a certain constant $c>0$},
\\
 \text{there exists $\de>0$ such that either } & \text{ $\arg a (x) \in [-\pi/2,\pi/2-\de]$ a.e. on  $ \pa \Om$,} \notag \\
 & \text{\qquad \qquad or $\arg a(x) \in [-\pi/2+\de,\pi/2]$ a.e. on $\pa \Om$,} \label{e:aCo}
\end{align}
where the inequalities and inclusions are valid a.e. with respect to (w.r.t.) the surface measure of $\pa \Om$, and 
$\arg z$ for $z \in \CC \setminus \{0\}$ is the complex argument of $z$, i.e., $\arg z \in (-\pi,\pi]$ and $z = |z| \ee^{\ii \arg z}$.

Following \cite{LL04}, we understand the boundary condition \eqref{e:aBC} in the sense that \linebreak $[(\n  \times \Hbf )\times \n] \in \LLt$,  \quad $\n   \times \Ebf \in \LLt $,  and equality \eqref{e:aBC} holds 
\[
\text{in  the Hilbert space } \quad \LLt = \{ \vbf \in L^2 (\pa \Om,\CC^3) : \ \n \cdot \vbf = 0 \text{ a.e. on } \pa \Om\} \ \text{}
\]
of  tangential $L^2$-vector-fields.
We denote the associated \emph{Maxwell-Leontovich operator} in $\Le$ by $M_{\imp,a}$.  

The operator 
$M_{\imp,a}$ is m-dissipative in $\Le$ if \eqref{e:aLinf}-\eqref{e:aCo} hold true \cite{EK22}
(for the case of uniformly positive $a (\cdot)$, see \cite{LL04}).
Note that, if $a (\cdot)$ vanishes on a set of positive surface measure or is unbounded, the aforementioned interpretation of \eqref{e:aBC} does not necessarily lead to an m-dissipative  operator \cite{EK22} (in the cases of degenerate or singular impedance coefficients $a$, other analytic interpretations of \eqref{e:aBC} are needed, see \cite{M03,ACL18,EK22,PS22} and references therein).

In the case of nonselfadjoint boundary conditions,  the spectral properties of Maxwell operators $M$ and, in particular, the discreteness of the nonzero spectrum $ \si (M) \setminus \{0\}$ are not adequately studied. The pioneering analytic result of S.G.~Krein \& Kulikov \cite{KK69} states that $\si (M) \setminus \{0\} = \si_\disc (M)$ in the particular case where $a \equiv c$ with a constant $c \in \overline\CC_\Right$ and $\pa \Om$ is smooth. The recent paper \cite{PS22} considers the case of mixed boundary condition with a constant impedance coefficient $a \equiv c >0$ for a Leontovich boundary condition on an `absorbing part' $\Ga^1 $ of the boundary $\pa \Om$, while the remaining part of the boundary $\Ga^0 = \pa \Om \setminus \overline{\Ga^1}$ is equipped with the 'perfect metal condition' $\n \times \Ebf = 0$. 
We were not able to find results on the the discreteness of the nonzero spectrum  of Maxwell-Leontovich operators $M_{\imp,a}$ in the case where the impedance coefficient is not a constant function on the absorbing part of the boundary.


The extension to general impedance coefficients $a (\cdot)$ of the Krein-Kulikov proof for the discreteness of $\sigma (M) \setminus \{0\}$ requires not only
 the compact embedding for the space $\Himp^2  \cap \Sem$ into $\Sem$ (see \cite{ACL18,PS22} and \eqref{e:XimpCompImb}),
but also an additional step in the form of
\begin{align} \label{e:ContEmIntro}
& \text{the continuous embedding  $D(\Mo) \imb \Himp^2$ of the domain $D(\Mo)$ of $\Mo$}\\
& \text{into the Hilbert space $\Himp^2 = \Himp \oplus \Himp$,} \notag \\
& \text{where } \quad \Himp = \{ \ubf \in H(\curlm,\Om) \,:\, 
 \n \times \ubf \in \LLt\},  \notag
\end{align}
and where all spaces are equipped with appropriate graph norms (see Sections \ref{ss:MS} and \ref{ss:Disc}).
Without additional assumptions on boundary conditions, \eqref{e:ContEmIntro} generally does not hold,
see Remark \ref{r:DimbHimp}. 

We prove the continuous embedding \eqref{e:ContEmIntro}
and the discreteness $\si (\Mo |_\Sem) = \si_{\disc} (\Mo |_\Sem)$ of the spectrum  for the case of
the generalized impedance boundary conditions 
\begin{gather} \label{e:ZBC}
   \n   \times \Ebf   = Z  [(\n  \times \Hbf )\times \n ] ,
\end{gather}
with \emph{impedance operators} $Z:\LLt \to \LLt$ satisfying the following assumptions: 
\begin{align}
& \text{$Z$ is an accretive bounded operator  in $\LLt$ }  \label{e:Z1} 
\\
  &\text{with the coercivity property \qquad $ \|\vbf\|_\paOm^2 \lesssim |(Z \vbf,\vbf)_\paOm| $, \quad $\vbf\in\LLt$.} \label{e:Z2}
\end{align}
Recall that a linear operator $T$ in a complex Hilbert space $\{X, (\cdot,\cdot)_X\}$  is called \emph{accretive}  if \linebreak $\Re (Tf,f)_X \ge 0$ for all $f \in D(T)$.
By  $(\cdot,\cdot)_{\partial\Omega} $ and $\|\cdot\|_{\partial\Omega}$ the inner product and, respectively (resp.), the norm in the Hilbert space $L^2_t (\partial\Omega)$ are denoted. The notation `$\lesssim$' means that the corresponding inequality is valid after multiplication of the left (or right)  side  on a certain constant $C>0$; i.e.,  \eqref{e:Z2} means $ C \|\vbf \|_\paOm^2 \le |(Z \vbf,\vbf)_\paOm| $ for all $\vbf \in \LLt$. 

Taking the impedance operator $Z$ equal to the operator of multiplication on $a(\cdot)$, one sees
that the class of boundary conditions  \eqref{e:ZBC}-\eqref{e:Z2} includes all Leontovich boundary conditions  \eqref{e:aBC} with $a (\cdot)$ satisfying \eqref{e:aLinf}-\eqref{e:aCo}.
Note that 
the Maxwell operator $M_{\imp,Z}$ defined by \eqref{e:EP}, \eqref{e:ZBC}
is m-dissipative  under assumptions \eqref{e:Z1}-\eqref{e:Z2} (see \cite{EK22} and a simpler proof in Appendix \ref{a:mD}).

The difficulty (A) with the proof of eigenvalue  convergence for nonselfadjoint eigenproblems  is addressed in the present paper for a fixed m-dissipative boundary condition chosen in the class  \eqref{e:Z1}-\eqref{e:Z2}. We obtain the theorem on the convergence of eigenvalues directly (without the use of compact convergence of \cite{OSY92}) combining the properties of the H-convergence with the Helmholtz-Hodge decompositions of $\CC^3$-vector-fields in $\Om$ (see Sections \ref{ss:conv} and \ref{ss:Hodge}). Namely, it is proved in Section \ref{s:iii} that the Murat-Tartar H-convergence \cite{MT78,A02} of material parameters, $\ve_n \Hcon \ve_\infty $ and  $\mu_n \Hcon \mu_\infty$, and the convergence of 
associated eigenvalues $\om_n  \to \om_\infty$ implies that $\om_\infty$ is an eigenvalue of the Maxwell operator $M_{\imp,Z}^{\ve_\infty,\mu_\infty}$ that corresponds to the homogenized material parameters $\ve_\infty$ and $\mu_\infty$ (see Theorem \ref{t:c}). 

As a by-product of this result, we  prove that, for every uniformly positive definite and uniformly bounded family of material parameters $\ve$ and $\mu$, there exists a region $\{ z \in \CC : 0 <|z| < R\}$ that is free of eigenvalues (see Theorem \ref{t:EigFree}).
These results are applied in Sections \ref{ss:OptRes}, \ref{ss:uc}, and \ref{s:discu} to
various eigenvalue optimization problems for Maxwell operators.

One of the nonselfadjoint spectral optimization problems arising in optical engineering and applied physics  is 
the Q-factor maximization for photonic crystal microcavities \cite{LSV03,VS21}, which was partially inspired by the Haroche design of `Schrödinger photonic-cat' experiments  \cite{H13}. 
We consider for this problem a new  optimization formulation, 
which is different, but somewhat related, to the formulations for the minimization of the eigenoscillation (exponential) decay rate $\Dr (\om) = - \Im \om$ that were  proposed before in \cite{HS86,K13,OW13,KKV20} for other types of spectral problems.

Namely, the problem considered in Section \ref{ss:OptRes} is to create a complex eigenvalue $\om$ as close as possible to a specific range $\Ic = [\vphi_-,\vphi_+] \subset \RR$ of real frequencies assuming that
the material parameter pair $\{\ve,\mu\}$ varies in a certain family of composite structures feasible for the fabrication, or in the corresponding relaxed H-closed feasible family.
With the use of Theorems \ref{t:c} and \ref{t:EigFree}, we apply the homogenization method to the question of the existence of optimizers.

The connections between the unique continuation for Maxwell system \cite{EY06,D12,NW12,BCT12} and the optimization of eigenvalues are considered in Sections \ref{ss:uc} and \ref{s:discu}. We would like to note that from the point of view of the  Q-factor maximization, the example of nonunique continuation of \cite{D12} leads to a cavity with the quality-factor $Q=\infty$. 
This naturally raises the question whether the continuous anisotropic coefficients $\ve$ and $\mu$  considered in \cite{D12} belong to the set of H-limits of piecewise constant structures consisting of several materials available for fabrication. This question is connected with the coupled G-closure problem that is discussed in the beginning of Section \ref{s:discu}.

In Appendix, proofs of several important results, which we use as technical tools, are given. These results  may be already known to specialists, but either complete proofs are difficult to find in the publications, or substantially simpler proofs can be given in the specific settings that we use.

\textbf{Notation.} 
Let $I_{\RR^3}$ denote the identity $3\times3$-matrix. We put $\RR_\pm =\{t \in \RR \ : \ \pm t >0 \}$.  By  $\<\cdot,\cdot \> $ we denote the standard inner product in $\CC^3$ or $\RR^3$.
For the functional spaces and basic settings related to Maxwell systems we refer to  \cite{ACL18,EK22,M03} and Sections \ref{ss:MS}-\ref{ss:Disc}; for the  notions of spectral theory related to dissipative and accretive operators to \cite{E12,Kato,RSIV,SFBK10} and Section \ref{ss:SpTh}. 
By $(\cdot,\cdot)_X$ we denote the inner product in a Hilbert space $X$; the notation $(\cdot,\cdot)_\Om$ is the shortening for $(\cdot,\cdot)_{L^2 (\Om,\CC^3)}$, while the notation  
$(\cdot,\cdot)_\paOm$ is the shortening for $(\cdot,\cdot)_\LLt$. By 
 $\<\cdot,\cdot\>_\Om$ we denote the 
sesquilinear  pairing between $H^{-1}(\Omega)$ and $H^1_0(\Omega)$ w.r.t. the pivot space $L^2 (\Omega)$. The sign $\oplus$ stands for an orthogonal sum of Hilbert spaces. 
Let $T:D(T) \subseteq X \to X$ be a (linear) operator in $X$ with a domain $D(T)$. 
The \emph{range} of $T$ is denoted by $\ran T := \{T h : \ h \in D(T)\}$. 
For Banach spaces $X_1$ and $X_2$, we denote by $X_1 \imb X_2$  continuous embeddings,
and by $X_1 \imb \imb X_2$ compact embeddings.


\section{Main results and settings of the paper}\label{s:MR}

\subsection{Spaces and trace maps associated with Maxwell operators}
\label{ss:MS}

It is assumed always that $\Om \subset \RR^3$  is a Lipschitz domain.  This means that $\Om $ is a nonempty bounded open connected set with a Lipschitz continuous boundary $\pa \Om$, see, e.g., \cite{M03,ACL18}.   Let $\RR^{3\times3}$ be the real Banach space of $3\times3$ real-valued matrices (the choice of a norm in $\RR^{3\times3}$ is not important). By $\RR_\sym^{3\times3}$ we denote the linear subspace of $\RR^{3\times3}$ consisting of symmetric matrices.

Let $\al,\beta>0$ be constants such that $0 < \al \le \beta$. Modifying the notation of \cite{A02}, we denote by $\MM_{\al,\beta}$ the set of invertible real matrices $\Ac \in \RR^{3\times3} $ satisfying the following two coercivity conditions 
\begin{equation} \label{e:Mab}
\text{$\al |\ybf|^2  \le \< \Ac \ybf, \ybf\> $ and $\beta^{-1} |\ybf|^2  \le \<  \Ac^{-1} \ybf, \ybf\> $ for all $\ybf \in \RR^3$,}
\end{equation}
where $\<\ybf,\zbf\> = \sum_{j=1}^3 y_j \overline{z_j}$ for $\ybf,\zbf \in \CC^3$.
Let $\MM^\sym_{\al,\beta} $ be the set of symmetric matrices satisfying \eqref{e:Mab},  
\[
\MM^\sym_{\al,\beta}  = \MM_{\al,\beta} \cap \RR_\sym^{3\times3}  = \{ \Ac \in \RR^{3\times3}  \ : \ \al |y|^2  \le \< \Ac y, y\> \le \beta |y|^2 \text{  for all $y \in \RR^3$}\} .
\]
Since $\Ac \in \MM_{\al,\beta}$ implies $|\Ac \ybf |\le \beta^{-1} |\ybf|$ for all $\ybf \in \RR^3$,
the sets $\MM_{\al,\beta}$ and $\MM^\sym_{\al,\beta}$ are bounded in $\RR^{3\times3}$. 
Let $ L^\infty \bigl(\Om,\MM^{(\sym)}_{\al,\beta} \bigr)  = \{ A \in L^{\infty} (\Om,\RR^{3\times3}) \ : \   A(x) \in \MM^{(\sym)}_{\al,\beta}  \text{ for  a.a. } x \in \Om\}  $.
It will be assumed always that the material parameters  $\ve$ (dielectric permittivity) and $\mu$ (magnetic permeability) satisfy 
$  \ve, \mu \in \Mabsym $ with certain $\al$ and $\beta$ such that $0 < \al \le \beta$.

We refer to \cite{M03,ACL18,EK22} for the main functional spaces associated with Maxwell operators. 
The space $ 
H (\curlm,\Om) := \{ u \in \LLOm \ : \ \nabla \times u \in \LLOm \} 
$
and other spaces built as domains of operators are assumed to be equipped with the graph norms. 
Here $H (\curlm,\Om)$ is the domain (of definition) of 
the unbounded operator 
$\curlm : \ubf \mapsto  \nabla \times \ubf$  in the Hilbert space $\LLOm$,
 where $\nabla \times \ubf $ is understood in the distribution sense. Since $\curlm$ is a closed operator, $H (\curlm,\Om)$ is a Hilbert space. The Hilbert space $
H_0 (\curlm, \Om) 
$
is defined as  the closure in $
H (\curlm,\Om) $ of the space $C^\infty_0 (\Om, \CC^3)$ of compactly supported in $\Om$ smooth  $\CC^3$-vector-fields.

The space  $ \Himp = \{ \ubf \in H(\curlm,\Om) \,:\, 
 \n \times \ubf \in \LLt\}
$ is a Hilbert space,  see the proof of \cite[Theorem 4.1]{M03}.
This space provides one of the ways to give a rigorous interpretation to Leontovich boundary conditions \cite{M03,LL04, EK22} and can be equivalently written as 
\[
 \Himp  = \{ \ubf \in H(\curlm,\Om) \,:\, 
 \ubf_\Tan \in \LLt\} , \qquad \text{ where $\ubf_\Tan:=  (\n\times \ubf) \times \n$.}
 \]
The \emph{tangential component  $\ubf_\Tan = (\n\times \ubf) \times \n = -\n \times \n \times (\ubf|_{\pa \Om})$ of the trace $\ubf|_{\pa \Om}$} and the \emph{tangential trace}
$\n \times \ubf = \n  \times (\ubf |_{\pa \Om}) $ for $\ubf \in H (\curl,\Om)$  are understood  in a  generalized sense, see \cite{C96,BCS02,M03,M04,ACL18,EK22}.
 Namely, in the case of a Lipschitz domains $\Om$, one has 
$\n \in L^\infty (\pa \Om, \RR^3)$ for the \emph{unit outward normal vector-field} $\n:\pa \Om \to \RR^3$.
The operator $\n^\times : \ubf \mapsto \n  \times (\ubf |_{\pa \Om})$ and the operator of the tangential projection trace $\pi_\top : \ubf \mapsto \ubf_\Tan $ first defined for $C^\infty (\overline{\Om};\CC^3)$-fields on the closure $\overline{\Om}$ of $\Om$  have unique extensions as continuous (but not surjective)  operators from $H (\curlm, \Om)$ to $H^{-1/2} (\pa \Om,\CC^3)$.
The images of these operators 
\[
\text{$\n^\times H (\curlm, \Om) = H^{-1/2} (\Div_{\pa \Om}, \pa \Om)$
and $\pi_\top H (\curlm, \Om) = H^{-1/2} (\curl_{\pa \Om}, \pa \Om)  $}
\]
can be equivalently  defined
as domains of special extensions of the surface divergence $\Div_{\pa \Om}$ and the surface scalar curl-operator $\curl_{\pa \Om}$ \cite{C96,BCS02,BHPS03,M04} (see also the comments in  \cite{M03,ACL18,EK22}). The role of the spaces $H^{-1/2} (\Div_{\pa \Om}, \pa \Om)$
and $H^{-1/2} (\curl_{\pa \Om}, \pa \Om)  $ is that they are the two trace spaces associated with the integration by parts for the operator $\curlm$.

 The (graph) norm $\| \cdot \|^2_\Himp $ of $\Himp$ is defined via the equalities 
\begin{gather} \label{e:NormHimp}
 \|\ubf\|^2_\Himp = 
 \|\ubf\|^2_{H (\curlm,\Om)}  +\|\n \times \ubf\|^2_\paOm =\|\ubf\|^2_{H (\curlm,\Om)}  +\|\ubf_\Tan\|^2_\paOm ,
\end{gather}
where $\| \cdot \|_\paOm$ denotes the norm in the Hilbert space $L^2_t (\pa \Om)$.
Note that in \eqref{e:NormHimp} we used the following elementary fact: 
the map  $\vbf \mapsto \n \times \vbf$ considered as a linear operator in $\LLt$ is a unitary operator.

\subsection{M-dissipative Maxwell operators with discrete spectra}\label{ss:Disc}

A linear operator $T:D(T) \subseteq X \to X$ in a complex Hilbert space $X$  
 is called \emph{accretive}  if $\Re (Tf,f)_X \ge 0$ for all  $f \in D(T)$ \cite{Kato}, and is called \emph{dissipative} if $\Im (Tf,f)_X \le 0$ for all $f \in D(T)$ (we use here the mathematical physics convention for dissipative operators \cite{E12}).
  A dissipative operator $T$ in a Hilbert space $X$ is called \emph{m-dissipative} if $\CC_+ := \{\om \in \CC \, : \, \Im \om>0\}$ is a subset of its resolvent set $\rho (T)$ and $\|(T-\om)^{-1}\| \le \frac 1{\Im \om}$ for all $\om \in \CC_+$.

In this paper it is assumed that the operator $Z:\LLt \to \LLt$ satisfies conditions \eqref{e:Z1}-\eqref{e:Z2}, i.e., $Z$ is a bounded accretive operator in the Hilbert space $\LLt$ generating a coercive sesquilinear form (for discussions of boundary conditions with more general impedance operators, see \cite{ACL18,EK22}).
We work with generalized impedance boundary conditions \eqref{e:ZBC}, which now, using  the the notation of Section \ref{ss:MS}, can be written shorter  as
$   \n   \times \Ebf   = Z  \Hbf_\Tan $. 

The Maxwell operator $M_{\imp,Z} = M_{\imp,Z}^{\ve,\mu}$ associated with eigenproblem \eqref{e:EP} and the boundary condition $   \n   \times \Ebf   = Z  \Hbf_\Tan $ is m-dissipative \cite{EK22}. Let us give the rigorous definition of $M_{\imp,Z}$.
In the energy space $\Le$ defined by  \eqref{e:Le0}-\eqref{e:Le2}, we consider the symmetric  Maxwell operator $\Mmin$  
defined by the differential expression
 $
 \Mf (\ve,\mu) : \left[ \begin{matrix}\Ebf \\ \Hbf \end{matrix} \right] \mapsto  
 \left[ \begin{matrix} \ \ \ii \ve^{-1} \nabla\times \Hbf 
 \\ - \ii \mu^{-1} \nabla\times \Ebf \end{matrix} \right]
$
on the domain 
\[
\text{$D(M_{0,0}) = H_0 (\curlm,\Omega)^2 $, \quad
where $ H_0 (\curlm,\Omega)^2 =H_0 (\curlm,\Omega) \oplus H_0(\curlm,\Omega)$.}
\]
The adjoint (in $\Le$) operator $\Mmin^*$ 
has the same differential expression $\Mf (\ve,\mu)$, but a different domain
$ D(\Mmin^*) = H (\curlm,\Om)^2$, which is wider and is the maximal natural domain 
for this differential expression in $\Le$.

The operator $M_{\imp,Z} = M_{\imp,Z}^{\ve,\mu}$  is defined as the restriction 
$ M_{\imp,Z} := \Mmin^* \!\uph_{D(M_{\imp,Z})}  $ of the operator $\Mmin^*$
 to $D(M_{\imp,Z}) := \{ \{\Ebf,\Hbf\} \in \Himp^2  :  \n \times \Ebf   = Z  \Hbf_\Tan  \}$.
The notation $M_{\imp,Z}^{\ve,\mu}$ (and the notation $M_{0,0}^{\ve,\mu}$ for the operator $M_{0,0}$) will be used in the cases, where it is important to emphasize the dependence on the material parameters  $\ve $ and $\mu $.

\begin{ex}[Maxwell-Leontovich operators]\label{e:LMop}
The Maxwell-Leontovich operator $ M_{\imp,a} $ defined in Section \ref{s:I} can be written as   $M_{\imp,Z} $ with $Z=\mf_a$, where $\mf_a : \vbf \mapsto a \vbf$ is the operator of multiplication on $a(\cdot)$ in $\LLt$. In this case, the assumptions \eqref{e:aLinf}-\eqref{e:aCo} on the impedance coefficient $a(\cdot)$ are equivalent to  the 
assumptions \eqref{e:Z1}-\eqref{e:Z2} (this follows, e.g.,  from  \cite[Remark 4.2.7]{ACL18}).
\end{ex}

\begin{remark} \label{r:MZmDis}
Since  $M_{\imp,Z} $ is m-dissipative in $\Le$, the Maxwell-Leontovich operator $ M_{\imp,a} $ is m-dissipative if assumptions \eqref{e:aLinf}-\eqref{e:aCo} are satisfied. This result was obtained in \cite{EK22} as a by-product of a description of all time-independent linear m-dissipative boundary conditions in terms of  m-boundary tuples. We give in Appendix \ref{a:mD} an elementary proof of the m-dissipativity of $M_{\imp,Z} $, which, in the style of \cite{LL04}, is based on the Lax–Milgram lemma.
\end{remark}

The domain $D(M_{\imp,Z})$ of $M_{\imp,Z}$ is equipped with the graph norm for $M_{\imp,Z}$ in  $\Le$, i.e.,
\[
\|\{\Ebf,\Hbf\}\|_{D(M_{\imp,Z})}^2 = (\ve \Ebf,\Ebf)_\Om+(\mu \Hbf,\Hbf)_{\Om}  + \|\nabla \times \Ebf\|_\Om^2+\|\nabla \times \Hbf\|_\Om^2.
\]
Note that this norm is generated by the graph norm for $\Mmin^*$ and is independent of  $Z$.
Recall that 
\[
 \|\{\E,\H\}\|_{H (\curl,\Om)^2}^2 =  \|\E\|_{H (\curl,\Om)}^2 + \|\H\|_{H (\curl,\Om) }^2 =
\| \E\|_\Om^2+\| \H\|_\Om^2  + \|\nabla \times \E\|_\Om^2+\|\nabla \times \H\|_\Om^2.
\]
Since $\ve,\mu \in \Mabsym$,
\begin{equation} \label{e:NDeqHcurl2}
\text{the norms $\|\cdot\|_{D(M_{\imp,Z})}$ and $\| \cdot \|_{H (\curl,\Om)^2}$ are equivalent in the space $D(M_{\imp,Z})$.}
\end{equation}
The definition \eqref{e:NormHimp} of $\| \cdot \|_{\Himp}$ implies
\begin{gather} \label{e:NHimp2}
\|\{\E,\H\}\|_{\Himp^2}^2 = \|\E\|_{H (\curl,\Om)}^2 + \|\H\|_{H (\curl,\Om) }^2 + \| \E_\Tan \|_\paOm^2 + \| \H_\Tan \|_\paOm^2  \ .
\end{gather}

\begin{theorem}\label{t:b}
Suppose \eqref{e:Z1}-\eqref{e:Z2}. Then:
\item[(i)]  
$\quad  \|\E_\Tan\|^2_{\partial\Omega} +
  \|\H_{\Tan}\|^2_{\partial \Omega} \lesssim 
  \|\E\|^2_{H(\curl,\Omega)} + 
  \|\H\|^2_{H(\curl,\Omega)} \ $ \quad for all $\{\E,\H\} \in D(M_{\imp,Z})$.
\item[(ii)] The following continuous embedding holds 
\begin{equation} \label{e:DimbHimp}
D( M_{\imp,Z}) \ \imb \ \Himp^2  \ .
\end{equation} 
\end{theorem}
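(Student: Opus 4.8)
We first establish (i); statement (ii) is then immediate. Granting (i), the norm identity \eqref{e:NHimp2} gives, for every $\{\E,\H\}\in D(M_{\imp,Z})$,
\[
\|\{\E,\H\}\|_{\Himp^2}^2
=\|\E\|_{H(\curl,\Om)}^2+\|\H\|_{H(\curl,\Om)}^2+\|\E_\Tan\|_\paOm^2+\|\H_\Tan\|_\paOm^2
\lesssim \|\E\|_{H(\curl,\Om)}^2+\|\H\|_{H(\curl,\Om)}^2 ,
\]
and the last quantity is $\|\{\E,\H\}\|_{H(\curl,\Om)^2}^2$, which by \eqref{e:NDeqHcurl2} is comparable to $\|\{\E,\H\}\|_{D(M_{\imp,Z})}^2$. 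Since $D(M_{\imp,Z})\subseteq\Himp^2$ by the definition of the domain, this is precisely the continuous embedding \eqref{e:DimbHimp}. So the whole matter reduces to the trace bound (i).

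To prove (i), fix $\{\E,\H\}\in D(M_{\imp,Z})$; by the definition of $D(M_{\imp,Z})$ we have $\E,\H\in\Himp$, hence $\E_\Tan,\H_\Tan$ and the tangential traces $\n\times\E,\n\times\H$ all belong to $\LLt$. For a pair of fields in $\Himp$ the generalized integration-by-parts formula for $\nabla\times$ is an honest $\LLt$-identity --- the duality pairing of $H^{-1/2}(\Div_\paOm,\paOm)$ with $H^{-1/2}(\curl_\paOm,\paOm)$ reduces to the inner product of $\LLt$ as soon as both traces lie in $\LLt$, see \cite{M03,ACL18} --- namely
\[
(\nabla\times\E,\H)_\Om-(\E,\nabla\times\H)_\Om=(\n\times\E,\H_\Tan)_\paOm .
\]
Substituting the boundary relation $\n\times\E=Z\H_\Tan$ and passing to moduli, Cauchy--Schwarz in $L^2(\Om,\CC^3)$ followed by Young's inequality yields
\[
|(Z\H_\Tan,\H_\Tan)_\paOm|=|(\nabla\times\E,\H)_\Om-(\E,\nabla\times\H)_\Om|\le\tfrac12\bigl(\|\E\|_{H(\curl,\Om)}^2+\|\H\|_{H(\curl,\Om)}^2\bigr),
\]
and the coercivity hypothesis \eqref{e:Z2}, applied with $\vbf=\H_\Tan$, turns the left-hand side into a norm: $\|\H_\Tan\|_\paOm^2\lesssim|(Z\H_\Tan,\H_\Tan)_\paOm|\lesssim\|\E\|_{H(\curl,\Om)}^2+\|\H\|_{H(\curl,\Om)}^2$.

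To finish, I control $\E_\Tan$ using that $\vbf\mapsto\n\times\vbf$ is unitary on $\LLt$ together with the boundedness of $Z$ (part of \eqref{e:Z1}): $\|\E_\Tan\|_\paOm=\|\n\times\E\|_\paOm=\|Z\H_\Tan\|_\paOm\le\|Z\|\,\|\H_\Tan\|_\paOm$, and combining this with the bound for $\|\H_\Tan\|_\paOm$ gives (i). I expect the only genuinely delicate point to be the use of the $\LLt$-valued Green formula on the class $\Himp$: this is exactly where the hypothesis $\{\E,\H\}\in\Himp^2$, rather than merely $\{\E,\H\}\in H(\curl,\Om)^2$, enters (and where, without a boundary-condition restriction, the embedding fails, cf. Remark~\ref{r:DimbHimp}), and the reduction of the abstract duality pairing to a true $L^2$ inner product must be taken from the trace theory for $H(\curl,\Om)$ on Lipschitz domains rather than reproved here. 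Everything else is elementary: \eqref{e:Z2} converts an a priori indefinite boundary quadratic form into a norm, and boundedness of $Z$ passes the control from $\H_\Tan$ to $\E_\Tan$.
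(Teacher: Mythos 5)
Your proof of (i) is correct, but it is not the route the paper takes — it is in fact considerably shorter. The paper never invokes the Green identity on $\Om$: it decomposes $\H_\Tan$ and $\n\times\E$ via the boundary Hodge decompositions \eqref{e:HDL22}--\eqref{e:HDdiv}, observes that all components of $\H_\Tan$ except the gradient part $\grad_\paOm p_1$ are already controlled by $\|\H\|_{H(\curlm,\Om)}$ through the $H^{-1/2}(\curl_{\pa\Om},\pa\Om)$ trace estimate, and then recovers the missing bound $\|p_1\|_{H^1_\paOm}$ from the boundary condition by a Lax--Milgram argument for the coercive form $(Z\grad_\paOm\cdot,\grad_\paOm\cdot)_\paOm$ on $H^1_\paOm$. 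Your argument replaces all of this by the single identity $(Z\H_\Tan,\H_\Tan)_\paOm=(\nabla\times\E,\H)_\Om-(\E,\nabla\times\H)_\Om$ followed by Cauchy--Schwarz and \eqref{e:Z2}. The one point you rightly flag as delicate --- that for $\{\E,\H\}\in\Himp^2$ the abstract $H^{-1/2}(\Div_{\pa\Om},\pa\Om)$--$H^{-1/2}(\curl_{\pa\Om},\pa\Om)$ pairing in the Green formula coincides with the $\LLt$ inner product --- is exactly the fact the paper itself uses in Appendix \ref{a:mD} (the dissipativity computation $2\Im(M_{\imp,Z}\Phi,\Phi)_\Le=-2\Re(\n\times\E,\H_\Tan)_\paOm$ for $\Phi\in D(M_{\imp,Z})$), so your proof stays within the paper's own toolbox; it can be justified by density of smooth fields in $\Himp$ or by matching the $\LLt$ and $H^{-1/2}$ Hodge decompositions componentwise. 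What the paper's longer route buys is that it sidesteps this compatibility question entirely (it only ever uses continuity of the trace maps into the $H^{-1/2}$ spaces), and it isolates precisely which Hodge component of $\H_\Tan$ fails to be controlled without the boundary condition, which makes the counterexample of Remark \ref{r:DimbHimp} transparent and would adapt to impedance operators that are coercive only on the gradient part. Your reduction of (ii) to (i) via \eqref{e:NDeqHcurl2} and \eqref{e:NHimp2}, and the passage from $\H_\Tan$ to $\E_\Tan$ using the boundedness of $Z$ and the unitarity of $\vbf\mapsto\n\times\vbf$ on $\LLt$, coincide with the paper's Step 1.
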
 

The proof of  this theorem is given in Section \ref{s:ProofsDisc}
(note that statement (ii) follows immediately from statement  (i) combined with  \eqref{e:NDeqHcurl2} and \eqref{e:NHimp2}). 
The role of Theorem \ref{t:b} is that \eqref{e:DimbHimp} is one of the two components of the proof of the discreteness of the nonzero spectrum $\si (M_{\imp,Z}) \setminus \{0\}$ of the operator $M_{\imp,Z}$, see Theorem \ref{t:DiscSp} and Section \ref{s:ProofsDisc}.

Example \ref{e:LMop} implies that, for the case of Maxwell-Leontovich operators $M_{\imp,a}$, the estimate (i) of Theorem \ref{t:b}  is valid for $\{\Ebf,\Hbf\} \in D(M_{\imp,a})$ under assumptions \eqref{e:aLinf}-\eqref{e:aCo}.

\begin{remark}\label{r:DimbHimp}
 While the impedance operator $Z$ does not influence the norm in 
$D (M_{\imp,Z})$, it influences the choice of the closed subspace $D (M_{\imp,Z})$ of 
$H (\curl,\Om)^2$. It is easy to see that statements (i) and (ii) of Theorem \ref{t:b} are not valid 
if $D (M_{\imp,Z})$ is replaced by the whole $H (\curl,\Om)^2$, and so, Theorem \ref{t:b} is  generally not valid without assumptions  \eqref{e:Z1}-\eqref{e:Z2}. A simple counterexample is provided by the 'extremal  Maxwell-Leontovich operator' $M_{\imp,0}$ with the identically zero impedance coefficient $a \equiv 0$. In this case the multiplication operator $\mf_0 $ of Example \ref{e:LMop} is the zero operator in $\LLt$,
and $D(M_{\imp,0}) = H_0 (\curlm,\Om) \times \Himp $. The estimate (i) of Theorem \ref{t:b} and,  particularly, the estimate $ \|\Hbf_{\Tan}\|^2_{\partial \Omega} \lesssim \|\Hbf\|^2_{H(\curl,\Omega)} $ 
  are not valid on $D(M_{\imp,0})$
since the Hilbert space $\pi_\top H (\curlm,\Om) = H^{-1/2} (\curl_{\pa \Om}, \pa \Om)  $ is not embedded into $\LLt$ (see \cite{C96,BCS02,EK22}). 
\end{remark}

For a matrix-function $\xi  \in \Mabsym $ (later $\xi  = \ve $ or $\xi = \mu$), the 'weighted' complex  Hilbert space
$L^2_\xi (\Om,\CC^3)$  by definition  coincides with $L^2 (\Om,\CC^3)$ as a linear space, but is equipped with the `weighted' inner product 
$ (\ubf,\wbf)_{L^2_\xi (\Om,\CC^3)} := (\xi \ubf,\wbf)_\Om $. 
The norms in $L^2_\xi (\Om,\CC^3)$ and $L^2 (\Om,\CC^3)$ are equivalent (due to \eqref{e:Mab}).
The space of $\CC$-valued functions $H^1_0 (\Om) := \{ f \in H^1 (\Om) \, : \, f |_\paOm = 0 \}$ is a closed subspace of the standard complex Hilbertian Sobolev space $H^1 (\Om)$.

We use the (weighted)  \emph{orthogonal  Helmholtz--Weyl decomposition} \cite{GR86,C96,M03,ACL18}
\begin{gather}  \label{e:HWxi}
L_\xi^2 (\Omega,\CC^3) \ = \  H(\Div \xi 0,\Omega) \ \oplus \ \grad H^1_0 (\Omega)
, \\
\text{ where  } 
 H(\Div \xi 0,\Om) =\{ \ubf \in L_\xi^2(\Om,\CC^3)  \;:\; \text{$\nabla\cdot (\xi \ubf)=0$ in $\Om$ in the sense of distributions}\}
\label{e:HdivXi0}
\end{gather}
and 
$\grad H^1_0 (\Om) = \{\grad \psi : \psi \in H^1_0 (\Om)\}$ are  closed subspaces of $ L_\xi^2 (\Omega,\CC^3)$
(see more details in Section \ref{ss:SpTh} and Appendix \ref{a:HdecOm}).
Since $\Le = L_\ve^2 (\Omega,\CC^3)  \oplus L_\mu^2 (\Omega,\CC^3) $,
the Helmholtz-Weyl decompositions of  $L_{\ve (\mu)}^2 (\Omega,\CC^3) $  lead to the orthogonal decomposition of the energy space:
\begin{gather*}
 \Le = \Sem \oplus \GG_{0} (\Om), \quad \text{ where $\GG_{0} (\Om) = \grad H^1_0 (\Omega) \oplus \grad H^1_0 (\Omega) $.}
\end{gather*} 
The 1st space $\grad H^1_0 (\Omega)$ in the orthogonal decomposition of $ \GG_{0} (\Om)$ has to be understood as a closed subspace of 
$L_\ve^2 (\Omega,\CC^3)$ (equipped with the norm $\| \cdot \|_{L_\ve^2 (\Omega,\CC^3)}$) and the 2nd space $\grad H^1_0 (\Omega)$ analogously as a closed subspace of $L_\mu^2 (\Omega,\CC^3)$. The space $\Sem$ is defined by \eqref{e:Sem}.

The orthogonal decomposition 
$\Le = \Sem \oplus \GG_{0} (\Om)$ reduces  the m-dissipative operator $M_{\imp,Z}$ to the orthogonal sum 
$ \quad
M_{\imp,Z} = M_{\imp,Z} |_\Sem \oplus 0 \quad \text{ (see \cite{EK22})},
\quad 
$
where 
the part $M_{\imp,Z} |_{\GG_{0} (\Om)} $ of $M_{\imp,Z}$ in the reducing subspace $\GG_{0} (\Om)$
is the zero operator in $\GG_{0} (\Om)$ (the definition of reducing subspaces is given in Section \ref{ss:SpTh}). 
Moreover, since the operator $M_{\imp,Z}$ is m-dissipative in $\Le$, its part $M_{\imp,Z} |_\Sem$ in the reducing subspace $\Sem$ is an m-dissipative operator in $\Sem$ (see  \cite[Remark 2.2]{EK22} for the details).


Note that, for any $\ve,\mu \in \Mabsym$ and any impedance operator $Z$, the infinite-dimensional space $\GG_{0} (\Om)$ is a subspace of the kernel $\ker M_{\imp,Z} = \{ \Phi \in D(M_{\imp,Z})  : \ M_{\imp,Z} \Phi = 0 \}$. So,  the eigenvalue $\om =0$ of the operator $M_{\imp,Z}$ has infinite geometric and algebraic multiplicities (in the sense of \cite{Kato})
and $\om = 0$ is a point of the essential spectrum $\si_\ess (M_{\imp,Z}) = \si (M_{\imp,Z}) \setminus \si_\disc (M_{\imp,Z})$ of $M_{\imp,Z}$
(in the sense of \cite{RSIV}).

The basic definitions of spectral theory including definitions of operators with compact resolvent and the discrete spectrum $\si_\disc (T)$ are collected below in Section \ref{ss:SpTh}.
The following theorem on discrete spectra of Maxwell operators $M_{\imp,Z}$  is the main result of this subsection. 

\begin{theorem}[discreteness of spectra] \label{t:DiscSp}
Let an impedance operator $Z$ satisfy \eqref{e:Z1}-\eqref{e:Z2}. Then:
\item[(i)] The restricted Maxwell operator $M_{\imp,Z} |_{\Sem} $ is an m-dissipative operator with compact resolvent 
and purely discrete spectrum.
\item[(ii)] 
The spectrum of $M_{\imp,Z}$ consists of isolated eigenvalues. An eigenvalue $\om$ of $M_{\imp,Z}$ has a finite algebraic multiplicity if and only if $\om \neq 0$. 
\end{theorem}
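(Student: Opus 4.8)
The strategy is to combine the continuous embedding of Theorem~\ref{t:b} with the known compact embedding $\Himp^2 \cap \Sem \imb\imb \Sem$ and then to bootstrap from the restricted operator to the full operator. First I would establish statement (i). Since $M_{\imp,Z}$ is m-dissipative in $\Le$, the lower half-plane $\CC_-$ (and in particular some point $\om_0$ with $\im\om_0<0$) lies in $\rho(M_{\imp,Z})$, hence $\om_0\in\rho(M_{\imp,Z}|_\Sem)$ because $\Sem$ is a reducing subspace on which the part is again m-dissipative. Thus $(M_{\imp,Z}|_\Sem-\om_0)^{-1}$ is a bounded operator on $\Sem$. To see it is compact, take a bounded sequence $\{\Phi_n\}\subset\Sem$ and set $\Psi_n=(M_{\imp,Z}|_\Sem-\om_0)^{-1}\Phi_n$. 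Then $\{\Psi_n\}$ is bounded in the graph norm of $M_{\imp,Z}$, hence bounded in $D(M_{\imp,Z})$; by Theorem~\ref{t:b}(ii) it is bounded in $\Himp^2$, and since $\Psi_n\in\Sem$ it is bounded in $\Himp^2\cap\Sem$. The compact embedding $\Himp^2\cap\Sem\imb\imb\Sem$ (see \cite{ACL18,PS22} and \eqref{e:XimpCompImb}) then yields a subsequence converging in $\Sem=\Le|_\Sem$, proving compactness of the resolvent. An m-dissipative operator with compact resolvent has spectrum consisting solely of isolated eigenvalues of finite algebraic multiplicity, i.e.\ $\si(M_{\imp,Z}|_\Sem)=\si_\disc(M_{\imp,Z}|_\Sem)$.

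For statement (ii), I would use the orthogonal decomposition $M_{\imp,Z}=M_{\imp,Z}|_\Sem\oplus 0_{\GG_0(\Om)}$ established above. Consequently $\si(M_{\imp,Z})=\si(M_{\imp,Z}|_\Sem)\cup\{0\}$ and, away from $0$, the resolvent of $M_{\imp,Z}$ agrees (up to the trivial summand) with that of $M_{\imp,Z}|_\Sem$, which is compact; hence every nonzero point of the spectrum is an isolated eigenvalue of finite algebraic multiplicity, with the same Riesz projection as for the restricted operator. The point $\om=0$ is an eigenvalue since $\GG_0(\Om)\subseteq\ker M_{\imp,Z}$, and it has infinite algebraic multiplicity because $\GG_0(\Om)$ is infinite-dimensional. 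It remains only to check that $0$ is an \emph{isolated} point of $\si(M_{\imp,Z})$: this follows because $0$ is either in $\rho(M_{\imp,Z}|_\Sem)$ or an isolated eigenvalue of it, and in either case there is a punctured neighbourhood of $0$ disjoint from $\si(M_{\imp,Z}|_\Sem)$, hence from $\si(M_{\imp,Z})$. Finally, since $\si(M_{\imp,Z}|_\Sem)$ consists of eigenvalues of finite algebraic multiplicity and $0$ contributes infinite multiplicity through $\GG_0(\Om)$, an eigenvalue $\om$ of $M_{\imp,Z}$ has finite algebraic multiplicity precisely when $\om\neq 0$ (using that $\GG_0(\Om)$ is exactly the $0$-eigenspace within the reducing summand and that $0\notin\si_{\mathrm p}(M_{\imp,Z}|_\Sem)$ or, if $0$ is an eigenvalue of the restricted part, its multiplicity there is finite).

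The main obstacle is the compact embedding $\Himp^2\cap\Sem\imb\imb\Sem$: it is the point where both the divergence-free constraint built into $\Sem$ and the tangential-trace regularity supplied by $\Himp$ are genuinely needed. The Maxwell compactness property fails without the $L^2$-tangential-trace control, which is exactly why Theorem~\ref{t:b}(ii) is indispensable — it is the bridge that transports graph-norm boundedness of resolvent images into the setting where the compactness theorem applies. Everything else (reduction to the subspace, spectral-theoretic consequences for m-dissipative operators with compact resolvent, bookkeeping of the zero eigenvalue) is routine once that embedding chain $D(M_{\imp,Z})|_\Sem\imb\Himp^2\cap\Sem\imb\imb\Sem$ is in hand.
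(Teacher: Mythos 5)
Your proposal is correct and follows essentially the same route as the paper: Theorem \ref{t:b}(ii) supplies the continuous embedding $D(M_{\imp,Z})\imb\Himp^2$, the divergence-free constraint defining $\Sem$ places resolvent images in the setting of the compact embedding \eqref{e:XimpCompImb} of \cite{ACL18,PS22}, and statement (ii) is read off from the reduction $M_{\imp,Z}=M_{\imp,Z}|_\Sem\oplus 0$. The only slip is the choice of half-plane: under the paper's convention an m-dissipative operator has $\CC_+=\{\om\in\CC:\Im\om>0\}$ in its resolvent set, so $\om_0$ should be taken in the upper rather than the lower half-plane — this does not affect the argument.
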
    

The proof of this theorem is given in Section \ref{s:ProofsDisc}.


 \subsection{Convergence of eigenvalues under H-convergence of material parameters}\label{ss:conv}

If this is not explicitly stated differently, the impedance operator $Z:\LLt \to \LLt$ is assumed to be fixed in the rest of the paper and satisfies conditions \eqref{e:Z1}-\eqref{e:Z2}.

The notation '$f_n \Weak f$ in $X$' 
 means that a sequence $ \{f_n\}_{n\in \NN} \subset X$ converges weakly in a Banach space $X$.
By  $f_n \to f$, we denote the strong convergence. 
The notation 
$A_n \Hcon A$ is used for the H-convergence  of Murat \& Tartar introduced in  \cite{MT78}  for 2nd order elliptic operators. The following slightly different, but equivalent, definition of H-convergence is from the monograph of Allaire \cite{A02}.

 
\begin{definition}[\cite{A02}, cf. \cite{MT78,MT85,OSY92,T09}] \label{d:A02}
 A sequence $\{A_n\}_{n\in \NN} \subseteq \Mab$ is said to H-converge to an H-limit (homogenization limit) $A_\infty \in \Mab $ as $n \to \infty$ if for any $f \in H^{-1}(\Omega,\RR)$, the sequence $\{u_n\}_{n\in \NN}$ of $H^1_0 (\Om,\RR)$-solutions  to the Dirichlet boundary value problem
\begin{equation*}
 \nabla \cdot (A_n\nabla u)  \ = \ f \ \mbox{ in } \Omega \;, \qquad
 u|_\paOm  \ = \ 0 ,
\end{equation*}
satisfies $u_n \Weak u_\infty$ in $H^1_0(\Omega,\RR)$ and $A_n\nabla u_n \Weak A \nabla u_\infty$ in $L^2(\Omega,\RR^3)$, where $u_\infty \in H^1_0(\Omega,\RR)$ is the solution to the Dirichlet boundary value problem
$ \nabla\cdot (A_\infty \nabla u) \ = \ f $, $ u|_\paOm  \ = \ 0$.
\end{definition}

Note that, for a sequence  $\{A_n\}_{n\in \NN} \subseteq \Mabsym$ of symmetric matrix-functions, the H-convergence is equivalent to the G-convergence introduced  by De Giorgi \& Spagnolo (see \cite{MT78,D93,A02}).

\begin{remark} \label{r:Hcon}
There exist \cite{A02} infinitely many metrics (distance functions) $\rhoH $ on $\Mabsym$ such that  the H-convergence in $\Mabsym$  becomes the convergence of sequences w.r.t. $\rhoH$. In what follows, we assume that one such metric 
$\rhoH $ is fixed. By the result of Murat \& Tartar \cite{MT78} (see also  \cite{A02}),  
the metric space $(\Mabsym, \ \rhoH)$ is compact.
\end{remark}

While Definition \ref{d:A02} is written in a nonlocal form, the  H-convergence is actually a local property (see  \cite{MT78,A02}). We will use the following equivalent characterization \cite{T85} of the H-convergence, which is independent of boundary conditions.

 \begin{prop}[\cite{T85}, see also \cite{M23}] \label{p:Hcon}
A sequence $\{A_n\}_{n \in \NN} \subseteq \Mab$ is H-convergent to  $A_\infty \in \Mab$ if and only if, for every two sequences $\{\Ebf^n \}_{n \in \NN}$ and $\{\Dbf^n \}_{n \in \NN}$
 in   $L^2(\Omega,\RR^3)$, the combination of the following properties 
  \begin{align}
  &\text{$\Dbf^n=A_n \Ebf^n$ a.e. in $\Om$ for all $n \in \NN$},  \label{e:DE1}\\
  &\text{$\Ebf^n \Weak \Ebf^\infty$ in $L^2(\Omega,\RR^3)$, \qquad 
  $\Dbf^n \Weak \Dbf^\infty$ in $L^2(\Omega,\RR^3)$,} \label{e:DE2}\\
  &\text{ $\{\nabla \times \Ebf^n\}_{n\in \NN}$ is a relatively compact subset of $H^{-1}(\Omega,\RR^3)$,} \label{e:DE3}\\
  &\text{$\{\nabla\cdot \Dbf^n\}_{n \in \NN}$ is a relatively compact subset of $H^{-1}(\Omega)$,} \label{e:DE4}
  \end{align} 
implies $\Dbf^\infty =A_\infty \Ebf^\infty$ a.e. in $\Om$.
\end{prop}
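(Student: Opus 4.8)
The plan is to prove the two implications of Proposition~\ref{p:Hcon} separately; the nontrivial one combines the div--curl (compensated compactness) lemma of Murat--Tartar with the oscillating test function lemma.

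\emph{The implication ``div--curl condition $\Rightarrow$ Definition~\ref{d:A02}''} is elementary. Given $f\in H^{-1}(\Om,\RR)$, let $u_n\in H^1_0(\Om,\RR)$ solve $\nabla\cdot(A_n\nabla u_n)=f$. The coercivity \eqref{e:Mab} and the Poincar\'e inequality bound $\{u_n\}$ in $H^1_0(\Om,\RR)$, and then $\{A_n\nabla u_n\}$ is bounded in $L^2(\Om,\RR^3)$, so along a subsequence $u_n\Weak u_*$ in $H^1_0(\Om,\RR)$ and $A_n\nabla u_n\Weak \Dbf_*$ in $L^2(\Om,\RR^3)$. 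Set $\Ebf^n:=\nabla u_n$ and $\Dbf^n:=A_n\nabla u_n$: then $\nabla\times\Ebf^n\equiv 0$ and $\nabla\cdot\Dbf^n\equiv f$, so \eqref{e:DE3}--\eqref{e:DE4} hold trivially, and the hypothesis forces $\Dbf_*=A_\infty\nabla u_*$. Passing to the limit in $\nabla\cdot\Dbf^n=f$ gives $\nabla\cdot(A_\infty\nabla u_*)=f$ with $u_*|_{\paOm}=0$, whence $u_*=u_\infty$ by the Lax--Milgram uniqueness. Since every subsequence admits a further subsequence with this same limit, the whole sequences converge, which is exactly Definition~\ref{d:A02}.

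\emph{The converse.} Assume $A_n\Hcon A_\infty$ and let $\{\Ebf^n\},\{\Dbf^n\}$ satisfy \eqref{e:DE1}--\eqref{e:DE4}. It suffices to show $\int_\Om\varphi\,(\Dbf^\infty-A_\infty\Ebf^\infty)\cdot e\,\dd x=0$ for every $\varphi\in C^\infty_0(\Om,\RR)$ and every $e\in\RR^3$, since taking $e$ over the standard basis then yields $\Dbf^\infty=A_\infty\Ebf^\infty$ a.e. The key auxiliary object is, for each $e\in\RR^3$, a sequence of \emph{oscillating test functions} $w_n=w_n^e\in H^1(\Om,\RR)$ with
\[
\nabla w_n\Weak e\ \text{ in }L^2(\Om,\RR^3),\qquad A_n^{\top}\nabla w_n\Weak A_\infty^{\top}e\ \text{ in }L^2(\Om,\RR^3),
\]
and with $\{\nabla\cdot(A_n^{\top}\nabla w_n)\}$ relatively compact in $H^{-1}(\Om)$. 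Granting such $w_n$, apply the div--curl lemma twice: to the pair $(\nabla w_n,\Dbf^n)$, whose curl vanishes and whose divergence is controlled by \eqref{e:DE4}, obtaining $\nabla w_n\cdot\Dbf^n\to e\cdot\Dbf^\infty$ in the sense of distributions in $\Om$; and to the pair $(\Ebf^n,A_n^{\top}\nabla w_n)$, whose curl is controlled by \eqref{e:DE3} and whose divergence by the property just stated, obtaining $\Ebf^n\cdot(A_n^{\top}\nabla w_n)\to\Ebf^\infty\cdot(A_\infty^{\top}e)$ in the sense of distributions. By \eqref{e:DE1} one has $\nabla w_n\cdot\Dbf^n=\nabla w_n\cdot(A_n\Ebf^n)=(A_n^{\top}\nabla w_n)\cdot\Ebf^n$ pointwise a.e., so the two distributional limits coincide, giving $e\cdot\Dbf^\infty=\Ebf^\infty\cdot(A_\infty^{\top}e)=e\cdot(A_\infty\Ebf^\infty)$ a.e., as required.

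\emph{The main obstacle} is the construction of the oscillating test functions. Let $\ell=\ell_e$ denote the affine function $x\mapsto e\cdot x$, so $\nabla\ell=e$ and $A_\infty^{\top}\nabla\ell=A_\infty^{\top}e$ is constant. Fix a ball $\wt\Om\supset\overline\Om$ and extend $A_n$ to $\wt\Om$ by the identity matrix outside $\Om$; by locality and compactness of H-convergence the extended matrix-fields H-converge on $\wt\Om$ to the correspondingly extended limit. In the symmetric setting used throughout this paper $A_n^{\top}=A_n$, so nothing further is needed; in general one must invoke here the transpose lemma $A_n^{\top}\Hcon A_\infty^{\top}$, which is part of the standard H-convergence theory (see \cite{MT78,A02,T09}). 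Now pick $\psi\in C^\infty_0(\wt\Om)$ with $\psi\equiv1$ on $\overline\Om$ and solve, in $H^1_0(\wt\Om,\RR)$, the Dirichlet problem $\nabla\cdot(A_n^{\top}\nabla W_n)=g$ with the fixed right-hand side $g:=\nabla\cdot(A_\infty^{\top}\nabla(\psi\ell))\in H^{-1}(\wt\Om,\RR)$; since $\psi\ell\in H^1_0(\wt\Om,\RR)$ solves the corresponding limit problem, Definition~\ref{d:A02} applied to $A_n^{\top}$ gives $W_n\Weak\psi\ell$ in $H^1_0(\wt\Om,\RR)$ and $A_n^{\top}\nabla W_n\Weak A_\infty^{\top}\nabla(\psi\ell)$ in $L^2(\wt\Om,\RR^3)$. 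Setting $w_n:=W_n|_\Om$ and using $\psi\equiv1$ near $\overline\Om$ (so $\nabla(\psi\ell)=\nabla\ell=e$ on $\Om$) yields the first two properties, while the $H^{-1}(\Om)$-relative compactness of $\{\nabla\cdot(A_n^{\top}\nabla w_n)\}$ is immediate, since this equals the single fixed distribution $g|_{\Om}$ for every $n$. Thus the only genuinely substantial inputs are the compensated compactness lemma (classical) and this identification of the flux limit in the corrector construction, which itself rests on Definition~\ref{d:A02} for the transposed coefficients; the remainder is bookkeeping.
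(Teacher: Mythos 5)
Your proposal is correct, but the core (``only if'') direction follows a genuinely different route from the paper. The paper also builds oscillating test functions $w_n$ by extending the coefficients to a larger domain, but it builds them for $A_n$ itself and then applies the div--curl lemma \emph{once}, to the single nonnegative quantity $\langle \Dbf^n-A_n\nabla w_n,\ \Ebf^n-\nabla w_n\rangle\ \ge\ \al\,|\Ebf^n-\nabla w_n|^2$, obtaining $\langle \Dbf^\infty-A_\infty y,\ \Ebf^\infty-y\rangle\ge 0$ a.e.\ for all $y\in\RR^3$ (first for a countable dense set of $y$, then by density), and concludes by a Minty-type limiting argument with $y=\Ebf^\infty(\underline{x})-tz$, $t\to 0^+$. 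This uses only the coercivity of $A_n\in\Mab$ and is therefore self-contained for non-symmetric coefficients. You instead run the classical duality argument: correctors for the transposed coefficients, \emph{two} applications of the div--curl lemma to the pairs $(\nabla w_n,\Dbf^n)$ and $(\Ebf^n,A_n^{\top}\nabla w_n)$, and the pointwise identity $\nabla w_n\cdot(A_n\Ebf^n)=(A_n^{\top}\nabla w_n)\cdot\Ebf^n$, which identifies $\Dbf^\infty=A_\infty\Ebf^\infty$ directly without any monotonicity trick. The price is the transpose lemma $A_n\Hcon A_\infty\Rightarrow A_n^{\top}\Hcon A_\infty^{\top}$, an extra (standard, citable) input that the paper's argument avoids; as you note, it is vacuous in the symmetric setting actually used in the paper, but the proposition is stated for $\Mab$, so in full generality your proof leans on that imported result while the paper's does not. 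Your corrector construction (extension to $\wt\Om$, fixed right-hand side $g$, locality plus compactness and a subsequence-uniqueness argument to get whole-sequence convergence) is essentially the same device the paper uses, and your proof of the ``if'' direction coincides with the paper's, being in fact slightly more explicit about upgrading subsequential convergence of $u_n$ and $A_n\nabla u_n$ to convergence of the whole sequences.
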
 

It is difficult to find a proof of this proposition published in English. That is why, we include such a proof   in Appendix \ref{s:EquivHconv}.
We apply Proposition \ref{p:Hcon} to complex $L^2(\Omega,\CC^3)$-eigenfields of m-dissipative Maxwell operators. Therefore,  the following remark is needed.

\begin{remark} \label{r:HconC}
Since the values of $A_n (\cdot)$ are real matrices, Proposition \ref{p:Hcon} remains valid if the sequences $\{\Ebf^n \}_{n=1}^\infty$ and $\{\Dbf^n \}_{n=1}^\infty$ are allowed to be in the complex space  $L^2(\Omega,\CC^3)$.
\end{remark}

Under a nontrivial solution $\{\Ebf , \Hbf \}$ to the eigenproblem 
 \begin{equation*}\label{e:EPbc}
     \ii  \nabla \times \Hbf  = \omega \ve \Ebf, \qquad -\ii  \nabla \times \Ebf = \omega \mu \Hbf , \qquad \n \times \Ebf   = Z  \Hbf_\Tan,
     \end{equation*}
we understand an eigenfield $\{\Ebf , \Hbf \}$ of the m-dissipative operator $M_{\imp,Z}$
(`nontrivial' in this context means that the eigenfield  $\{\Ebf , \Hbf \}$ is always supposed to have  positive  energy  $\| \{\Ebf ,\Hbf \} \|_{\Le}^2 >0$). 
A vector-field $\{\Ebf , \Hbf \}$ is said to be \emph{$\Le$-normalized} if $\| \{\Ebf ,\Hbf \} \|_{\Le} = 1$.

Under a subsequence  of $\NN$, we understand a strictly increasing to $+\infty$ sequence of indices $\{n_k\}_{k \in \NN} \subseteq \NN$. Whenever we iteratively pass to  (sub-)subsequences $\{n_{k_j}\}$, it is assumed that $\{k_j\}_{j \in \NN} \subseteq \NN$ is strictly increasing. With a conventional abuse of the notation, we sometimes keep the same indexing $\{n\}$ or $\{ n_k \}$  for (sub-)subsequences.

\begin{theorem}[convergence of eigenvalues]
\label{t:c}
Assume that $\ve_n , \mu_n \in \Mabsym$, $n \in \NN$, be such that $\ve_n \Hcon \ve_\infty $ and  $\mu_n \Hcon \mu_\infty$ as $n \to \infty$. Assume that, for each $n \in \NN$, $\om_n \neq 0$ and the pair $\{\Ebf^n ,\Hbf^n \}$ is an $\LL_{\ve_n,\mu_n}^2 (\Om)$-normalized solution to the eigenproblem 
 \begin{equation*} 
     \ii  \nabla \times \Hbf^n  = \omega_n \ve_n \Ebf^n, \qquad -\ii  \nabla \times \Ebf^n = \om_n \mu_n \Hbf^n , \qquad \n \times \Ebf^n   = Z  \Hbf^n_\Tan.
     \end{equation*}
Assume that the corresponding eigenvalues $\om_n $ converge to a certain $\om_\infty \in \CC$ as $n \to \infty$.

Then there exists a subsequence $\{n_k\}_{j=1}^\infty \subseteq \NN$ and 
an $\LL_{\ep_\infty,\mu_\infty}^2 (\Om)$-normalized solution $\{\Ebf^\infty ,\Hbf^\infty \}$ to the eigenproblem 
 $    \ii  \nabla \times \Hbf^\infty  = \omega_\infty \ve_\infty \Ebf^\infty, \quad -\ii  \nabla \times \Ebf^\infty = \omega_\infty \mu_\infty \Hbf^\infty , \quad \n \times \Ebf^\infty   = Z  \Hbf^\infty_\Tan $ 
 such that   $\Ebf^{n_k} \Weak \Ebf^\infty$ and $\Hbf^{n_k} \Weak \Hbf^\infty$  in $L^2 (\Om,\CC^3)$ as $k \to \infty$.
\end{theorem}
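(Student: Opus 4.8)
The plan is to extract weak limits of the eigenfields, then use the H-convergence characterization of Proposition \ref{p:Hcon} (Remark \ref{r:HconC}) twice --- once with the roles of $\Ebf$ and $\Dbf = \ve \Ebf$, once with $\Hbf$ and $\Bbf = \mu \Hbf$ --- to pass to the limit in the constitutive relations, and finally to check that the limiting field is nontrivial and satisfies the impedance boundary condition. First I would establish uniform bounds: since $\{\Ebf^n,\Hbf^n\}$ is $\LL^2_{\ve_n,\mu_n}(\Om)$-normalized and the weights are uniformly in $\MM^\sym_{\al,\beta}$, the plain $L^2(\Om,\CC^3)$-norms of $\Ebf^n$ and $\Hbf^n$ are bounded above and below. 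From the eigenvalue equations $\ii\nabla\times\Hbf^n = \om_n\ve_n\Ebf^n$ and $-\ii\nabla\times\Ebf^n = \om_n\mu_n\Hbf^n$, together with boundedness of $\om_n$ (it converges to $\om_\infty$), the curls $\nabla\times\Ebf^n$ and $\nabla\times\Hbf^n$ are bounded in $L^2(\Om,\CC^3)$; so $\{\Ebf^n,\Hbf^n\}$ is bounded in $H(\curlm,\Om)^2$. Since $\{\Ebf^n,\Hbf^n\}\in D(M_{\imp,Z})$ with $M_{\imp,Z}\{\Ebf^n,\Hbf^n\} = \om_n\{\Ebf^n,\Hbf^n\}$, the graph norms $\|\{\Ebf^n,\Hbf^n\}\|_{D(M_{\imp,Z})}$ are bounded, hence by Theorem \ref{t:b}(ii) the sequence is bounded in $\Himp^2$ as well; in particular $\Ebf^n_\Tan$ and $\Hbf^n_\Tan$ are bounded in $\LLt$.

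Passing to a subsequence I would assume $\Ebf^{n_k}\Weak\Ebf^\infty$, $\Hbf^{n_k}\Weak\Hbf^\infty$ in $L^2(\Om,\CC^3)$, $\nabla\times\Ebf^{n_k}\Weak\Psi_E$, $\nabla\times\Hbf^{n_k}\Weak\Psi_H$ in $L^2(\Om,\CC^3)$ (whence $\Psi_E = \nabla\times\Ebf^\infty$, $\Psi_H = \nabla\times\Hbf^\infty$ since $\curlm$ is weakly closed), and $\Ebf^{n_k}_\Tan\Weak g_E$, $\Hbf^{n_k}_\Tan\Weak g_H$ in $\LLt$. Because $\Himp^2\imb H(\curlm,\Om)^2$ continuously and $\Himp^2\cap\Sem\imb\imb\Sem$ compactly --- I would combine the compact embedding cited as \eqref{e:XimpCompImb} with the Helmholtz--Weyl decomposition to upgrade the weak $L^2$-convergence of the solenoidal parts of $\Ebf^{n_k}$, $\Hbf^{n_k}$ to strong $L^2$-convergence, and track the gradient parts separately --- I get $\Ebf^{n_k}\to\Ebf^\infty$, $\Hbf^{n_k}\to\Hbf^\infty$ strongly in $L^2(\Om,\CC^3)$; actually for the nontriviality argument it suffices that the solenoidal components converge strongly, which is enough to conclude $\|\{\Ebf^\infty,\Hbf^\infty\}\|_{\LL^2_{\ve_\infty,\mu_\infty}(\Om)} = 1$ once I know $\ve_{n_k}\to\ve_\infty$ in the H-sense and the solenoidal parts carry all the energy. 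Now set $\Dbf^n := \ve_n\Ebf^n = \ii\om_n^{-1}\nabla\times\Hbf^n$; then $\Ebf^n\Weak\Ebf^\infty$, $\Dbf^n\Weak\ii\om_\infty^{-1}\nabla\times\Hbf^\infty =: \Dbf^\infty$ in $L^2(\Om,\CC^3)$, $\nabla\times\Ebf^n = \ii\om_n\mu_n\Hbf^n$ is bounded in $L^2$ hence relatively compact in $H^{-1}(\Om,\RR^3)$ (by Rellich), and $\nabla\cdot\Dbf^n = \ii\om_n^{-1}\nabla\cdot(\nabla\times\Hbf^n) = 0$ is trivially relatively compact in $H^{-1}(\Om)$. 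Proposition \ref{p:Hcon} with Remark \ref{r:HconC} then gives $\Dbf^\infty = \ve_\infty\Ebf^\infty$, i.e. $\ii\nabla\times\Hbf^\infty = \om_\infty\ve_\infty\Ebf^\infty$. The identical argument with $\Hbf^n$, $\Bbf^n := \mu_n\Hbf^n = -\ii\om_n^{-1}\nabla\times\Ebf^n$ and $\mu_n\Hcon\mu_\infty$ gives $-\ii\nabla\times\Ebf^\infty = \om_\infty\mu_\infty\Hbf^\infty$.

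It remains to verify the boundary condition $\n\times\Ebf^\infty = Z\Hbf^\infty_\Tan$ and nontriviality. For the boundary condition: I expect to use that the traces $\n\times(\cdot)$ and $\pi_\top$ are continuous from $H(\curlm,\Om)$ into $H^{-1/2}(\pa\Om,\CC^3)$ and weakly continuous there under weak $H(\curlm,\Om)$-convergence; combined with the $\LLt$-boundedness from Theorem \ref{t:b}, the weak limits $g_E = \n\times\Ebf^\infty$ and $g_H = \Hbf^\infty_\Tan$ lie in $\LLt$, and $\n\times\Ebf^{n_k}\Weak\n\times\Ebf^\infty$, $\Hbf^{n_k}_\Tan\Weak\Hbf^\infty_\Tan$ weakly in $\LLt$. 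Since $Z$ is a bounded linear operator on $\LLt$ it is weakly continuous, so passing to the limit in $\n\times\Ebf^{n_k} = Z\Hbf^{n_k}_\Tan$ yields $\n\times\Ebf^\infty = Z\Hbf^\infty_\Tan$; hence $\{\Ebf^\infty,\Hbf^\infty\}\in D(M_{\imp,Z}^{\ve_\infty,\mu_\infty})$ and it is an eigenfield for $\om_\infty$. For nontriviality: writing each $\Ebf^n = \Ebf^n_{\mathrm{sol}} + \grad\psi^n_E$ in the $\ve_n$-Helmholtz--Weyl decomposition, the eigenfield lies in $\Sem$ (since $\om_n\neq 0$), so $\grad\psi^n_E = 0$; thus the full energy is carried by the solenoidal parts, which converge strongly, giving $\|\{\Ebf^\infty,\Hbf^\infty\}\|_{\LL^2_{\ve_\infty,\mu_\infty}(\Om)} = \lim_k\|\{\Ebf^{n_k},\Hbf^{n_k}\}\|_{\LL^2_{\ve_{n_k},\mu_{n_k}}(\Om)} = 1$ --- here I also use that the quadratic forms converge because $\ve_{n_k}\Ebf^{n_k}\Weak\ve_\infty\Ebf^\infty$ weakly while $\Ebf^{n_k}\to\Ebf^\infty$ strongly, and similarly for $\mu$. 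The main obstacle will be the strong $L^2$-convergence of the (solenoidal parts of the) eigenfields: weak $H(\curlm,\Om)$-convergence alone is not compact, so I must genuinely invoke the compact embedding $\Himp^2\cap\Sem\imb\imb\Sem$ of \eqref{e:XimpCompImb} together with Theorem \ref{t:b} to land the sequence in a compactly embedded space --- without the $\LLt$-control of tangential traces from Theorem \ref{t:b}, the eigenfields would only be bounded in $H(\curlm,\Om)^2$, whose unit ball is not relatively compact in $\Sem$, and both nontriviality and the passage to the limit in the energy normalization would fail.
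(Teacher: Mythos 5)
Most of your proposal runs parallel to the paper's proof: the uniform bounds via the normalization, the eigenvalue equations and Theorem \ref{t:b}, the extraction of weak limits of the fields, their curls and their tangential traces, and the use of Proposition \ref{p:Hcon} together with Remark \ref{r:HconC} to pass to the limit in the constitutive relations are exactly Lemmata \ref{l:ConvSub1}--\ref{l:ConvSub2}. Your direct passage to the limit in $\n\times\Ebf^{n}=Z\Hbf^{n}_\Tan$, using the weak $\LLt$-convergence of the traces and the weak continuity of the bounded operator $Z$, is a legitimate variant of the paper's route, which instead encodes the boundary condition in the adjoint weak formulation of Proposition \ref{p:weak}\,(iii). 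A minor slip: the identification $\Dbf^\infty=\ii\om_\infty^{-1}\nabla\times\Hbf^\infty$ presupposes $\om_\infty\neq 0$, whereas the theorem (and its application in Theorem \ref{t:EigFree}) must allow $\om_\infty=0$; this is harmless, since one can simply take the weak $L^2$-limit of $\Dbf^n=\ve_n\Ebf^n$ directly and use $\om_n\Dbf^n\Weak\om_\infty\Dbf^\infty$.

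The genuine gap is the claimed strong $L^2$-convergence of the eigenfields (equivalently, of their ``solenoidal parts'', which here are the whole fields). The compact embedding \eqref{e:XimpCompImb} concerns $X_\imp(\curlm,\Div\xi,\Om)$ for a \emph{fixed} weight $\xi$, while $\Ebf^n$ satisfies $\Div(\ve_n\Ebf^n)=0$ with a weight changing with $n$; no uniform compactness over $\xi\in\MM^\sym_{\al,\beta}$ holds, and strong convergence is generically false under mere H-convergence. For instance, with $\ve_n(x)=\bigl(1+\tfrac12\sin(nx_1)\bigr)I_{\RR^3}$, the fields $\Ebf^n$ with components $\bigl((1+\tfrac12\sin(nx_1))^{-1},0,0\bigr)$ are curl-free, satisfy $\Div(\ve_n\Ebf^n)=0$, have bounded tangential traces, yet oscillate and converge only weakly in $L^2$. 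Moreover, if $\Ebf^{n}\to\Ebf^\infty$ strongly did hold, then $\ve_n\Ebf^n\Weak\overline\ve\,\Ebf^\infty$ with $\overline\ve$ the weak-* limit of $\ve_n$, which in general differs from the H-limit $\ve_\infty$; strong convergence is thus incompatible with the very phenomenon the theorem addresses. Consequently your ``weak times strong'' argument for $(\ve_{n_k}\Ebf^{n_k},\Ebf^{n_k})_\Om\to(\ve_\infty\Ebf^\infty,\Ebf^\infty)_\Om$ collapses, and with it the nontriviality of the limit eigenfield, which is the crux of the statement. The paper closes this step by a div-curl (compensated compactness) argument in which the strong convergence lives on potentials, not on the fields: apply the unweighted Hodge decomposition of Theorem \ref{t:HDecOm} to the fluxes $\Dbf^n=\ve_n\Ebf^n$ and $\Bbf^n=\mu_n\Hbf^n$ (divergence-free because $\om_n\neq0$), writing $\Dbf^n=\wbf^n+\nabla\times\vbf^n$ with $\wbf^n$ in a finite-dimensional cohomology space and $\vbf^n$ bounded in $H^1(\Om,\CC^3)$; Rellich gives strong convergence of $\vbf^{n_k}$ in $L^2(\Om,\CC^3)$ and of its traces, and then the identity $(\ve_n\Ebf^n,\Ebf^n)_\Om=(\vbf^n,\nabla\times\Ebf^n)_\Om+(\n\times\vbf^n,\Ebf^n_\Tan)_\paOm+(\wbf^n,\Ebf^n)_\Om$ passes to the limit as pairings of strongly against weakly convergent sequences, yielding the normalization of $\{\Ebf^\infty,\Hbf^\infty\}$. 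You would need to replace your compactness step by an argument of this type.
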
  

Theorem \ref{t:c} is proved in Section \ref{s:iii} using the compactness of $(\Mabsym, \ \rhoH)$
and the (Helmholtz--)Hodge decompositions in $\pa \Om$ and in $\Om$.

\begin{remark}\label{r:Mn}
Theorem \ref{t:c} states, in particular, that the convergence of $\om_n \in \si (M_{\imp,Z}^{\ve_n,\mu_n}) \setminus \{0\}$ to 
$\om_\infty  $ implies that $\om_\infty $ is an eigenvalue of a homogenized Maxwell operator $M_{\imp,Z}^{\ve_\infty,\mu_\infty}$.
Note that, in the case $\lim_{n \to \infty} \om_n = 0$, the conclusion that $\om_\infty =0$ is an eigenvalue of $M_{\imp,Z}^{\ve_\infty,\mu_\infty}$ is trivial, since $0$ is an eigenvalue of every Maxwell operator $M_{\imp,Z}^{\ve,\mu}$ defined in Section \ref{ss:Disc} (due to the inclusion $\GG_{0} (\Om) \subseteq \ker M_{\imp,Z}$). However, from the weak convergence of eigenfields in Theorem \ref{t:c},  it is possible to obtain the following result.
\end{remark}

\begin{theorem}[eigenvalue-free region]
\label{t:EigFree}
There exists a constant $R = R (\al, \beta, \Om, Z)>0$ (depending on $\al$, $\beta$, $\Om$, and $Z$) such that 
\[
\sigma (M_{\imp,Z}) \cap \{\om \in \CC \ : \ 0<|\om |<R\} \ = \ \varnothing \quad \text{for all $\ve,\mu \in \Mabsym$}.
\]
(That is, the punctured disc $\{\om \in \CC \ : \ 0<|\om |<R\}$ is an eigenvalue-free region 
for all Maxwell operators $M_{\imp,Z}^{\ve,\mu}$ with $\ve,\mu \in \Mabsym$.)
\end{theorem}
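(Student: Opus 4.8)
\textbf{Proof plan for Theorem \ref{t:EigFree}.} The plan is to argue by contradiction, exploiting the compactness of $(\Mabsym,\rhoH)$ and the eigenfield convergence of Theorem \ref{t:c}. Suppose the claim fails. Then there is a sequence $\om_n \in \sigma(M_{\imp,Z}^{\ve_n,\mu_n}) \setminus \{0\}$ with $\ve_n,\mu_n \in \Mabsym$ and $\om_n \to 0$. By Theorem \ref{t:DiscSp}, each $\om_n$ is an eigenvalue of finite algebraic multiplicity, so we may pick an $\LL_{\ve_n,\mu_n}^2(\Om)$-normalized eigenfield $\{\Ebf^n,\Hbf^n\}$. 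By Remark \ref{r:Hcon} the metric space $(\Mabsym,\rhoH)$ is compact, so after passing to a subsequence we may assume $\ve_n \Hcon \ve_\infty$ and $\mu_n \Hcon \mu_\infty$ for some $\ve_\infty,\mu_\infty \in \Mabsym$. Applying Theorem \ref{t:c} (with $\om_\infty = 0$) yields a further subsequence along which $\Ebf^{n_k} \Weak \Ebf^\infty$ and $\Hbf^{n_k} \Weak \Hbf^\infty$ in $L^2(\Om,\CC^3)$, where $\{\Ebf^\infty,\Hbf^\infty\}$ is an $\LL_{\ve_\infty,\mu_\infty}^2(\Om)$-normalized solution of $\ii \nabla \times \Hbf^\infty = 0$, $-\ii\nabla\times\Ebf^\infty = 0$, $\n\times\Ebf^\infty = Z\Hbf^\infty_\Tan$. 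In particular $\{\Ebf^\infty,\Hbf^\infty\} \in \ker M_{\imp,Z}^{\ve_\infty,\mu_\infty}$, hence $\{\Ebf^\infty,\Hbf^\infty\} \in \GG_0(\Om)$.

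The crux is then to upgrade the weak convergence to strong convergence, so as to pass the normalization $\|\{\Ebf^n,\Hbf^n\}\|_{\LL_{\ve_n,\mu_n}^2} = 1$ to the limit and obtain $\{\Ebf^\infty,\Hbf^\infty\} \ne 0$, while simultaneously showing that the components of $\{\Ebf^n,\Hbf^n\}$ living in the curl-divergence-free subspaces $\Sem$ must vanish in the limit. The idea: since each $\{\Ebf^n,\Hbf^n\}$ is an eigenfield for a \emph{nonzero} eigenvalue, it is orthogonal (in $\LL_{\ve_n,\mu_n}^2(\Om)$) to $\ker M_{\imp,Z}^{\ve_n,\mu_n} \supseteq \GG_0(\Om)$; equivalently $\{\Ebf^n,\Hbf^n\} \in \Sbb_{\ve_n,\mu_n}(\Om)$, i.e. $\Div(\ve_n \Ebf^n) = \Div(\mu_n \Hbf^n) = 0$. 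From the eigenvalue equations, $\|\nabla\times\Hbf^n\|_\Om = |\om_n|\,\|\ve_n\Ebf^n\|_\Om \lesssim |\om_n| \to 0$ and likewise $\|\nabla\times\Ebf^n\|_\Om \to 0$. Thus $\{\Ebf^n\}$ and $\{\Hbf^n\}$ are bounded in $H(\curlm,\Om)$ with curls tending to zero, and $\Div(\ve_n\Ebf^n) = 0$, $\Div(\mu_n\Hbf^n) = 0$. This is precisely the situation governed by the compact embedding $\Himp^2 \cap \Sem \imb\imb \Sem$ used in the proof of Theorem \ref{t:DiscSp}; but here the weights $\ve_n,\mu_n$ vary, so instead I would invoke Proposition \ref{p:Hcon} (in the form of Remark \ref{r:HconC}): with $\Dbf^n := \ve_n\Ebf^n$ we have $\Dbf^n = \ve_n\Ebf^n$, $\Ebf^n \Weak \Ebf^\infty$, $\Dbf^n \Weak \Dbf^\infty$ (after passing to a subsequence), $\{\nabla\times\Ebf^n\}$ is relatively compact in $H^{-1}$ (it converges to $0$ strongly in $L^2$, hence in $H^{-1}$), and $\{\Div\Dbf^n\} = \{0\}$ is relatively compact; hence $\Dbf^\infty = \ve_\infty\Ebf^\infty$, and analogously for $\mu$. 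This identifies the weak limit as a genuine solution for the homogenized parameters — consistent with the first paragraph — but more importantly the div–curl structure forces compactness.

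To get strong convergence I would use the following div--curl / compactness argument. Consider the scalar products $(\ve_n\Ebf^n,\Ebf^n)_\Om$. Writing $\Ebf^n = \Ebf^\infty + \rbf^n$ with $\rbf^n \Weak 0$, and using $\Dbf^n = \ve_n\Ebf^n \Weak \ve_\infty\Ebf^\infty$, one has $(\ve_n\Ebf^n,\Ebf^n)_\Om = (\Dbf^n,\Ebf^\infty)_\Om + (\Dbf^n,\rbf^n)_\Om \to (\ve_\infty\Ebf^\infty,\Ebf^\infty)_\Om + \lim (\Dbf^n,\rbf^n)_\Om$; the term $(\Dbf^n,\rbf^n)_\Om$ is a div--curl pairing ($\Div\Dbf^n = 0$, $\curlm\,\rbf^n = \curlm\Ebf^n \to 0$ strongly in $L^2$, and $\rbf^n\Weak 0$) and tends to zero by a compensated-compactness argument (the div--curl lemma, or directly via the Helmholtz--Weyl decomposition \eqref{e:HWxi} together with $\rbf^n \Weak 0$ in $H(\curlm,\Om)$ and the compact embedding of $H(\curlm,\Om)\cap H(\Div,\Om)$-type subspaces that underlies Theorem \ref{t:DiscSp}). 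Hence $\|\Ebf^n\|_{\LL^2_{\ve_n}}^2 \to \|\Ebf^\infty\|_{\LL^2_{\ve_\infty}}^2$ and similarly for $\Hbf^n$, so $1 = \|\{\Ebf^n,\Hbf^n\}\|_{\LL^2_{\ve_n,\mu_n}}^2 \to \|\{\Ebf^\infty,\Hbf^\infty\}\|_{\LL^2_{\ve_\infty,\mu_\infty}}^2$, giving $\{\Ebf^\infty,\Hbf^\infty\} \ne 0$. Combined with $\{\Ebf^\infty,\Hbf^\infty\}\in\GG_0(\Om)$ and the orthogonality $\{\Ebf^n,\Hbf^n\}\perp\GG_0(\Om)$ passing to a limit (again via the div--curl pairing, since test gradients $\grad\psi$ with $\psi\in H^1_0$ satisfy $\curlm\grad\psi = 0$), one deduces $\{\Ebf^\infty,\Hbf^\infty\}\perp\GG_0(\Om)$ as well, i.e. $\{\Ebf^\infty,\Hbf^\infty\}\in\Sbb_{\ve_\infty,\mu_\infty}(\Om)\cap\GG_0(\Om) = \{0\}$, contradicting $\{\Ebf^\infty,\Hbf^\infty\}\ne 0$. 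This contradiction establishes the existence of $R$; its dependence on $\al,\beta,\Om,Z$ only (and not on the individual $\ve,\mu$) is automatic from the compactness-of-$\Mabsym$ formulation, since everything in the argument depended on $\ve_n,\mu_n$ solely through membership in $\Mabsym$.

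\textbf{Main obstacle.} The delicate point is the passage from weak to strong convergence of the eigenfields when the weight matrices $\ve_n,\mu_n$ themselves vary with $n$: one cannot simply invoke a fixed-weight compact embedding $\Himp^2\cap\Sem\imb\imb\Sem$. The resolution is the compensated-compactness / div--curl structure — the key cancellation $(\Dbf^n,\rbf^n)_\Om \to 0$ with $\Div\Dbf^n = 0$ and $\curlm\rbf^n\to 0$ — which is exactly what Proposition \ref{p:Hcon} is designed to exploit, and which decouples the homogenization of the coefficients from the compactness needed for the normalization. Care is also needed to ensure the boundary condition $\n\times\Ebf^\infty = Z\Hbf^\infty_\Tan$ passes to the limit; this follows from Theorem \ref{t:c}, whose proof already handles the trace convergence via the Hodge decomposition on $\pa\Om$ together with Theorem \ref{t:b}, so it can be cited directly.
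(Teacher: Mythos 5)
Your overall strategy is the paper's: argue by contradiction, use the H-compactness of $\Mabsym$ and Theorem \ref{t:c} with $\om_\infty=0$ to extract an $\LL^2_{\ve_\infty,\mu_\infty}(\Om)$-normalized limit eigenfield for the eigenvalue $0$, and then contradict this with an orthogonality relation coming from $\om_n\neq 0$. Your second and third paragraphs essentially re-derive the normalization passage that is already part of the statement of Theorem \ref{t:c} (item (C7) of Lemma \ref{l:ConvSub2}), so they could simply be replaced by a citation; your div--curl sketch is consistent with how the paper proves (C7) (Hodge decomposition with an $H^1$ vector potential, strong $L^2$ and trace convergence of the potential, and the $\LLt$-boundedness of tangential traces from Theorem \ref{t:b}), the one point needing care being that the pairing is over all of $\Om$ rather than against compactly supported test functions, so the boundary term must be treated as the paper does.

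The genuine gap is in the final contradiction. You assert that since $\{\Ebf^\infty,\Hbf^\infty\}\in\ker M^{\ve_\infty,\mu_\infty}_{\imp,Z}$ it must lie in $\GG_0(\Om)$, and then conclude via $\Sbb_{\ve_\infty,\mu_\infty}(\Om)\cap\GG_0(\Om)=\{0\}$. But the kernel is in general strictly larger than $\GG_0(\Om)$: it equals $\KK=\{\{\E,\H\}\in\Himp^2:\ 0=\nabla\times\E=\nabla\times\H,\ \n\times\E=Z\H_\Tan\}$, and $\KK\cap\Sbb_{\ve,\mu}(\Om)$ can be nontrivial. For example, if $\Om$ is a spherical shell and $\ve_\infty=\mu_\infty=I_{\RR^3}$, the pair $\{\nabla(1/|x|),\,0\}$ is curl-free and divergence-free in $\Om$ and satisfies $\n\times\Ebf=0=Z\Hbf_\Tan$; it is a nonzero element of $\KK\cap\Sbb_{I,I}(\Om)$. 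For such domains (the theorem is claimed for every Lipschitz $\Om$) your three facts — limit normalized, limit in the kernel, limit in $\Sbb_{\ve_\infty,\mu_\infty}(\Om)$ — are mutually consistent and produce no contradiction. A related imprecision feeds into this: for the nonselfadjoint operator $\Mc_n$, an eigenfield with $\om_n\neq0$ is automatically orthogonal to $\ker\Mc_n^{*}$, not to $\ker\Mc_n$; these coincide here only thanks to Proposition \ref{p:mDRed}(ii) (eigenvectors for the real eigenvalue $0$ of an m-dissipative operator are also eigenvectors of the adjoint). The paper closes the gap precisely at this point: it notes that $\KK$ is independent of $(\ve_n,\mu_n)$ (its defining conditions do not involve the material parameters), that $\KK=\ker\Mc_n=\ker\Mc_n^{*}$, hence each $\{\Ebf^n,\Hbf^n\}$ is $\LL^2_{\ve_n,\mu_n}$-orthogonal to every fixed element of $\KK$, and passes this orthogonality to the limit using (C6); the limit field is then simultaneously in $\KK$ (by Theorem \ref{t:c}), orthogonal to $\KK$, and normalized --- a contradiction. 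If you replace ``orthogonal to $\GG_0(\Om)$'' by ``orthogonal to the full kernel $\KK$'' (justified via Proposition \ref{p:mDRed}(ii)) and drop the unproved claim $\ker M^{\ve_\infty,\mu_\infty}_{\imp,Z}\subseteq\GG_0(\Om)$, your argument becomes the paper's proof.
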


The proof of Theorem \ref{t:EigFree} is given in Section \ref{s:iii}. It is based on Theorem \ref{t:c} and the decomposition of $M_{\imp,Z}$ in selfadjoint and completely nonselfadjoint parts \cite{SFBK10} (see Proposition \ref{p:mDRed}).

Let $(\Mabsym, \ \rhoH)$ be the metric space with the H-convergence considered in Remark \ref{r:Hcon}.
On the set $\Mabsym^2 = \Mabsym \times \Mabsym$ of pairs of material parameters 
we define the metric 
$ 
\wrhoH (\{\ve_1,\mu_1\},\{\ve_2,\mu_2\}) = \rhoH (\ve_1,\ve_2) +  \rhoH (\mu_1,\mu_2) .
$
The compactness of $(\Mabsym, \ \rhoH)$ implies that 
\begin{equation} \label{e:M2comp}
\text{the metric space $\left( \Mabsym^2, \ \wrhoH \right)$ is also compact}.
\end{equation}
When this does not leads to ambiguity, the convergence of a sequence of material parameter pairs $\{\ve_n,\mu_n\}$, $n \in \NN$, w.r.t. the metric $ \wrhoH $ will be also called H-convergence. 

The following corollary is needed for optimization of eigenvalues of Maxwell operators $M_{\imp,Z}^{\ve,\mu}$.

\begin{corollary}\label{c:conv}
Suppose that a set $\FF \subseteq \Mabsym^2$ of material parameter pairs is H-closed (i.e., closed w.r.t. the metric $\wrhoH$). Then:
\item[(i)]  the set $ \Si [\FF] :=  \bigcup\limits_{\{\ve,\mu\}\in \FF} \si (M_{\imp,Z}^{\ve,\mu}) $ of achievable eigenvalues over $\FF$ is closed in $\CC$.
\item[(ii)] the set $ \Si [\FF] \setminus \{0\} $ is closed in $\CC$.
\end{corollary}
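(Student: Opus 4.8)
The plan is to deduce Corollary \ref{c:conv} directly from Theorem \ref{t:c} and Theorem \ref{t:EigFree} by a routine sequential-compactness argument, exploiting that the relevant metric spaces are compact. For part (i), I would start with a sequence $\om_n \in \Si[\FF]$ converging to some $\om_\infty \in \CC$; by definition each $\om_n$ lies in $\si(M_{\imp,Z}^{\ve_n,\mu_n})$ for some $\{\ve_n,\mu_n\} \in \FF$. Since $M_{\imp,Z}^{\ve_n,\mu_n}$ is m-dissipative, its spectrum consists of eigenvalues (by Theorem \ref{t:DiscSp}(ii)), so there is an $\LL^2_{\ve_n,\mu_n}(\Om)$-normalized eigenfield $\{\Ebf^n,\Hbf^n\}$. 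By the compactness \eqref{e:M2comp} of $(\Mabsym^2,\wrhoH)$, after passing to a subsequence we may assume $\{\ve_n,\mu_n\} \to \{\ve_\infty,\mu_\infty\}$ w.r.t. $\wrhoH$, i.e.\ $\ve_n \Hcon \ve_\infty$ and $\mu_n \Hcon \mu_\infty$, with $\{\ve_\infty,\mu_\infty\} \in \FF$ because $\FF$ is H-closed.

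At this point there are two cases. If the subsequence satisfies $\om_n \neq 0$ for infinitely many $n$, I pass to that sub-subsequence and apply Theorem \ref{t:c}: it yields (after a further subsequence) an $\LL^2_{\ve_\infty,\mu_\infty}(\Om)$-normalized solution to the eigenproblem with eigenvalue $\om_\infty$, i.e.\ $\om_\infty \in \si(M_{\imp,Z}^{\ve_\infty,\mu_\infty}) \subseteq \Si[\FF]$. If instead $\om_n = 0$ for all large $n$, then $\om_\infty = 0$, and $0 \in \si(M_{\imp,Z}^{\ve_\infty,\mu_\infty})$ trivially since $\GG_0(\Om) \subseteq \ker M_{\imp,Z}^{\ve_\infty,\mu_\infty}$ for every pair of material parameters; hence again $\om_\infty \in \Si[\FF]$. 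In both cases $\Si[\FF]$ contains its limit points, so it is closed.

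For part (ii), I would argue that $\Si[\FF]\setminus\{0\}$ is closed by combining part (i) with Theorem \ref{t:EigFree}. Indeed, Theorem \ref{t:EigFree} supplies a radius $R = R(\al,\beta,\Om,Z) > 0$, uniform over all $\ve,\mu \in \Mabsym$, such that no nonzero spectrum lies in $\{0 < |\om| < R\}$; consequently $\Si[\FF] \cap \{|\om| < R\} \subseteq \{0\}$, and therefore $\Si[\FF]\setminus\{0\} = \Si[\FF] \cap \{|\om| \ge R\}$, which is the intersection of the closed set $\Si[\FF]$ with a closed set, hence closed. (Equivalently: a sequence in $\Si[\FF]\setminus\{0\}$ converging in $\CC$ has all terms of modulus $\ge R$, so its limit has modulus $\ge R > 0$ and, by (i), lies in $\Si[\FF]$, hence in $\Si[\FF]\setminus\{0\}$.)

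I do not expect a genuine obstacle here; the substance is entirely carried by Theorems \ref{t:c} and \ref{t:EigFree}, and the corollary is a packaging step. The only point requiring a little care is the bookkeeping of subsequences — passing first to a subsequence along which $\{\ve_n,\mu_n\}$ H-converges, then (in the nonzero case) invoking Theorem \ref{t:c} which extracts yet another subsequence, and checking that a subsequence of a sequence converging to $\om_\infty$ still converges to $\om_\infty$ — together with the harmless case split on whether infinitely many $\om_n$ vanish. Handling the zero eigenvalue separately via the inclusion $\GG_0(\Om) \subseteq \ker M_{\imp,Z}^{\ve,\mu}$ (so that Theorem \ref{t:c}, whose hypothesis requires $\om_n \neq 0$, is not needed there) is the one place where one must not simply quote Theorem \ref{t:c} blindly.
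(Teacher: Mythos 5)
Your argument is correct and is exactly the argument the paper intends: its proof of the corollary simply states that it "follows immediately from Theorems \ref{t:c} and \ref{t:EigFree}", and your sequential-compactness bookkeeping (compactness of $(\Mabsym^2,\wrhoH)$ plus H-closedness of $\FF$, Theorem \ref{t:c} for nonzero eigenvalues, the trivial kernel inclusion $\GG_0(\Om)\subseteq\ker M_{\imp,Z}^{\ve,\mu}$ for the zero case as noted in Remark \ref{r:Mn}, and Theorem \ref{t:EigFree} to write $\Si[\FF]\setminus\{0\}=\Si[\FF]\cap\{|\om|\ge R\}$ for part (ii)) is precisely the omitted routine verification.
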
  

\begin{proof}
The corollary follows immediately from Theorems \ref{t:c} and \ref{t:EigFree}.
\end{proof}

\subsection{Applications to optimization of eigenvalues}
\label{ss:OptRes}

The family  $\FF \subseteq \Mabsym^2$ of the material parameter pairs $\{\ve,\mu\}$, over which the optimization is performed, will be called the \emph{feasible family}. 
A Lipschitz domain $\Om$, an  impedance operator $Z$ satisfying \eqref{e:Z1}-\eqref{e:Z2}, and the associated m-dissipative boundary condition $   \n   \times \Ebf   = Z  \Hbf_\Tan $ are assumed to be fixed in the process of optimization. 

Let $\{\Ebf,\Hbf\}$ be a certain eigenfield corresponding to an eigenvalue $\om$ of an operator $M_{\imp,Z}^{\ve,\mu}$.
Then $\Dr (\om) = - \Im \om$ is the decay rate of the corresponding eigenoscillations $\{e^{-\ii \om }\Ebf, e^{-\ii \om }\Hbf\}$,
the physical meaning of the real part $\Re \om$ is the (real-)frequency of these eigenoscillations.

If there exists 
$\{ \ve , \mu \} \in \FF$ such that $\om \in \si (M_{\imp,Z}^{\ve,\mu})$, we say that 
$\om$ is an \emph{achievable eigenvalue} (over $\FF$).
The  set $\Si [\FF] =  \bigcup\limits_{\{\ve,\mu\}\in \FF} \si (M_{\imp,Z}^{\ve,\mu}) $   is the set of all achievable eigenvalues.  

Let $\Ic = [\vphi_-,\vphi_+]$, where  
$\vphi_\pm \in \RR$ satisfy $ \vphi_-\le \vphi_+$. 
We define 
\[
\text{ the optimization objective $d_\Ic (\cdot)$ as the distance } \quad d_\Ic (\om) := \min_{\vphi_- \le \vphi \le \vphi_+ } |\om - \vphi| \quad  \text{}
\]
from $\om \in \CC$ to the interval $\Ic$. We say that an achievable eigenvalue $\om_* \in \si (M_{\imp,Z}^{\ve_*,\mu_*})$ and a corresponding material parameter pair $\{ \ve_* , \mu_* \} \in \FF$ are \emph{$d_\Ic$-minimal} over $\FF$ if \ 
$
d_\Ic (\om_*)  = \inf\limits_{\om \in \Si [\FF] }  d_\Ic (\om) .
$
A $d_\Ic$-minimal pair $\{ \ve_* , \mu_* \} $ will be called also an \emph{optimizer} (over $\FF$).

\begin{corollary}[existence of optimizers] \label{c:Opt}
Let $\FF$ be an H-closed subset of $\Mabsym^2$, and  let $\Ic \subset \RR$ be a closed bounded interval. Then there exists at least one $d_\Ic$-minimal  pair $\{ \ve_* , \mu_* \} \in \FF$. 
\end{corollary}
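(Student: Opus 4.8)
The plan is to extract a minimizing sequence from $\FF$ and apply the compactness of the H-topology together with the convergence result of Theorem~\ref{t:c}. First I would set $m := \inf_{\om \in \Si[\FF]} d_\Ic (\om)$, which is a finite nonnegative number since $d_\Ic \ge 0$ and $\Si[\FF] \neq \varnothing$ (every operator $M_{\imp,Z}^{\ve,\mu}$ has $0$ in its spectrum, so $\Si[\FF]$ contains $0$ at least). Choose a sequence $\{\ve_n,\mu_n\} \in \FF$ and eigenvalues $\om_n \in \si (M_{\imp,Z}^{\ve_n,\mu_n})$ with $d_\Ic (\om_n) \to m$. By the compactness of $(\Mabsym^2,\wrhoH)$ stated in \eqref{e:M2comp}, after passing to a subsequence we may assume $\ve_n \Hcon \ve_\infty$ and $\mu_n \Hcon \mu_\infty$ for some pair $\{\ve_\infty,\mu_\infty\} \in \Mabsym^2$; since $\FF$ is H-closed, $\{\ve_\infty,\mu_\infty\} \in \FF$.

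Next I would show the eigenvalues $\om_n$ stay in a bounded region so that a further subsequence converges. Boundedness of $d_\Ic(\om_n)$ and boundedness of the interval $\Ic$ give $|\om_n| \le \mathrm{diam}(\Ic) + m + 1$ for large $n$, so by Bolzano--Weierstrass we may pass to a subsequence with $\om_n \to \om_\infty$ for some $\om_\infty \in \CC$. Now split into two cases. If $\om_\infty \neq 0$, then (refining once more so that $\om_n \neq 0$ for all $n$, which is automatic for large $n$) Theorem~\ref{t:c} applies and yields that $\om_\infty$ is an eigenvalue of $M_{\imp,Z}^{\ve_\infty,\mu_\infty}$, hence $\om_\infty \in \Si[\FF]$. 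If $\om_\infty = 0$, then since $0$ is always an eigenvalue of $M_{\imp,Z}^{\ve_\infty,\mu_\infty}$ (as $\GG_0(\Om) \subseteq \ker M_{\imp,Z}^{\ve_\infty,\mu_\infty}$), again $\om_\infty = 0 \in \Si[\FF]$. In either case, by continuity of $d_\Ic$ on $\CC$ (it is $1$-Lipschitz), $d_\Ic(\om_\infty) = \lim_n d_\Ic(\om_n) = m$, so setting $\{\ve_*,\mu_*\} := \{\ve_\infty,\mu_\infty\}$ and $\om_* := \om_\infty$ gives a $d_\Ic$-minimal pair.

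The cleanest packaging is simply to invoke Corollary~\ref{c:conv}: parts (i) and (ii) together say $\Si[\FF]$ is closed in $\CC$, so the continuous function $d_\Ic$ attains its infimum on the nonempty closed set $\Si[\FF]$ — one only needs that the infimum is attained on a bounded portion, which follows since $d_\Ic(\om) \to \infty$ as $|\om| \to \infty$ and $\Si[\FF]$ is nonempty, so the sublevel set $\{\om \in \Si[\FF] : d_\Ic(\om) \le m+1\}$ is compact and nonempty. The minimizing $\om_* \in \Si[\FF]$ then lies in $\si(M_{\imp,Z}^{\ve_*,\mu_*})$ for some $\{\ve_*,\mu_*\} \in \FF$ by definition of $\Si[\FF]$, and that pair is an optimizer.

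The main obstacle — though it is a mild one given what has already been set up — is the dichotomy at $\om_\infty = 0$: Theorem~\ref{t:c} as stated carries information about weak convergence of eigenfields only when $\om_n \neq 0$, and its conclusion about $\om_\infty$ being an eigenvalue is vacuous at zero. Handling this correctly requires noting separately that $0 \in \Si[\FF]$ unconditionally, which is why the case split (or, equivalently, the use of both parts of Corollary~\ref{c:conv}) is essential; without it one might worry that the infimum $m$ could fail to be achieved precisely because a minimizing sequence of eigenvalues escapes to $0$. Everything else is routine compactness and continuity.
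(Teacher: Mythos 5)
Your argument is correct and follows essentially the same route as the paper, which simply deduces the corollary from the closedness of $\Si[\FF]$ (Corollary \ref{c:conv}(i)) together with the continuity and coercivity of $d_\Ic$; your unpacked version (minimizing sequence, H-compactness of $\Mabsym^2$, H-closedness of $\FF$, Theorem \ref{t:c} for $\om_\infty\neq 0$, and $0\in\Si[\FF]$ unconditionally) is exactly what underlies that citation. Two cosmetic points: the bound on $|\om_n|$ should involve $\max(|\vphi_-|,|\vphi_+|)$ rather than $\mathrm{diam}(\Ic)$, and only part (i) of Corollary \ref{c:conv} is actually needed here, part (ii) being reserved for optimization with the zero eigenvalue excluded.
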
  
\begin{proof}
The corollary follows immediately from statement (i) of Corollary \ref{c:conv}.
\end{proof}

\begin{remark}\label{r:Gclosure}
 H-closed feasible families are typically  obtained in the homogenization theory as H-closures of composite structures consisting of several materials available for fabrication
\cite{T85,MT85,KS86,LC86,D93,R01,A02,T09,M23} (see also the bibliography in the monographs \cite{D93,C00,A02,T09,M23}). The description of H-closures in many practical situations was reduced by  Dal Maso \& Kohn (unpublished work, see  \cite{R01,A02})  to the $G$- and $G_\theta$-closure problems of periodic homogenization.
 For the case of two-phase composites, an explicit solution for the conductivity $G_\theta$-closure problem
was obtained independently by Murat \& Tartar \cite{MT85,T85} and Lurie \& Cherkaev \cite{LC86} (see also the references in \cite{D93,A02}). With the use of this explicit solution, it is possible to describe explicitly 
the H-closure for the typical idealized models of a vacuum-silicon photonic crystal, as it is done in the following example.
\end{remark}

\begin{ex} \label{ex:PhC1} (i)
Composite structures consisting of two materials with  isotropic  (relative magnetic)
permeabilities $\wh \mu_1, \wh \mu_2 \in \RR_+$ and isotropic (relative dielectric) permittivities $\ep_1, \ep_2 \in \RR_+$  are modelled by the two disjoint measurable subsets $\Om_1$ and $\Om_2 = \Om \setminus \Om_1$ of $\Om$.
In the natural (Heaviside–Lorentz) system of units the resulting permeability and permittivity matrix-functions are $\mu (x) = (\wh \mu_1 \chi_{\Om_1} (x)  + \wh \mu_2 \chi_{\Om_2} (x)) I_{\RR^3}$ and $\vep (x) = (\ep_1 \chi_{\Om_1} (x)  + \ep_2 \chi_{\Om_2} (x)) I_{\RR^3}$, resp.,
where $\chi_S (\cdot)$ is the indicator-function of a set $S$ (i.e., $\chi_S (x) =1$ if $x \in S$, and $\chi_S (x) = 0 $ if $x \not \in S$). A typical model of a photonic crystal takes the values
$\wh \mu_2 = 1$, $\ep_2 \approx 11.68$ for the 'silicon region' $\Om_2$ (the conductivity and the magnetic susceptibility of silicon are neglected),  and the values $\wh \mu_1 = \ep_1 =1$ for the remaining 'vacuum region' $\Om_1 = \Om\setminus \Om_2$. 
Denoting the family of the corresponding structures by $\FF^0$, one gets
$\FF^0 = \left\{ \{\vep,\mu\} \ : \ \mu \equiv I_{\RR^3}, \ \vep \in \FF_{1,\ep_2}\right\}$,
where 
\[
 \FF_{\ep_1,\ep_2} := \{ \vep (\cdot) =  \ep_1 I_{\RR^3} + (\ep_2 - \ep_1) \chi_{\Om_2} (\cdot) I_{\RR^3} \ : \ \Om_2 \subseteq \Om \ \text{is measurable} \}.
\]
Taking the H-closure of $\FF^0$ (w.r.t. the metric $\wh \rho _\Hr$), we obtain  the corresponding relaxed feasible family $\FF =  \bigl\{ \{\vep,I_{\RR^3}\} \ : \  \vep \in \overline{\FF}^\Hr_{\ep_1,\ep_2}\bigr\} $, which can be used in Corollary \ref{c:Opt}. 
Here $\overline{\FF}^\Hr_{\ep_1,\ep_2}$ is the H-closure of $\FF_{\ep_1,\ep_2}$ (w.r.t. the metric $\rho_\Hr$). That is, $\overline{\FF}^\Hr_{\ep_1,\ep_2}$ is the H-closure in the usual 'conductivity' sense of \cite{MT78,T85,MT85,A02}.
A  descriptions of the family $\overline{\FF}^\Hr_{\ep_1,\ep_2}$ can be obtained \cite[Section 11.2.1]{C00}  by taking the union of the  $G_\theta$-closures (with prescribed ratios of materials) explicitly derived  in \cite{MT85,T85,LC86}. Namely, \quad
$
\overline{\FF}^\Hr_{\ep_1,\ep_2} = \{ \vep \in L^\infty (\Om,\MM^\sym_{\ep_1,\ep_2}) \ : \vep (x) \in \bigcup_{\theta \in [0,1]} \MM_\theta \ \text{ for a.a.  } x \in \Om \},
$
where the sets $\MM_\theta  \subseteq \MM^\sym_{\ep_1,\ep_2} $ introduced in \cite{MT85,LC86} are defined in the following way.
Let us denote by $\Tr \Ac = \sum_{j=1}^3 \Ac_{jj}$ the trace of a matrix  $\Ac \in \RR^{3 \times 3}$,  by '$\le $' the standard partial order in the space $\RR^{3 \times 3}_\sym$ of symmetric matrices, by $\la_\theta^- = (\theta \ep_1^{-1}+ (1-\theta)\ep_2^{-1})^{-1}$ and $\la_\theta^+ = \theta \ep_1 + (1-\theta) \ep_2$ the weighted  harmonic and arithmetic means, resp., of $\ep_1$ and $\ep_2$. Then, for each $\theta \in [0,1]$, the set  $\MM_\theta$  consists of  all symmetric matrices $\Ac \in \RR^{3 \times 3}_\sym$ such that 
$\la_\theta^- I_{\RR^3} \le \Ac \le \la_\theta^+ I_{\RR^3},$ \quad $\Tr ( (\Ac - \ep_1 I_{\RR^3})^{-1}) \le \frac1{\la_\theta^- - \ep_1} + \frac2{\la_\theta^+ - \ep_1},$ \quad \\
 and \quad  $\Tr ((\ep_2 I_{\RR^3} - \Ac)^{-1}) \le \frac1{\ep_2 - \la_\theta^-} + \frac2{\ep_2 - \la_\theta^+} $. 

(ii) We do not know if real $\om \neq 0$ can be an achievable eigenvalue over 
$\FF =  \bigl\{ \{\vep,I_{\RR^3}\} \ : \  \vep \in \overline{\FF}^\Hr_{\ep_1,\ep_2}\bigr\} $ for impedance operators $Z$ with invertible real parts $\Re Z$ (see the assumption  \eqref{e:Zsec} below).
Section \ref{ss:uc} reduces this question to the unique continuation property by means of Proposition \ref{p:real}. However, it follows from Theorem \ref{t:EigFree} that there exists a maximal open interval $\Ic_{\max} = (0,\vphi_+^{\max}) \subseteq \RR_+$ with $ \vphi^{\max}_+ \in (0,+\infty]$ that does not contain achievable over $\FF$ eigenvalues. For any $\Ic \subset \Ic_{\max}$, the $d_\Ic$-minimization  problem considered above is nondegenerate in the sense that $ \min\limits_{\om \in \Si [\FF] }  d_\Ic (\om) >0$. 
\end{ex}

\begin{remark}
For many applied problems, the zero eigenvalue of operators $M_{\imp,Z}^{\ve,\mu}$ has no special significance. The part (ii) of Corollary \ref{c:conv} allows one to exclude the zero eigenvalue from the $d_\Ic$-optimization problem, see Section \ref{s:discu}. Some extreme cases of such optimization problems are demonstrated by Corollary \ref{c:OptSA} below.
\end{remark}

Assume that,  in the Hilbert space $\LLt$,
\begin{equation} \label{e:iZsa}
\text{
one of the operators  $\ii Z$ or $(-\ii) Z$ is  positive definite, selfadjoint,  and  bounded.
}
\end{equation}
Then, Maxwell operators $M_{\imp,Z}^{\ve,\mu}$ are selfadjoint \cite{EK22} (this follows also from Proposition \ref{p:weak} below). Note that \eqref{e:iZsa} implies  \eqref{e:Z1}-\eqref{e:Z2}. By Theorem \ref{t:DiscSp}, $\sigma(M_{\imp,Z}^{\ve,\mu})$ consists of  eigenvalues.

\begin{ex}[superconductor approximation]
The extremal case of Leontovich boundary condition $   \n   \times \Ebf   = - \ii c \Hbf_\Tan $ with a constant $c\in \RR_+$ fits \eqref{e:iZsa} and corresponds to a model with a superconductor encircling $\Om$, see  Landau, Lifshitz \& Pitaevskii \cite[Section 87]{LL84}.  Note that  cooled superconducting niobium optical cavities  were used by Goy, Raimond, Gross \&  Haroche in experiments demonstrating the Purcell effect of enhanced spontaneous emission \cite{H13}.
\end{ex}

\begin{corollary}[optimization of positive and negative eigenvalues] \label{c:OptSA}
Let an impedance operator $Z$ satisfying \eqref{e:iZsa} be fixed.
Let $\FF$ be an H-closed subset of $\Mabsym^2$. Then:
\item[(i)] A minimal  positive eigenvalue is achieved over $\FF$ (in the sense that there exists $\om_*^+ >0$ and  $\{\ve_*,\mu_*\} \in \FF$ such that $\om_*^+ \in \si (M_{\imp,Z}^{\ve_*,\mu_*}) $ and $ \om_*^+ = \min \{\om > 0 \ : \ \om \in \Si [\FF] \}$).
\item[(ii)] Analogously, a maximal negative eigenvalue $\om_*^-$ is achieved over $\FF$.
\end{corollary}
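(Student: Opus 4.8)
The plan is to deduce Corollary \ref{c:OptSA} directly from Corollary \ref{c:conv}(ii) together with the fact, noted just before the statement, that under hypothesis \eqref{e:iZsa} the operator $M_{\imp,Z}^{\ve,\mu}$ is selfadjoint with purely discrete nonzero spectrum (Theorem \ref{t:DiscSp}). Thus $\Si[\FF] \subseteq \RR$, and $\Si[\FF]\setminus\{0\}$ is a closed subset of $\RR$ by Corollary \ref{c:conv}(ii). The remaining work is to show that the infimum of the positive part of this set is attained (and symmetrically for the negative part).

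First I would observe that, to know a minimal positive eigenvalue exists, it suffices to know that the set $\Si[\FF]\cap\RR_+$ is nonempty and bounded away from $0$; then its infimum is a positive real number, which lies in $\Si[\FF]\setminus\{0\}$ by closedness, hence is itself an achievable eigenvalue, and the corresponding minimizing material pair exists because $\FF$ is H-closed and the convergence of eigenvalues along a minimizing sequence (Theorem \ref{t:c}) produces an eigenpair of a limit operator $M_{\imp,Z}^{\ve_*,\mu_*}$ with $\{\ve_*,\mu_*\}\in\FF$. Nonemptiness of $\Si[\FF]\cap\RR_+$ follows since for selfadjoint $M_{\imp,Z}^{\ve,\mu}$ restricted to $\Sem$ the spectrum is discrete and real, and it cannot be contained in $(-\infty,0]$: indeed the quadratic form of the restricted Maxwell operator is indefinite (one sees nonzero eigenvalues of both signs, e.g. because $M_{\imp,Z}|_\Sem$ is selfadjoint with compact resolvent and its nonzero spectrum is symmetric about $0$ when $Z$ gives a selfadjoint operator — this is the standard fact that the Maxwell operator anticommutes with the swap-and-conjugate-type symmetry, so eigenvalues come in $\pm$ pairs). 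The boundedness away from $0$ is exactly the content of Theorem \ref{t:EigFree}: the punctured disc of radius $R=R(\al,\beta,\Om,Z)$ contains no eigenvalues of any $M_{\imp,Z}^{\ve,\mu}$ with $\ve,\mu\in\Mabsym$, hence a fortiori none over $\FF$. Therefore $\inf(\Si[\FF]\cap\RR_+)\ge R>0$.

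Assembling this: set $\om_*^+:=\inf\{\om>0:\om\in\Si[\FF]\}$. By the above this infimum is over a nonempty set and is $\ge R$, so $\om_*^+\in[R,\infty)$. Since $\Si[\FF]\setminus\{0\}$ is closed in $\CC$ (Corollary \ref{c:conv}(ii)) and $\om_*^+$ is a limit of points of $\Si[\FF]\setminus\{0\}$ distinct from $0$, we get $\om_*^+\in\Si[\FF]$, i.e. $\om_*^+\in\si(M_{\imp,Z}^{\ve_*,\mu_*})$ for some $\{\ve_*,\mu_*\}\in\FF$, which is statement (i). Statement (ii) is obtained by the symmetric argument applied to $\{\om<0:\om\in\Si[\FF]\}$, whose supremum $\om_*^-$ satisfies $\om_*^-\le -R<0$ and lies in the closed set $\Si[\FF]\setminus\{0\}$; alternatively one may simply invoke the $\pm$-symmetry of the spectra and reduce (ii) to (i).

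The only genuinely nontrivial point — everything else being a soft compactness/closedness argument using the already-proven Theorems \ref{t:c}, \ref{t:DiscSp}, \ref{t:EigFree} and Corollary \ref{c:conv} — is the verification that $\Si[\FF]\cap\RR_+$ is nonempty, i.e. that at least one material pair in the H-closed family $\FF$ produces a positive eigenvalue. I expect this to be the main obstacle, and I would handle it by the eigenvalue-symmetry remark: for $Z$ satisfying \eqref{e:iZsa} the restricted selfadjoint operator $M_{\imp,Z}^{\ve,\mu}|_\Sem$ has spectrum symmetric with respect to $0$ (the map $\{\Ebf,\Hbf\}\mapsto\{-\overline{\Hbf},\overline{\Ebf}\}$, or the appropriate real-structure conjugation, conjugates the operator to its negative while preserving $\Sem$ and the impedance condition), so its nonzero eigenvalues come in pairs $\pm\om$; since the nonzero spectrum is nonempty — because $M_{\imp,Z}^{\ve,\mu}|_\Sem$ has compact resolvent, hence infinitely many eigenvalues accumulating only at $\pm\infty$, and they cannot all be $0$ — there is a positive one. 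This gives $\si(M_{\imp,Z}^{\ve,\mu})\cap\RR_+\neq\varnothing$ for every admissible $\{\ve,\mu\}$, in particular for those in $\FF$ (which is nonempty by assumption), completing the argument.
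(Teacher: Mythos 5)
Your overall skeleton---closedness of $\Si[\FF]\setminus\{0\}$ from Corollary \ref{c:conv}(ii), positivity of the infimum from Theorem \ref{t:EigFree}, attainment by the H-compactness/Theorem \ref{t:c} machinery---matches the paper's, and you correctly isolate the one substantive point: that $\Si[\FF]\cap\RR_+$ and $\Si[\FF]\cap\RR_-$ are both nonempty. The gap is precisely there. The symmetry you invoke is not available: the map $\{\Ebf,\Hbf\}\mapsto\{-\overline{\Hbf},\overline{\Ebf}\}$ interchanges the roles of $\Ebf$ and $\Hbf$ and hence conjugates $M_{\imp,Z}^{\ve,\mu}$ to the operator with $\ve$ and $\mu$ swapped, so it does not preserve the operator unless $\ve=\mu$. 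The natural alternatives also fail: the linear map $\{\Ebf,\Hbf\}\mapsto\{\Ebf,-\Hbf\}$ anticommutes with the Maxwell differential expression but turns the boundary condition $\n\times\Ebf=Z\Hbf_\Tan$ into $\n\times\Ebf=-Z\Hbf_\Tan$, and the antilinear conjugation $\{\Ebf,\Hbf\}\mapsto\{\overline{\Ebf},\overline{\Hbf}\}$, which sends a real eigenvalue $\om$ to $-\om$ on the level of the differential equations, sends the boundary condition to the one with $-Z$ in place of $Z$ (since $Z=\mp\ii S$ under \eqref{e:iZsa}). Either way one only obtains the relation $\si(M_{\imp,Z}^{\ve,\mu})=-\si(M_{\imp,-Z}^{\ve,\mu})$ between two \emph{different} operators, not a $\pm$-symmetry of a single spectrum. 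Your fallback observation---that $M_{\imp,Z}^{\ve,\mu}|_{\Sem}$ is selfadjoint with compact resolvent on an infinite-dimensional space and therefore has infinitely many eigenvalues---shows only that some nonzero eigenvalue exists; it does not exclude that all of them have one sign, in which case one of the statements (i), (ii) would fail.

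The paper closes this step by an elementary and sign-symmetric device that you should adopt (or else actually prove the spectral symmetry, which I do not see how to do): every $M_{\imp,Z}^{\ve,\mu}$ is a selfadjoint extension of the common symmetric operator $\Mmin$ with domain $H_0(\curlm,\Om)^2$, whose quadratic form equals $2 \im (\nabla\times\Ebf,\Hbf)_\Om$ and is indefinite---comparing $\Phi=\{\Ebf,\Hbf\}$ with $\Phi=\{\Hbf,\Ebf\}$ flips its sign. A selfadjoint operator whose quadratic form takes a negative value on its domain cannot have spectrum contained in $[0,\infty)$, and likewise with signs reversed; hence $\si(M_{\imp,Z}^{\ve,\mu})\cap\RR_+\neq\varnothing$ and $\si(M_{\imp,Z}^{\ve,\mu})\cap\RR_-\neq\varnothing$ for \emph{every} admissible pair, and Theorem \ref{t:DiscSp} upgrades these spectral points to eigenvalues. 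With that substitution the rest of your argument goes through as written.
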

\begin{proof}
We fix  $\{\ve,\mu\} $ for a time being and show that $\si (M_{\imp,Z})$ contains both positive and negative eigenvalues.
The closed symmetric  Maxwell operator $\Mmin$ associated with  the boundary conditions  $0 =   \n   \times \Ebf   =  \Hbf_\Tan $ was defined in Section \ref{ss:Disc}. The symmetric sesquilinear form $(\Mmin \ \Phi, \Psi )_{\Le} $ is defined for all $\Phi,\Psi \in H_0 (\curlm,\Om)^2$,  and in the case $\Phi = \Psi = \{\Ebf,\Hbf\}$ generates the quadratic form 
$2 \im  (\nabla \times \Ebf , \Hbf)_\Om = - 2 \im  (\nabla \times \Hbf , \Ebf)_\Om $, which is not nonnegative and not nonpositive (this can be seen by the comparison of the values of this quadratic form for $\Phi=\{\Ebf , \Hbf\}$ and $\Phi = \{\Hbf , \Ebf\}$). Under assumption \eqref{e:iZsa},  $M_{\imp,Z}$ is a selfadjoint extension of $M_{0,0}$. 
Thus, $\si (M_{\imp,Z}) \cap \RR_\pm \neq \{\varnothing\}$. Theorem \ref{t:DiscSp} implies that  $\si (M_{\imp,Z}) $ consist of eigenvalues.
Corollary \ref{c:conv} (ii) concludes the proof.
\end{proof}

\subsection{Quantification of nonunique continuation via extremal real eigenvalues}
\label{ss:uc}

For a bounded operator $T:X\to X$ in a Hilbert space $X$, we denote by 
$\re T = \frac12 (T+T^*)$ its real part, and by $\im T = \frac1{2i} (T-T^*)$ its imaginary part.  
Assume that, additionally to \eqref{e:Z1}-\eqref{e:Z2}, the boundary condition 
$   \n   \times \Ebf   = Z  \Hbf_\Tan $ satisfies the assumption that
\begin{gather} \label{e:Zsec}
\text{the real part $\re Z$ of the impedance operator $Z$ is injective (i.e., $\ker (\re Z) = \{0\}$).}
\end{gather}

\begin{prop} \label{p:real}
Suppose \eqref{e:Z1}, \eqref{e:Z2}, and \eqref{e:Zsec}. Let  $\om \in \RR$ be an eigenvalue of $M_{\imp,Z}$ and let $\{\Ebf , \Hbf \}$ be the corresponding eigenfield. Then $\{\Ebf , \Hbf \}$ is a nontrivial solution of the eigenproblem  
 \begin{equation}\label{e:EPM00}
     \ii  \nabla \times \Hbf  = \omega \ve \Ebf, \qquad -\ii  \nabla \times \Ebf = \omega \mu \Hbf , \qquad 0 =\n \times \Ebf,  \qquad 0  = \Hbf_\Tan.
     \end{equation}
\end{prop}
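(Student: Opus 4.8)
The plan is to test the eigenvalue equations against the eigenfield itself and exploit the boundary condition together with the reality of $\om$. First I would record that, since $\{\Ebf,\Hbf\}\in D(M_{\imp,Z})\subseteq\Himp^2$ by Theorem~\ref{t:b}, the tangential traces $\n\times\Ebf$, $\n\times\Hbf$, $\Ebf_\Tan$, $\Hbf_\Tan$ all lie in $\LLt$, so the integration-by-parts (Green) formula for $\curlm$ on $\Om$ is available in the classical $L^2$ form: $(\nabla\times\Hbf,\Ebf)_\Om-(\Hbf,\nabla\times\Ebf)_\Om = \langle \n\times\Hbf,\Ebf_\Tan\rangle_{\paOm}$ (up to a sign/convention I would fix). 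Pairing the first eigen-equation with $\Ebf$ in $L^2(\Om,\CC^3)$ gives $\ii(\nabla\times\Hbf,\Ebf)_\Om=\om(\ve\Ebf,\Ebf)_\Om$, and pairing the second with $\Hbf$ gives $-\ii(\nabla\times\Ebf,\Hbf)_\Om=\om(\mu\Hbf,\Hbf)_\Om$; both right-hand sides are real multiples of real numbers since $\ve,\mu$ are symmetric positive and $\om\in\RR$.

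The key computation is the quantity $\langle Z\Hbf_\Tan,\Hbf_\Tan\rangle_{\paOm}$. Using the boundary condition $\n\times\Ebf=Z\Hbf_\Tan$ and the unitarity of $\vbf\mapsto\n\times\vbf$ on $\LLt$ noted in Section~\ref{ss:MS}, I would rewrite the boundary pairing arising from Green's formula so that the imaginary part of the energy balance becomes (a constant times) $\re\langle Z\Hbf_\Tan,\Hbf_\Tan\rangle_{\paOm}$. Concretely: add the conjugate of the second paired equation to the first, or better, combine $\ii(\nabla\times\Hbf,\Ebf)_\Om$ and its relation to $(\Hbf,\nabla\times\Ebf)_\Om$ via Green's formula; since $\om(\ve\Ebf,\Ebf)_\Om$ and $\om(\mu\Hbf,\Hbf)_\Om$ are real, taking real/imaginary parts of the resulting identity forces the boundary term $\re\langle Z\Hbf_\Tan,\Hbf_\Tan\rangle_{\paOm}=0$, i.e. $((\re Z)\Hbf_\Tan,\Hbf_\Tan)_{\paOm}=0$. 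Because $\re Z$ is a bounded accretive (hence nonnegative) self-adjoint operator, this gives $(\re Z)^{1/2}\Hbf_\Tan=0$, so $\Hbf_\Tan\in\ker(\re Z)$, and assumption~\eqref{e:Zsec} yields $\Hbf_\Tan=0$. The boundary condition then immediately gives $\n\times\Ebf=Z\Hbf_\Tan=0$ as well. Thus $\{\Ebf,\Hbf\}$ solves \eqref{e:EPM00}, and it is nontrivial because it has positive energy by the definition of eigenfield.

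The main obstacle I anticipate is purely bookkeeping: getting the signs and the unitary identification right in Green's formula so that the boundary contribution is exactly $\pm c\,\langle Z\Hbf_\Tan,\Hbf_\Tan\rangle_{\paOm}$ with a sign that makes its real part appear (and not, say, its imaginary part). This hinges on the identity relating $\langle\n\times\Hbf,\Ebf_\Tan\rangle_{\paOm}$ to $\langle\Hbf_\Tan,\n\times\Ebf\rangle_{\paOm}$ via the unitary $\n\times(\cdot)$, and on using $\n\times\Ebf=Z\Hbf_\Tan$ at the right moment. Once the boundary term is correctly identified, the rest is immediate from accretivity and \eqref{e:Zsec}. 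A secondary point is to make sure Green's formula is legitimately applicable in the stated $L^2$ form; this is precisely guaranteed by Theorem~\ref{t:b}(ii), which places $\{\Ebf,\Hbf\}$ in $\Himp^2$ rather than merely $H(\curlm,\Om)^2$, so that all four tangential traces are genuine $\LLt$-functions and the duality pairings collapse to inner products in $\LLt$.
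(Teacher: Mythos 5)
Your argument is correct, but it takes a different route from the paper. The paper deduces the result abstractly: by Proposition \ref{p:mDRed}(ii) (the Sz.-Nagy--Foias decomposition into selfadjoint and completely nonselfadjoint parts), an eigenfield of the m-dissipative operator $M_{\imp,Z}$ with real eigenvalue is automatically an eigenfield of the adjoint as well, so it lies in $D(M_{\imp,Z})\cap D(M_{\imp,Z}^*)$; combining the two boundary conditions from Proposition \ref{p:weak}(i), namely $\n\times\Ebf=Z\Hbf_\Tan$ and $\n\times\Ebf=-Z^*\Hbf_\Tan$, gives $(Z+Z^*)\Hbf_\Tan=0$, and \eqref{e:Zsec} finishes the proof. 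You instead run the quadratic-form (energy-balance) computation directly: since $\om\in\RR$, $(M_{\imp,Z}\Phi,\Phi)_\Le=\om\|\Phi\|_\Le^2$ is real, and the Green formula for $\curlm$ (legitimate in $L^2$ form because $D(M_{\imp,Z})\subseteq\Himp^2$) identifies its imaginary part with $-\Re(Z\Hbf_\Tan,\Hbf_\Tan)_\paOm$, whence $((\re Z)\Hbf_\Tan,\Hbf_\Tan)_\paOm=0$, $(\re Z)^{1/2}\Hbf_\Tan=0$, and \eqref{e:Zsec} gives $\Hbf_\Tan=0$, then $\n\times\Ebf=Z\Hbf_\Tan=0$. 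This is essentially the dissipativity computation of Appendix \ref{a:mD} specialized to an eigenfield, so it is more elementary and self-contained (no need for the Sz.-Nagy--Foias machinery nor the description of $D(M_{\imp,Z}^*)$); the paper's route, on the other hand, is shorter given the propositions already in place and yields the extra structural information that the eigenfield belongs to $D(M_{\imp,Z}^*)$ and to the maximal selfadjoint part, which is what Remark \ref{r:realEig} exploits. Two small corrections: the inclusion $D(M_{\imp,Z})\subseteq\Himp^2$ is part of the definition of the domain, not a consequence of Theorem \ref{t:b} (that theorem gives the norm estimate, i.e.\ continuity of the embedding, which you do not actually need here); and the sign bookkeeping you flag is exactly the identity $2\Im(M_{\imp,Z}\Phi,\Phi)_\Le=-2\Re(\n\times\Ebf,\Hbf_\Tan)_\paOm$ already recorded in Appendix \ref{a:mD}, so it poses no real obstacle.
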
     

The proof of this proposition is given in Section \ref{s:real}. It is based on the decomposition of $M_{\imp,Z}$ into selfadjoint and completely nonselfadjoint parts (see \cite{SFBK10} and Proposition \ref{p:mDRed} below). 

In the case of the Leontovich boundary condition $\n \times \Ebf = a \Hbf_\Tan$,  \eqref{e:Zsec} is equivalent to 
\begin{gather} \label{e:Rea}
\text{$\re a(x)>0$ a.e. on $\pa \Om$.}
\end{gather}

Under the Lipschitz continuity, we always mean the uniform Lipschitz continuity. 

\begin{corollary} \label{c:NoReala}
Assume that the impedance coefficient $a(\cdot)$ satisfies \eqref{e:aLinf}, \eqref{e:aCo}, and \eqref{e:Rea}. 
Assume that the material parameters $\ve$ and $\mu$ are piecewise Lipschitz in the following sense:
\begin{gather}\label{e:Lip11}
\text{there exist $N \in \NN$ and open sets $\Om_n \subseteq \Om$, $n=1,\dots,N$, such that $  \overline \Om  = \bigcup_{n =1}^N \overline{\Om}_n$,}
 \\
\text{the boundaries $\pa \Om_n$ are Lipschitz continuous,} 
\label{e:Lip12}
\\
\text{and the restrictions $\ve |_{\Om_n}$ and $\mu |_{\Om_n}$ are Lipschitz continuous in $\Om_n$ for all $n$.} \label{e:Lip2}
\end{gather}
Then $\RR \cap \si (M_{\imp,a}) = \{0\}$ for the Maxwell-Leontovich operator $M_{\imp,a}$.
\end{corollary}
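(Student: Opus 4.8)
The plan is to argue by contradiction. Suppose some $\om\in\RR\setminus\{0\}$ belongs to $\si(M_{\imp,a})$; I will show that it produces a nontrivial Maxwell eigenfield whose \emph{both} tangential traces vanish on $\pa\Om$, and then that such a field must vanish identically by the unique continuation property for the Maxwell system with piecewise Lipschitz coefficients.

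First I would reduce to Proposition \ref{p:real}. Since $a(\cdot)$ satisfies \eqref{e:aLinf}, \eqref{e:aCo} and \eqref{e:Rea}, the operator $Z=\mf_a$ satisfies \eqref{e:Z1}-\eqref{e:Z2} (Example \ref{e:LMop}), and \eqref{e:Rea} is exactly \eqref{e:Zsec} for $Z=\mf_a$; hence Theorem \ref{t:DiscSp} and Proposition \ref{p:real} are available. By Theorem \ref{t:DiscSp}(ii), $\om$ is an eigenvalue; fix a corresponding eigenfield $\{\E,\H\}$, so that $\|\{\E,\H\}\|_{\Le}^2>0$. Proposition \ref{p:real} gives \eqref{e:EPM00}: $\ii\nabla\times\H=\om\ve\E$ and $-\ii\nabla\times\E=\om\mu\H$ in $\Om$, together with $\n\times\E=0$ and $\H_\Tan=0$ on $\pa\Om$. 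As $\vbf\mapsto\n\times\vbf$ is unitary on $\LLt$ (Section \ref{ss:MS}), the relation $\H_\Tan=0$ is equivalent to $\n\times\H=0$, so in fact \emph{both} tangential traces $\n\times\E$ and $\n\times\H$ vanish on $\pa\Om$.

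Next I would pass to the whole of $\RR^3$. Extend $\E$, $\H$ by zero; since a field $u\in H(\curlm,\Om)$ with vanishing tangential trace has a zero-extension $\wt u\in H(\curlm,\RR^3)$ whose curl is the zero-extension of $\nabla\times u$, the relations $\n\times\E=\n\times\H=0$ imply that the zero-extensions $\wt\E,\wt\H$ solve $\ii\nabla\times\wt\H=\om\,\wt\ve\,\wt\E$ and $-\ii\nabla\times\wt\E=\om\,\wt\mu\,\wt\H$ in all of $\RR^3$, for \emph{any} uniformly elliptic extensions $\wt\ve,\wt\mu$ of $\ve,\mu$ (the values outside $\Om$ do not matter, since $\wt\E=\wt\H=0$ there). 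Using \eqref{e:Lip11}-\eqref{e:Lip2}, I extend each $\ve|_{\Om_n},\mu|_{\Om_n}$ Lipschitz-continuously across $\pa\Om$ and set $\wt\ve=\wt\mu=I_{\RR^3}$ far from $\overline\Om$; then $\wt\ve,\wt\mu$ are uniformly positive and piecewise Lipschitz on $\RR^3$, with finitely many Lipschitz interfaces (the hypersurfaces $\pa\Om_n$ and $\pa\Om$). Thus $\{\wt\E,\wt\H\}$ is a global $H(\curlm,\RR^3)^2$-solution of the Maxwell system with piecewise Lipschitz coefficients which vanishes identically on the nonempty open set $\RR^3\setminus\overline\Om$.

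Finally I would invoke unique continuation: within each Lipschitz region the weak unique continuation property for the stationary anisotropic Maxwell system holds, via Carleman estimates \cite{EY06} (see also \cite{NW12}), and across the interfaces it is propagated using the transmission conditions automatically satisfied by a global $H(\curlm)$-solution (continuity of the tangential traces of $\E,\H$ and, since $\ve\E$ and $\mu\H$ are divergence-free, of their normal traces); for coefficients that are merely piecewise regular this is precisely the setting treated in \cite{BCT12}. As $\overline\Om=\bigcup_{n=1}^N\overline{\Om}_n$ with $\Om$ connected, and every region adjacent to $\pa\Om$ is adjacent to the exterior region where $\wt\H\equiv0$, a finite chain of such steps yields $\wt\E=\wt\H\equiv0$ on $\RR^3$, hence $\E=\H=0$ in $\Om$ — contradicting $\|\{\E,\H\}\|_{\Le}^2>0$. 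So no nonzero real number lies in $\si(M_{\imp,a})$; since $0$ always does (because $\GG_{0}(\Om)\subseteq\ker M_{\imp,a}$), we obtain $\RR\cap\si(M_{\imp,a})=\{0\}$. The main obstacle is exactly this last step: unique continuation for the Maxwell system with only piecewise Lipschitz coefficients, and in particular the propagation of vanishing across the interfaces $\pa\Om_n$. That merely continuous coefficients are insufficient is shown by the nonunique-continuation example of \cite{D12}, which is what forces the Lipschitz regularity hypothesis \eqref{e:Lip11}-\eqref{e:Lip2}.
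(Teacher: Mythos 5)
Your proposal is correct and follows essentially the same route as the paper: the paper proves the more general Corollary \ref{c:NoReal} by combining Theorem \ref{t:DiscSp} (to pass from spectrum to eigenvalues), Proposition \ref{p:real} (to get $0=\n\times\Ebf=\Hbf_\Tan$), the zero-extension of the eigenfield to $\RR^3$ with $\ve,\mu$ extended by $I_{\RR^3}$, and the unique continuation theorem \cite[Theorem 2.1]{BCT12} for piecewise regular coefficients. Your additional remarks on transmission conditions and chaining across the interfaces $\pa\Om_n$ are a sketch of what is already packaged inside the cited result of \cite{BCT12}, so no new ingredient is introduced or missing.
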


This corollary follows from a more general result of Corollary \ref{c:NoReal}, which is  proved in Section \ref{s:real} with the use of the unique continuation results of \cite{BCT12} and Theorem \ref{t:DiscSp}.

\begin{remark} \label{r:realEig}
The statement that $\{\Ebf , \Hbf \}$ is a nontrivial solution to eigenproblem  \eqref{e:EPM00} is equivalent to the statement that $\{\Ebf , \Hbf \}$ is an eigenfield corresponding to an eigenvalue $\om$ of the symmetric Maxwell operator $M^{\ve,\mu}_{0,0}$ of Section \ref{ss:Disc}. Operators $M^{\ve,\mu}_{\imp,Z}$ are extensions of $M^{\ve,\mu}_{0,0}$. So Proposition \ref{p:real} states that, under the additional assumption \eqref{e:Zsec} on $Z$, a real number $\om$ is an eigenvalue of $\si(M^{\ve,\mu}_{\imp,Z})$ if and only if it is an eigenvalue of $M^{\ve,\mu}_{0,0}$.
\end{remark}

If only the Hölder $C^\ga$-regularity with $\ga<1$ is required from the material parameters  $\ve$ and 
$\mu$, there are counterexamples to the unique continuation property for Maxwell systems
\cite{D12}. 

\begin{remark} \label{r:nonUC}
The results of Demchenko \cite{D12} and the scaling properties of the Maxwell system imply that for every Lipschitz domain $\Om$ and every $\al>0$ there exists $\beta>\al$ such that, 
\begin{equation} \label{e:betaNUC}
\text{for certain $\vep,\mu \in L^\infty (\Om, \MM^\sym_{\al, \beta})$, eigenproblem \eqref{e:EPM00} has at least one nonzero real  eigenvalue.}
\end{equation} 
It is obvious from the structure of the Maxwell system that in this case $(-\om)$ is also an eigenvalue of \eqref{e:EPM00}. So one can assume without loss of generality that $\om>0$ in \eqref{e:betaNUC}.
Let 
\begin{gather} \label{e:betamin}
\text{
$\beta_{\min} (\al,\Om)$ be the infimum of all $\beta>\alpha$ such that  \eqref{e:betaNUC} holds.}
\end{gather}
\end{remark}

\begin{corollary}[extremal eigenvalue for nonunique continuation] \label{c:M00Eig}
Assume that $\al_0>0$ and $\beta_0> \beta_{\min} (\al_0,\Om)$ with $\beta_{\min}$ defined by \eqref{e:betamin}. 
Let us denote $\om_* = \om_* (\al_0,\beta_0,\Om)$ by
\[
\om_* = \inf \{\om>0 : \ \om \ \text{ is an eigenvalue of \eqref{e:EPM00}  for a certain $\{\ve,\mu\} \in L^\infty (\Om, \MM^\sym_{\al_0, \beta_0})^2$} \ \}.
\]
Then $\om_*>0$, and $\om_*$ is an achievable eigenvalue for eigenproblem  \eqref{e:EPM00}
over $\FF = L^\infty (\Om, \MM^\sym_{\al_0, \beta_0})^2$ (in the sense that there exists $\{\ve,\mu\} \in L^\infty (\Om, \MM^\sym_{\al_0, \beta_0})^2$
such that $\om_*$ is an eigenvalue of \eqref{e:EPM00}).
\end{corollary}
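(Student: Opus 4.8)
The plan is to recast eigenproblem \eqref{e:EPM00} inside the m-dissipative, discrete-spectrum framework of Section~\ref{ss:conv}, and then exploit the compactness of the H-topology together with the uniform eigenvalue-free annulus of Theorem~\ref{t:EigFree}. \emph{First} I would fix a constant $c\in\RR_+$ and work with the Maxwell--Leontovich operator $M^{\ve,\mu}_{\imp,c}$, i.e.\ $M_{\imp,Z}$ with $Z=\mf_c$ the multiplication by $c$ on $\LLt$. By Example~\ref{e:LMop} the constant coefficient $a\equiv c$ satisfies \eqref{e:aLinf}--\eqref{e:aCo}, so $Z$ obeys \eqref{e:Z1}--\eqref{e:Z2}; moreover $\re Z=c\,\mathrm{Id}$ is injective, so \eqref{e:Zsec} holds as well (equivalently, \eqref{e:Rea} holds for $a\equiv c$). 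This last point is precisely what puts Proposition~\ref{p:real} at our disposal and thus links the real spectrum of $M_{\imp,c}$ back to \eqref{e:EPM00}.

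\emph{Next}, take $\FF:=L^\infty(\Om,\MM^\sym_{\al_0,\beta_0})^2$, which is the whole metric space $(\Mabsym^2,\wrhoH)$ for the constants $\al=\al_0$, $\beta=\beta_0$, hence compact (Remark~\ref{r:Hcon} and \eqref{e:M2comp}) and in particular H-closed. Write $\Si[\FF]=\bigcup_{\{\ve,\mu\}\in\FF}\si(M^{\ve,\mu}_{\imp,c})$ and $P:=\{\om>0:\om\in\Si[\FF]\}$. By Theorem~\ref{t:DiscSp}(ii) each $\si(M^{\ve,\mu}_{\imp,c})$ consists of isolated eigenvalues, so every real point of $\Si[\FF]$ is a real eigenvalue of some $M^{\ve,\mu}_{\imp,c}$. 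I would then observe that, for $\om\neq0$, one has $\om\in\Si[\FF]$ if and only if $\om$ is an eigenvalue of \eqref{e:EPM00} for some $\{\ve,\mu\}\in\FF$: the ``if'' direction is immediate because $M_{\imp,c}$ extends the symmetric operator $M_{0,0}$ (as $D(M_{0,0})=H_0(\curlm,\Om)^2\subseteq D(M_{\imp,c})$), and the ``only if'' direction is exactly Proposition~\ref{p:real}. In particular $\om_*=\inf P$ with $\om_*$ as in the statement.

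To see that $P\neq\varnothing$ (equivalently $\om_*<\infty$) I would invoke the hypothesis $\beta_0>\beta_{\min}(\al_0,\Om)$: by the definition \eqref{e:betamin} of $\beta_{\min}$ there is $\beta\in(\al_0,\beta_0)$ for which \eqref{e:betaNUC} holds, and by Remark~\ref{r:nonUC} the corresponding nonzero real eigenvalue of \eqref{e:EPM00} may be chosen positive; since $\beta<\beta_0$ gives $\MM^\sym_{\al_0,\beta}\subseteq\MM^\sym_{\al_0,\beta_0}$, the associated pair $\{\ve,\mu\}$ lies in $\FF$, so this positive eigenvalue belongs to $P$. \emph{Finally}, Theorem~\ref{t:EigFree} yields a constant $R=R(\al_0,\beta_0,\Om,c)>0$ with $\si(M^{\ve,\mu}_{\imp,c})\cap\{0<|\om|<R\}=\varnothing$ for all $\{\ve,\mu\}\in\FF$, hence $P\subseteq[R,\infty)$; and Corollary~\ref{c:conv}(ii) gives that $\Si[\FF]\setminus\{0\}$ is closed in $\CC$, so $P=(\Si[\FF]\setminus\{0\})\cap[R,\infty)$ is a nonempty closed subset of $\RR$ bounded below. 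Therefore $P$ attains its infimum, i.e.\ $\om_*=\min P\in P$, which gives both $\om_*\ge R>0$ and the existence of $\{\ve,\mu\}\in L^\infty(\Om,\MM^\sym_{\al_0,\beta_0})^2$ for which $\om_*$ is an eigenvalue of \eqref{e:EPM00}.

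The main obstacle is the last step: the set of positive achievable eigenvalues is a priori only the positive part of the closed set $\Si[\FF]$, and such a part need not be closed near the origin, so without an extra ingredient the infimum defining $\om_*$ could fail to be attained, or could equal $0$. The uniform eigenvalue-free annulus of Theorem~\ref{t:EigFree} is exactly what excludes this and forces $\om_*>0$. A secondary point that must be handled carefully is the bridge between eigenproblem \eqref{e:EPM00} (the symmetric operator $M_{0,0}$) and the m-dissipative operator $M_{\imp,c}$ — this is why the impedance coefficient has to be taken with a positive (hence injective) real part, so that Proposition~\ref{p:real} applies and the results of Section~\ref{ss:conv}, all formulated for $M_{\imp,Z}$, become directly usable.
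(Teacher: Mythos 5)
Your proposal is correct and follows essentially the same route as the paper's proof: choose a constant impedance (the paper takes $Z=I_{\LLt}$, you take $Z=c\,\mf_1$ with $c>0$, which is the same idea), use Proposition \ref{p:real} together with Remark \ref{r:realEig} to identify the nonzero real achievable eigenvalues of $M_{\imp,Z}^{\ve,\mu}$ with those of eigenproblem \eqref{e:EPM00}, and then conclude via Corollary \ref{c:conv}(ii) (i.e.\ Theorems \ref{t:c} and \ref{t:EigFree}) that the positive achievable set is closed, bounded away from $0$, and nonempty by the hypothesis $\beta_0>\beta_{\min}(\al_0,\Om)$, so its infimum is attained. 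Your version merely makes explicit the nonemptiness step and the role of the eigenvalue-free annulus, which the paper leaves implicit.
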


The proof of this result is given in Section \ref{s:real}.  This corollary and its proof can be easily adapted 
to an arbitrary H-closed family $\FF \subset L^\infty (\Om, \MM^\sym_{\al_0, \beta_0})^2$ satisfying  the property that there exist at least one achievable over $\FF$ eigenvalue $\om \in \RR \setminus \{0\}$ of eigenproblem \eqref{e:EPM00}.


\section{Main technical tools}
\label{s:Tools}

\subsection{Hodge decompositions and compact embeddings}
\label{ss:Hodge}

This section provides the main technical tools for the proofs in Section \ref{s:Proofs}.
Let  $\xi  \in \Mabsym $. For $X \subseteq L^2 (\Omega,\CC^3)$, we denote $\xi X  := \{ \xi (\cdot) \vbf (\cdot) \ : \ \vbf  \in X \}$ and $\xi^{-1} X  := \{ (\xi (\cdot))^{-1} \vbf (\cdot) \ : \ \vbf  \in X \}$. 
Recall that  $ L_\xi^2 (\Omega,\CC^3) $ is the 'weighted' $L^2$-space defined in Section \ref{ss:Disc}.  
Let $\curlm_1:H^1 (\Om,\CC^3) \to L^2 (\Omega,\CC^3)$ be the continuous operator defined by $\curlm_1 : \vbf \mapsto \nabla \times \vbf$.  Let 
$H^1 (\Om,\CC^3) \ominus \ker \curlm_1$ \ be the orthogonal complement
of the kernel  \ 
$ \ker \curlm_1:= \{\vbf \in H^1 (\Om,\CC^3) : \nabla \times  \vbf = 0 \} $.

The range \quad $\ran \curlm_1 =  \curlm H^1 (\Om,\CC^3) = \{\nabla \times \vbf  \ : \ \vbf \in H^1 (\Om,\CC^3)  \}
$ \quad  of $\curlm_1$  is a closed subspace of $L^2 (\Omega,\CC^3)$ \cite{GR86,C96} (see also Appendix \ref{a:HdecOm}).
Let  $\KK_0 (\pa \Om)$ be the finite-dimensional space of locally constant $\CC$-valued functions  on $\pa \Om$.
The  finite-dimensional space 
\[ \HH_2 ( \Om,\xi) := 
 \{ \wbf = \nabla q \; : \ q\in H^1(\Om), \ 
 \nabla\cdot (\xi \nabla q)=0 \mbox{ in } \Om, \text{ and }
 q |_\paOm \in \KK_0 (\pa \Om) \}
\]
is a `weighted' analogue of the cohomology space in \cite[formula (104)]{C96}.

The `weighted' Helmholtz-Hodge decomposition of the following theorem is a more detailed version of one  particular case of \cite[Corollary 3.3]{PS22} (see also  \cite[Section 8.1]{ACL18}).
\begin{theorem}[cf. \cite{ACL18,PS22}] \label{t:HDecOm}
(i) The Hilbert space $L_\xi^2 (\Omega,\CC^3) $ admits the following orthogonal decomposition 
$L_\xi^2 (\Omega,\CC^3) \ = \ \grad H^1_0 (\Omega) \oplus \HH_2 (\Om, \xi) \oplus \xi^{-1} \curlm H^1 (\Om,\CC^3) $.
\item[(ii)] For every $\ubf \in L_\xi^2 (\Omega,\CC^3) $, there exist a unique triple
\begin{gather} \notag
\text{$p \in H^1_0 (\Omega)$, $\wbf \in \HH_2 ( \Om,\xi)$, $\vbf  \in H^1 (\Om,\CC^3) \ominus \ker \curlm_1$ \quad  such that \quad 
$\ubf = \nabla p + \wbf + \xi^{-1} \curlm \vbf ;$}
\\
\text{moreover, } \qquad \qquad
 \| p \|_{H^1 (\Omega)} + \| \wbf \|_{ L_\xi^2 (\Omega,\CC^3) } + \| \vbf \|_{H^1 (\Om,\CC^3) } \lesssim \| \ubf\|_{L_\xi^2 (\Omega,\CC^3) } \ .
\label{e:HodgeOm3}
\end{gather}
\end{theorem}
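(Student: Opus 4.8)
The plan is to obtain the three–term splitting in two stages and then extract the uniqueness and the quantitative estimate of (ii) from orthogonality. \emph{First stage.} By the weighted Helmholtz--Weyl decomposition \eqref{e:HWxi}, $L_\xi^2 (\Om,\CC^3) = H(\Div \xi 0,\Om) \oplus \grad H^1_0 (\Om)$, orthogonally for $(\cdot,\cdot)_{L^2_\xi (\Om,\CC^3)}$, so it remains to split $H(\Div \xi 0,\Om)$. Since $\xi \in \Mabsym$, multiplication by $\xi^{-1}$ is a linear homeomorphism of $L^2 (\Om,\CC^3)$ and the norms of $L^2 (\Om,\CC^3)$ and $L^2_\xi (\Om,\CC^3)$ are equivalent; as $\ran \curlm_1 = \curlm H^1 (\Om,\CC^3)$ is closed in $L^2 (\Om,\CC^3)$, the subspace $\xi^{-1} \curlm H^1 (\Om,\CC^3)$ is closed in $L_\xi^2 (\Om,\CC^3)$. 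Moreover $\Div (\xi \cdot \xi^{-1} \curlm \vbf) = \Div (\curlm \vbf) = 0$, so $\xi^{-1} \curlm H^1 (\Om,\CC^3) \subseteq H(\Div \xi 0,\Om)$, and $\HH_2 (\Om,\xi) \subseteq H(\Div \xi 0,\Om)$ by its definition. Thus (i) reduces to showing that $\HH_2 (\Om,\xi)$ is exactly the orthogonal complement of $\xi^{-1} \curlm H^1 (\Om,\CC^3)$ inside $H(\Div \xi 0,\Om)$.

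\emph{Identification of the middle space.} For $\wbf = \nabla q \in \HH_2 (\Om,\xi)$ and $\vbf \in H^1 (\Om,\CC^3)$, using that $\xi$ is real and symmetric, $(\nabla q, \xi^{-1} \curlm \vbf)_{L^2_\xi (\Om,\CC^3)} = (\xi \nabla q, \xi^{-1} \curlm \vbf)_\Om = (\nabla q, \curlm \vbf)_\Om$, and integration by parts turns this into $\int_{\paOm} q\, \overline{(\curlm \vbf) \cdot \n}\, \dd S$ (the volume term drops because $\Div (\curlm \vbf) = 0$); since $q|_{\paOm} \in \KK_0 (\paOm)$ is locally constant and the total flux of $\curlm \vbf$ through each connected component of $\paOm$ vanishes, the integral is zero, so $\HH_2 (\Om,\xi) \perp \xi^{-1} \curlm H^1 (\Om,\CC^3)$. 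Conversely, if $\ubf \in H(\Div \xi 0,\Om)$ is orthogonal to $\xi^{-1} \curlm H^1 (\Om,\CC^3)$, the same computation gives $(\ubf, \curlm \vbf)_\Om = 0$ for all $\vbf \in H^1 (\Om,\CC^3)$; testing first against $C^\infty_0 (\Om,\CC^3)$ and then against all of $H^1 (\Om,\CC^3)$ via the Green formula for $\curlm$ forces $\curlm \ubf = 0$ in $\Om$ and $\n \times \ubf = 0$ on $\paOm$. By the Hodge theory for Lipschitz domains (see \cite{C96}, and also \cite{ACL18,PS22}), a curl-free field with vanishing tangential trace is of the form $\nabla q$ with $q \in H^1 (\Om)$ and $q|_{\paOm} \in \KK_0 (\paOm)$; together with $\Div (\xi \ubf) = 0$ this places $\ubf$ in $\HH_2 (\Om,\xi)$. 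This proves (i); and $\HH_2 (\Om,\xi)$ is finite-dimensional because $\nabla q \mapsto q|_{\paOm}$ embeds it into $\KK_0 (\paOm)$ modulo constants, by uniqueness for the Dirichlet problem $\Div (\xi \nabla q) = 0$.

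\emph{Uniqueness and the bound \eqref{e:HodgeOm3}.} The three summands are mutually $(\cdot,\cdot)_{L^2_\xi (\Om,\CC^3)}$-orthogonal, so for $\ubf \in L_\xi^2 (\Om,\CC^3)$ the components $\nabla p$, $\wbf$, $\xi^{-1} \curlm \vbf$ are the orthogonal projections of $\ubf$ onto them; then $p \in H^1_0 (\Om)$ is unique (the only constant in $H^1_0 (\Om)$ being $0$), $\wbf \in \HH_2 (\Om,\xi)$ is unique, and since $\curlm_1$ restricts to a continuous bijection of $H^1 (\Om,\CC^3) \ominus \ker \curlm_1$ onto the closed space $\curlm H^1 (\Om,\CC^3)$, there is a unique $\vbf \in H^1 (\Om,\CC^3) \ominus \ker \curlm_1$ representing the last component; this yields the uniqueness in (ii). For the estimate, each projection has $L_\xi^2$-norm at most $\| \ubf \|_{L^2_\xi (\Om,\CC^3)}$, so $\| \wbf \|_{L^2_\xi (\Om,\CC^3)} \le \| \ubf \|_{L^2_\xi (\Om,\CC^3)}$; moreover $\| p \|_{H^1 (\Om)} \lesssim \| \nabla p \|_{L^2 (\Om)} \lesssim \| \nabla p \|_{L^2_\xi (\Om,\CC^3)} \le \| \ubf \|_{L^2_\xi (\Om,\CC^3)}$ by the Poincaré inequality on $H^1_0 (\Om)$ and norm equivalence; and $\| \vbf \|_{H^1 (\Om,\CC^3)} \lesssim \| \curlm \vbf \|_{L^2 (\Om)} \lesssim \| \xi^{-1} \curlm \vbf \|_{L^2_\xi (\Om,\CC^3)} \le \| \ubf \|_{L^2_\xi (\Om,\CC^3)}$ by the bounded inverse theorem applied to $\curlm_1 |_{H^1 (\Om,\CC^3) \ominus \ker \curlm_1}$, whose range is closed. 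Summing gives \eqref{e:HodgeOm3}.

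The crux is the middle step — the passage from curl-free fields with vanishing tangential trace to gradients of $H^1$-functions with locally constant boundary trace — which encodes the topology (cohomology) of the Lipschitz domain $\Om$; this is the point at which one must invoke, or carefully redo, the Lipschitz-domain Hodge theory of \cite{GR86,C96,ACL18,PS22}, all the remaining steps being routine closed-range and orthogonal-projection arguments.
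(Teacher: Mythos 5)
Your proof is correct, and its skeleton matches the paper's: split off $\grad H^1_0(\Om)$ by the weighted Helmholtz--Weyl decomposition \eqref{e:HWxi}, show $\xi^{-1}\curlm H^1(\Om,\CC^3)$ is a closed subspace of $H(\Div \xi 0,\Om)$, identify its orthogonal complement there with $\HH_2(\Om,\xi)$, and deduce (ii) from orthogonal projections, the Poincar\'e inequality, and the bounded inverse theorem applied to $\curlm_1$ restricted to $(\ker\curlm_1)^\perp$ — the last part is essentially identical to the paper's proof of (ii). Where you genuinely diverge is in how the middle space is pinned down. The paper quotes the vector-potential theorem of \cite{GR86} (Theorem I.3.4), i.e.\ the characterization of $\curlm H^1(\Om,\CC^3)$ as the divergence-free fields with zero normal flux through each connected component of $\paOm$, shows only the inclusion $\HH_2(\Om,\xi)\perp\xi^{-1}\curlm H^1(\Om,\CC^3)$, and closes the argument by a dimension count ($\dim\HH_2(\Om,\xi)=N-1$ versus codimension $\le N-1$). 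You instead prove both inclusions directly: the orthogonality computation (your reduction $(\ubf,\xi^{-1}\curlm\vbf)_{L^2_\xi}=(\ubf,\curlm\vbf)_\Om$ is right, since $\xi$ is real symmetric) plus the fact that the flux of $\curlm\vbf$ through each closed boundary component vanishes — the elementary direction of the same Girault--Raviart statement — and, for the converse, the dual Hodge-type characterization that a curl-free field with vanishing tangential trace is $\nabla q$ with $q|_{\paOm}\in\KK_0(\paOm)$, which you cite from the Lipschitz Hodge theory of \cite{C96,ACL18,PS22}. Both routes rest on one classical topological input of comparable depth; yours avoids the dimension count (and even the exact count $\dim\HH_2=N-1$, which the paper asserts), at the price of invoking the "Dirichlet harmonic field" characterization rather than the vector-potential theorem. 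Two cosmetic remarks: the boundary term in your integration by parts should be read as the $H^{1/2}(\paOm)$--$H^{-1/2}(\paOm)$ duality with the normal trace $\ga_\nr(\curlm\vbf)$ (as in the paper), since $\curlm\vbf$ is only an $L^2$ divergence-free field; and in the converse step the passage from $(\ubf,\curlm\vbf)_\Om=0$ for all $\vbf\in H^1(\Om,\CC^3)$ to $\n\times\ubf=0$ uses the Green formula for $\curlm$ together with the density of tangential projections of $H^1$-fields in the relevant trace space — worth a sentence, but standard.
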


For the convenience of the reader, the proof of Theorem \ref{t:HDecOm} is given in Appendix \ref{a:HdecOm}. 

Statement (i) of Theorem \ref{t:HDecOm} can be seen as a more detailed version of the Helmholtz-Weyl-type decomposition  \eqref{e:HWxi}. Indeed, Theorem \ref{t:HDecOm} (i) implies
\begin{equation}\label{e:Div0Dec}
\text{the $L_\xi^2 (\Omega,\CC^3)$-orthogonal decomposition} \quad H(\Div \xi 0,\Om) = \HH_2 (\Om, \xi) \oplus \xi^{-1} \curlm H^1 (\Om,\CC^3) .
\end{equation}

Note that the space $\KK_0 (\pa \Om)$ can be equivalently defined as the kernel of the Laplace-Beltrami operator 
$\De^{\pa \Om} : H^1 (\pa \Om) \to H^{-1} (\pa \Om)$.  For $s \in (0,1]$, the Sobolev-type boundary spaces $H^s (\pa \Om) = H^s (\pa \Om,\CC) $ are well-defined.
Introducing the Hilbert factor-space
$H^0_{\pa \Om}:= L^2 (\pa \Om) / \KK_0 (\pa \Om)$ and, for   $s \in (0,1]$, the factor-spaces  $H^s_{\pa \Om}:= H^s (\pa \Om) / \KK_0 (\pa \Om)$,
one can define the Hilbert space $H^{-s}_{\pa \Om}$ as the dual to $H^s_{\pa \Om}$ w.r.t. the pivot space $H^0_{\pa \Om}$. Note that $H^1_{\pa \Om} = H^1 (\pa \Om) / \KK_0 (\pa \Om)$
can be naturally identified with the orthogonal complement $H^1 (\pa \Om) \ominus \KK_0 (\pa \Om)$ of $\KK_0 (\pa \Om)$.

On $\pa \Om$, we consider the operator of tangential gradient $\grad_{\pa \Om}$ and the  surface vector  curl-operator  $\curlm_{\pa \Om} $ (for their definitions on various Sobolev spaces on $\pa \Om$, we refer to \cite{C96,BCS02,M04,EK22}). Since $\KK_0 (\pa \Om) = \ker \grad_{\pa \Om} = \ker \curlm_{\pa \Om}$, the operators $\grad_{\pa \Om}$ and   $\curlm_{\pa \Om} $ can be considered also as operators defined on the factor-spaces 
$H^s_{\pa \Om}$ with  $s \in [0,1]$ (see \cite{EK22}). The quadratic form 
$  (f|f)_1 = \| \grad_{\pa \Om} f \|^2_\paOm $ defined for $ f \in H^1_{\pa \Om} $ is closed in $H^0_{\pa \Om}$ (in the sense of \cite{Kato}). Using the representation theorems \cite[Theorems VI.2.1 and VI.2.23]{Kato}, one can define another version of the Laplace-Beltrami operator as a unique nonpositive selfadjoint operator $\De_{\pa \Om}$ in $H^0_{\pa \Om}$ such that 
$H^1_{\pa \Om} = D \left(|\De_{\pa \Om}|^{1/2}\right)$ and $(-\De_{\pa \Om} f|f)_{H^0_{\pa \Om}} =  (f|f)_1$ for all 
$f \in D(\De_{\pa \Om})$.
The operator $\De_{\pa \Om}$ is boundedly invertible, and there exists 
an orthonormal basis $\{u_k\}_{k=1}^{\infty}$ in $H^0_{\pa \Om}$ such that
$(-\De_{\pa \Om}) u_k = \la_k^2 u_k$ and $\si (\De_{\pa \Om}) = \{-\la_k^2 \}_{k=1}^{\infty}$, \  where  \ $\la_k >0$ for all \ $k \in \NN$.

We use the scale $H^s_{\De_{\pa \Om}}$, $s \in \RR$, of Hilbert spaces associated with $|\De_{\pa \Om}|^{1/2}$. In particular, for $s \ge 0$, we define  the Hilbert space $H^s_{\De_{\pa \Om}}$ as $D(|\De_\paOm|^{s/2} )$ equipped  with the graph norm, and define $H^{-s}_{\De_{\pa \Om}}$ as the dual to $H^s_{\De_{\pa \Om}}$ w.r.t. the pivot space $H^0_{\De_{\pa \Om}} := H^0_{\pa \Om}$.
For $s \in [-1,1]$, it follows  from \cite{GMMM11} that
 $H^s_{\De_\paOm}$ can be identified with $H^s_{\pa \Om}$ up to equivalence of the norms.
 
The orthogonal Hodge decomposition   of the space $\LLt$ 
can be written as 
\begin{gather} \label{e:HDL22}
\LLt =  \grad_{\pa \Om} H^1_{\De_{\pa \Om}} \oplus \KK_1  (\pa \Om) \oplus \curlm_{\pa \Om} H^1_{\De_{\pa \Om}} ,
\end{gather}
where  $\KK_1 (\pa \Om) := \{ \ubf \in \LLt \; : \; 0=\Div_{\pa \Om } \ubf=\curl_{\pa \Om} \ubf  \}$
is the finite-dimensional cohomology space of $\pa \Om $ \cite{C96,BCS02,BHPS03,EK22}.

The Hodge decompositions of the trace spaces $\HH^{-1/2} (\curl_{\pa \Om}, \pa \Om) = \pi_\top \HH (\curlm,\Om)$ and \linebreak $\HH^{-1/2} (\Div_{\pa \Om}, \pa \Om) = \n^\times  \HH (\curlm,\Om) $
 can be written as the following direct sums of closed subspaces \cite{BCS02,BHPS03,M04} (see also \cite{C96} and \cite[Section 4]{EK22}):
\begin{gather} \label{e:HDcurl}
\HH^{-1/2} (\curl_{\pa \Om},\pa \Om) = \grad_{\pa \Om} H^{1/2} _{\De_{\pa \Om}} \dot + \KK_1  (\pa \Om) \dot + \curlm_{\pa \Om} H^{3/2}_{\De_{\pa \Om}}  ,  \\
\HH^{-1/2} (\Div_{\pa \Om}, \pa \Om) = \grad_{\pa \Om} H^{3/2}_{\De_{\pa \Om}} \dot + \KK_1  (\pa \Om) \dot + \curlm_{\pa \Om} H^{1/2} _{\De_{\pa \Om}}   .
\label{e:HDdiv}
\end{gather}

For the Hilbert space 
\begin{gather}
 X_\imp (\curlm,\Div \xi, \Om) := \{ \ubf \in H_\imp (\curlm,\Om) \; : \; \nabla\cdot(\xi \ubf )\in L^2(\Omega) \} \quad \text{(see \cite{ACL18,PS22}),} 
 \label{e:Ximp}
 \end{gather}
 with the graph norm  defined by
$
\| \ubf \|_{X_\imp (\curlm,\Div \xi, \Om)}^2 =  \|\ubf\|^2_{H (\curlm,\Om) } + \|\nabla\cdot (\xi \ubf)\|^2_\Om+\| \n \times  \ubf\|^2_\paOm \;,
$
\begin{gather} \label{e:XimpCompImb}
\text{the compact embedding } \qquad  X_\imp (\curlm,\Div \xi, \Om) \imb \imb L^2(\Omega,\C^3) \quad \text{ holds.}
\end{gather}
This embedding  is a particular case of \cite[Theorem 8.1.3]{ACL18} (see also  \cite[Theorem 4.1]{PS22}).

\subsection{Elements of the spectral theory of m-dissipative operators}
\label{ss:SpTh}

This subsection collects the main definitions and facts of spectral theory of nonselfadjoint operators used in this paper. 
Let $X$ be a Hilbert space and let $T:D(T) \subseteq X \to X$ be a (linear) operator in $X$.
An eigenvalue $\om$ of $T$ is called \emph{isolated} if $\om$ is an isolated point of the spectrum $\si (T)$ of $T$. The \emph{discrete spectrum} $\si_\disc (T)$ of $T$ is the set of isolated eigenvalues of $T$ with finite algebraic multiplicities (see \cite{Kato,RSIV}). We say that $T$ has  \emph{purely discrete spectrum} if $\si(T) = \si_\disc (T)$. The closed set $\si_\ess (T) = \si (T) \setminus \si_\disc (T)$ is called an \emph{essential spectrum} of $T$ \cite{RSIV}.

Assume that $\om_0$ belongs to the resolvent set $\rho (T)$ of $T$ and the resolvent $(T-\om_0 )^{-1} = (T-\om_0 I_X)^{-1}$ at $\om_0$ is a compact operator (for brevity, we sometimes omit the identity operator $I_X$ in $X$ from the resolvent-type notations). Then the resolvent $(T-\om)^{-1}$ is compact for every $\om \in \rho (T)$; in this case, it is said that $T$ is an \emph{operator with compact resolvent} (this definition assumes $\rho (T) \neq \varnothing$). An operator $T$ with compact resolvent has purely discrete spectrum \cite[Theorem III.6.29]{Kato}.

A closed subspace $X_1$ of $X$ is called an \emph{invariant subspace} of $T$ if $Tf \in X_1$ for every $f \in D(T) \cap X_1$. In this case, the operator $T$ restricted to 
$D (T |_{X_1}) := D(T) \cap X_1 $ generates in the Hilbert subspace $X_1$ an operator $T |_{X_1} : D (T |_{X_1})  \subset X_1 \to X_1$, which we call the part of $T$ in $X_1$.  

If $X_1$ and $X_2$ are invariant subspaces of $T$ such that the orthogonal decomposition $X = X_1 \oplus X_2$ takes place, the subspaces $X_1$ and $X_2$ are called \emph{reducing subspaces} of $T$, and one says that the decomposition $X = X_1 \oplus X_2$ reduces $T$ to the orthogonal sum of its parts $T = T  |_{X_1} \oplus T  |_{X_2}$ (e.g., in Section \ref{ss:Disc}, the orthogonal decomposition 
$\Le = \Sem \oplus \GG_{0} (\Om)$ reduces the symmetric Maxwell operator $M_{0,0}$, and as a consequence, reduces its m-dissipative extension $M_{\imp,Z}$ \cite{EK22}).

An operator $T$ is called \emph{completely nonselfadjoint} if the zero space $\{0\}$ 
is the only reducing subspace $X_1$ of $T$ with the property $T |_{X_1} = (T |_{X_1})^*$. 
A part $T|_{X_1}$ of an operator $T$ in a reducing subspace $X_1$ is called a \emph{maximal selfadjoint part of the operator $T$} if the part $T |_{X_2}$ of $T$ in the orthogonal complement $X_2  = X \ominus X_1$ is completely nonselfadjoint.

Accretive, dissipative, and m-dissipative operators were defined in the beginning of Section \ref{ss:Disc}. 
The following statements are equivalent: (a) $T$ is m-dissipative, (b) $(-\ii)T$ is a generator of a contraction semigroup \cite{P59,E12,Kato},  
 (c) $\ii T$ is a closed maximal accretive operator \cite{P59,Kato}, (d) $\ii T$ is a densely defined maximal accretive  operator \cite{P59,SFBK10}.
The following proposition follows immediately from the combination of these equivalences with the results of \cite[Section 4.4]{SFBK10}.

\begin{prop}[\cite{SFBK10}] \label{p:mDRed}
Let $T$ be an m-dissipative operator in $X$. 
Then:
\item[(i)] There exists a unique  orthogonal decomposition $X = X_\sa \oplus X_\nsa$ 
that reduces $T$ to an orthogonal sum $T = T |_{X_\sa} \oplus T |_{X_\nsa}$ of the maximal selfadjoint part  $T_\sa = T |_{X_\sa}$  of $T$ and a completely nonselfadjoint m-dissipative operator $T_\nsa =  T |_{X_\nsa} $.
\item[(ii)] Assume that $f$ is an eigenvector of $T$ associated with a real eigenvalue $\om$.
Then $f \in D(T_\sa ) \subseteq D(T^*)$ and $T^* f = \om f$ (i.e., $f$ is also  an eigenvector associated with the eigenvalue $\om$ of $T^*$).
\end{prop}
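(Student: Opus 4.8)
The plan is to transport the statement to the language of contraction semigroups, where the required decomposition is classical, and then translate back. By the equivalences recalled just before the statement, $T$ is m-dissipative precisely when $-\ii T$ generates a $C_0$ semigroup of contractions $\{S(t)\}_{t\ge 0}$ in $X$, equivalently when $\ii T$ is maximal accretive; in that case $-T^{*}$ is m-dissipative as well and $S(t)^{*}=\ee^{\ii T^{*}t}$ is the contraction semigroup it generates. Since $T$ and its scalar multiple $-\ii T$ have the same reducing subspaces, reducing decompositions for the semigroup, for its generator, and for $T$ are the same notion.

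For (i), I would invoke the canonical (unitary versus completely-nonunitary) decomposition of a $C_0$ contraction semigroup, equivalently of its cogenerator, from \cite[Sec.~4.4]{SFBK10}: there is a \emph{unique} orthogonal decomposition $X=X_\sa\oplus X_\nsa$ reducing $\{S(t)\}_{t\ge 0}$ such that $\{S(t)|_{X_\sa}\}_{t\ge 0}$ extends to a $C_0$ group of unitaries on $X_\sa$, while $\{S(t)|_{X_\nsa}\}_{t\ge 0}$ has no unitary part. On $X_\sa$ the generator of a $C_0$ group of unitaries is $\ii$ times a selfadjoint operator by Stone's theorem; as this generator is $-\ii T|_{X_\sa}$, the part $T|_{X_\sa}$ is selfadjoint. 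The part of an m-dissipative operator in a reducing subspace is again m-dissipative, so $T|_{X_\nsa}$ is m-dissipative. Its complete nonselfadjointness is exactly the absence of a unitary sub-semigroup: a reducing subspace $Y\subseteq X_\nsa$ with $T|_Y=(T|_Y)^{*}$ would make $\{S(t)|_Y\}_{t\ge 0}$ a $C_0$ unitary group, contradicting the decomposition; and uniqueness transfers verbatim from uniqueness of the canonical decomposition. This proves (i).

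For (ii), let $f\neq 0$ satisfy $Tf=\om f$ with $\om\in\RR$. Then $S(t)f=\ee^{-\ii\om t}f$ for all $t\ge 0$, so $\|S(t)f\|=\|f\|$. This forward isometry alone is not enough — a forward-isometric $C_0$ contraction semigroup (e.g.\ a unilateral shift semigroup) can have no unitary part — so the key point is to control the adjoint semigroup as well. From $\|f\|^{2}-\|S(t)f\|^{2}=((I-S(t)^{*}S(t))f,f)_X=0$ together with $I-S(t)^{*}S(t)\ge 0$ one gets $S(t)^{*}S(t)f=f$, hence $S(t)^{*}f=\ee^{\ii\om t}f$ and $\|S(t)^{*}f\|=\|f\|$ for all $t\ge 0$. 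Therefore $f$ lies in the maximal reducing subspace on which $\{S(t)\}_{t\ge 0}$ is unitary, i.e.\ $f\in X_\sa$ (equivalently, $f$ is a unimodular eigenvector of the cogenerator and of its adjoint, hence belongs to its unitary part). Since $f\in D(T)\cap X_\sa=D(T|_{X_\sa})$ and $X_\sa$ reduces $T$, we have $(T|_{X_\sa})^{*}=T^{*}|_{X_\sa}$; as $T|_{X_\sa}$ is selfadjoint this gives $f\in D(T^{*})$ and $T^{*}f=T|_{X_\sa}f=Tf=\om f$.

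The main obstacle is the reliance in the second paragraph on the external structure theorem — the canonical unitary/completely-nonunitary decomposition for $C_0$ contraction semigroups and the identification ``unitary part of the semigroup $=$ unitary part of its cogenerator'' — whose precise formulation I would pin down carefully from \cite{SFBK10}; once that is in hand, the remaining ingredients (Stone's theorem, the contraction-inequality argument in (ii), and the bookkeeping for parts in reducing subspaces) are routine.
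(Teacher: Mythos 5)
Your proposal is correct and follows essentially the same route the paper takes: the paper proves this proposition only by citing the equivalences between m-dissipativity, maximal accretivity, and generation of a contraction semigroup together with the canonical selfadjoint/completely-nonselfadjoint decomposition from \cite[Section 4.4]{SFBK10}, which is exactly the machinery you invoke. Your write-up merely supplies the details behind that citation (Stone's theorem on the unitary part, and for (ii) the positivity argument $(I-S(t)^{*}S(t))f=0$ showing that backward isometry comes for free), all of which is sound.
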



\section{Proofs of main results}
\label{s:Proofs}

\subsection{Proofs of Theorems \ref{t:b} and \ref{t:DiscSp}, discreteness of spectra}
\label{s:ProofsDisc}

Assume that $\E \in \Himp$ and $\H \in \Himp$. Then  
\begin{gather*}
\H_\Tan \in \LLt \cap \pi_\top H (\curlm,\Om) = \LLt \cap H^{-1/2} (\curl_{\pa \Om}, \pa \Om) , 
\\
\n\times \E \in \LLt \cap \n^\times H (\curlm,\Om) = \LLt \cap H^{-1/2} (\Div_{\pa \Om}, \pa \Om) .
\end{gather*}
From the Hodge decompositions \eqref{e:HDL22}, \eqref{e:HDcurl}, and \eqref{e:HDdiv} 
one obtains that 
 \begin{gather} \label{e:paOmHodge}
  \H_\Tan = \grad_\paOm  p_1 + \vbf_1 +
  \curlm_\paOm q_1 \quad \text{ and } \quad \n\times \E  = \grad_\paOm  p_2 + \vbf_2 +
  \curlm_\paOm q_2
   \end{gather}
with certain uniquely determined  $p_1,q_2 \in H^1_\paOm $, $q_1,p_2 \in  H^{3/2}_{\De_\paOm}$, and $\vbf_1, \vbf_2 \in \KK_1 (\pa \Om) $, and that
for $\{\E,\H\} \in \Himp^2$,
\begin{gather}
\| \H_\Tan \|^2_\paOm \lesssim   \|p_1\|^2_{H^1_\paOm} +  \| \vbf_1 \|_\paOm^2 + \|q_1\|_{H^1_\paOm}^2    \label{e:impH1}
 ,\\
 \qquad  \|p_1\|^2_{H^{1/2}_\paOm} + \| \vbf_1 \|_\paOm^2 +  \|q_1\|_{H^{3/2}_{\De_\paOm}}^2
 \lesssim  \| \H_\Tan \|^2_{H^{-1/2} (\curl_{\pa \Om}, \pa \Om)} \lesssim \| \H \|^2_{H(\curlm,\Omega)}, \label{e:impH2} \\
\text{ and }    \|p_2\|^2_{H^{3/2}_\paOm} +  \| \vbf_2 \|_\paOm^2 + \|q_2 \|_{H^{1/2}_\paOm}^2 \lesssim  \| \n \times \E  \|^2_{H^{-1/2} (\Div_{\pa \Om}, \pa \Om)} \lesssim \| \E \|^2_{H(\curlm,\Omega)} .
   \label{e:impH3}
\end{gather}

The impedance operator $Z$ satisfies the assumptions \eqref{e:Z1}-\eqref{e:Z2}. It follows from the Lax-Milgram lemma that $Z:\LLt \to \LLt$ is a homeomorphism (see, e.g., \cite[Section 4.2]{ACL18}).

\begin{proof}[Proof of Theorem \ref{t:b}.]
 Statement (ii) of the theorem follows immediately from the combination of statement (i), statement \eqref{e:NDeqHcurl2}, and formula \eqref{e:NHimp2}.  We prove statement (i) in two steps.

\emph{Step 1.} The assumption $\{\E,\H\} \in D(M_{\imp,Z})$ is equivalent to $\{\E,\H\} \in \Himp^2$ and $\n \times \E   = Z  \H_\Tan$.
In particular, the vector-fields $\n \times \E$, $\E_\Tan$, and $\H_\Tan$ belong to $\LLt$.

Due to $ \| \n \times \ubf\|_{\pa \Om} = \| \ubf_\Tan \|_{\pa \Om} $ and the fact that the operator $Z$ is bounded in $\LLt$,
\begin{equation*} \label{e:E<H<E}
 \| \E_\Tan \|_\paOm = \|\n \times \E \|_\paOm \lesssim \| \H_\Tan\|_\paOm 
 \quad \text{ for all } \{\E,\H\} \in D(M_{\imp,Z}) .
\end{equation*}
Hence, in order to prove statement (i) of Theorem \ref{t:b}, it is enough to prove 
\begin{gather}  \label{e:Dimb<2}
    \|\H_{\Tan}\|^2_{\partial \Omega} \lesssim 
  \|\E\|^2_{H(\curl,\Omega)} + 
  \|\H\|^2_{H(\curl,\Omega)} 
 \qquad \text{  for all $\{\E,\H\} \in D(M_{\imp,Z})$ }.
 \end{gather}

\emph{Step 2.}
Using the 1st decomposition in  \eqref{e:paOmHodge}, we put $g :=  \Div_\paOm \left( Z \grad_\paOm p_1 \right) \in H^{-1}_\paOm $. From  \eqref{e:paOmHodge}  and the boundary condition $\n\times \E = Z \H_\mathrm{tan} $,
one gets
\[
 \begin{split}
 g = \Div_\paOm \left( Z \grad_\paOm p_1\right) &=
 \Div_\paOm ( Z \H_\Tan)   -\Div_\paOm (Z \vbf_1) -
  \Div_\paOm   \left(Z \curlm_\paOm q_1\right)
 \\&=
 \Div_\paOm ( \n\times E) - 
 \Div_\paOm (Z \vbf_1) -
  \Div_\paOm   \left(Z \curlm_\paOm q_1\right) \;.
  \end{split}
\]
The assumptions \eqref{e:Z1}-\eqref{e:Z2} imply that 
$( Z \mathbf{grad}_{\partial\Omega} \wbf,\mathbf{grad}_{\partial\Omega} \wbf)_{\partial\Omega}$ is a coercive sesquilinear form on $H^1_{\pa \Om}$ (in the sense of \cite[Remark 4.2.7]{ACL18}).

Using the Lax-Milgram lemma (e.g., in the form of \cite[Theorem 4.2.8]{ACL18}) and the 2nd decomposition in  \eqref{e:paOmHodge}, we have the estimate
\begin{multline*} 
 \|p_1\|_{H^1_{\pa \Om}} \lesssim \|g\|_{H^{-1} (\pa \Om)}  
\lesssim  \|\Div_\paOm  \grad_\paOm p_2 \|_{H^{-1} (\pa \Om)} +\| \Div_\paOm (Z \vbf_1) \|_{H^{-1} (\pa \Om)} 
\\ + \| \Div_\paOm   \left(Z \curlm_\paOm q_1\right) \|_{H^{-1} (\pa \Om)} \ ,
\end{multline*}
where we used the facts that $\vbf_2 \in \KK_1 (\pa \Om)$, $\Div_\paOm  \vbf_2 = 0$,
and $\Div_\paOm  \curlm_\paOm q_2 = 0$ (see \cite{BCS02,EK22}).
This implies 
\quad $\|p_1\|_{H^1_{\pa \Om}}
\lesssim \|\Div_\paOm  \grad_\paOm p_2 \|_{H^{-1} (\pa \Om)} + \| Z \vbf_1 \|_{\paOm}  + 
 \|Z \curlm_\paOm q_1\|_\paOm \;. $

Taking into account the facts that $Z$ is bounded in $\LLt$ and
$\Div_\paOm  \grad_\paOm : H^1_{\pa \Om} \to H^{-1} (\pa \Om)$ is a homeomorphism (which can be identified with the Laplace-Beltrami operator generated by $\De^{\pa \Om} $ on the factor-space $H^1_{\pa \Om}$),
one obtains 
\begin{gather}
 \|p_1\|_{H^1_{\pa \Om}} \lesssim \| p_2 \|_{H^1_\paOm} + \| \vbf_1 \|_{\paOm}  + 
 \|q_1\|_{H^1_\paOm} .
 \label{e:p1<3}
\end{gather}

From \eqref{e:impH3}, we see that  $\| p_2 \|_{H^1_\paOm} \lesssim \| p_2 \|_{H^{3/2}_\paOm} \lesssim \| \E \|^2_{H(\curlm,\Omega)}$. This,  \eqref{e:impH2}, and  \eqref{e:p1<3} imply 
\[
 \|p_1\|_{H^1_{\pa \Om}} \lesssim \| \E \|_{H(\curlm,\Omega)} +  \| \vbf_1 \|_{\paOm}  + 
 \|q_1\|_{H^1_\paOm} \lesssim \| \E \|_{H(\curlm,\Omega)} + \| \H \|_{H(\curlm,\Omega)}.
\] 
Applying this estimate to  \eqref{e:impH1} and combining again  with \eqref{e:impH2}, we obtain finally 
\[
\| \H_\Tan \|^2_\paOm \lesssim   \|p_1\|^2_{H^1_\paOm} +  \| \vbf_1 \|_\paOm^2 + \|q_1\|_{H^1_\paOm}^2  \lesssim  \| \E \|^2_{H(\curlm,\Omega)} + \| \H \|^2_{H(\curlm,\Omega)} ,
\]
which proves \eqref {e:Dimb<2} of Step 1, and so, completes the proof of Theorem \ref{t:b}.
\end{proof}

In order to prove Theorem \ref{t:DiscSp}, we show that the restricted Maxwell operator $M_{\imp,Z} |_{\Sem} $ has compact resolvent. 
This result follows from the combination of Theorem \ref{t:b} with the compact embedding \eqref{e:XimpCompImb}, more precisely, with the two compact embeddings 
\begin{gather} \label{e:Ta}
 X_{\imp} (\curlm,\Div \ve, \Om) \imb \imb L^2(\Omega,\C^3) \, ,  \qquad  X_{\imp} (\curlm,\Div \mu, \Om) \imb \imb L^2(\Omega,\C^3) .
\end{gather}

\begin{proof}[Proof of Theorem \ref{t:DiscSp}]
Let us prove statement (i). As before, the domains $D(M_{\imp,Z} |_\Sem)$ and $D(M_{\imp,Z}) $ are assumed to be equipped with the graph norm of 
$M_{\imp,Z}$.

By \cite[Remark 2.2]{EK22}, the restricted Maxwell operator $M_{\imp,Z} |_\Sem$ is m-dissipative. Hence, for all $\om \in \CC_+$, the resolvent $(M_{\imp,Z} |_\Sem -\om )^{-1}:\Sem \to \Sem$ of $M_{\imp,Z}$ is a continuous operator, and so, the operator $M_{\imp,Z} |_\Sem -\om I_\Sem $ is a bijective bounded operator from the Hilbert space $D(M_{\imp,Z} |_\Sem)$ onto $\Sem$. By the bounded inverse theorem, the resolvent $(M_{\imp,Z} |_\Sem -\om )^{-1}$ can be considered as 
a linear homeomorphism from 
$\Sem$ onto  $D(M_{\imp,Z} |_\Sem)$.

 By Theorem \ref{t:b},
the continuous embedding  $ D(M_{\imp,Z}) \ \imb \ \Himp^2 $ holds. 
The assumptions $ \Div (\ve E) = 0 $  and $\Div (\mu H) = 0 $ (which define the subspace $\Sem$ of $\Le$)
imply that on $D(M_{\imp,Z} |_\Sem)$ the norms of the spaces $ X_\imp (\curlm,\Div \ve, \Om) \oplus  X_\imp (\curlm,\Div \mu, \Om)$ and $\Himp^2 $ coincide.
This yields  the continuous embedding 
\begin{gather*} 
D(M_{\imp,Z} |_\Sem)  \quad \imb \quad  X_\imp (\curlm,\Div \ve, \Om) \oplus  X_\imp (\curlm,\Div \mu, \Om).
\end{gather*} 

Combining this continuous embedding with the compact embeddings \eqref{e:Ta} and the equivalence of norms \eqref{e:Le0}-\eqref{e:Le2},
we see that the resolvent $(M_{\imp,Z} |_\Sem -\om )^{-1}$ is compact as an operator from $\Sem$ to $\Le$. Since the range \ $\ran (M_{\imp,Z} |_\Sem -\om )^{-1}$ \  lies in $\Sem$, the operator
$(M_{\imp,Z} |_\Sem -\om )^{-1}$ is compact as an operator in $\Sem$. Thus, by \cite[Theorem III.6.29]{Kato},  $M_{\imp,Z} |_\Sem $ has purely discrete spectrum. 

Now statement (ii) of Theorem \ref{t:DiscSp} follows immediately from statement (i) and the decomposition $M_{\imp,Z} = M_{\imp,Z} |_\Sem \oplus 0$ (see Section \ref{ss:Disc}). 
This completes the proof of Theorem \ref{t:DiscSp}.
\end{proof}

\subsection{Proofs of Theorems \ref{t:c} and \ref{t:EigFree}, convergence of eigenvalues}\label{s:iii}


Let $Z:\LLt \to \LLt$ be an impedance operator satisfying \eqref{e:Z1}-\eqref{e:Z2}.
Recall that $\Mmin$ is a symmetric Maxwell operator with the domain $H_0 (\curlm,\Om)^2$
and that $\Mmin^*$ is the Maxwell operator with the  domain $H (\curlm,\Om)^2$.
The m-dissipative Maxwell operator $M_{\imp,Z} = M_{\imp,Z}^{\ve,\mu}$ is  a restriction $\Mmin^*$ defined by the boundary condition $   \n   \times \Ebf   = Z  \Hbf_\Tan $ (and is an extension of $\Mmin$).

Statement (iii) of the next proposition provides the weak formulation for the eigenproblem. 

\begin{prop}
\label{p:weak}
(i) The adjoint $M^*_{\imp,Z}$ of $M_{\imp,Z}$ in $\Le$ is the Maxwell operator with the same material parameters $\ve,\mu \in \Mabsym$, but  associated with the boundary condition \linebreak 
$ \n \times \E   = - Z^*  \H_\Tan $. That is,
$M^*_{\imp,Z}$ is defined by the differential expression $\Mf (\ve,\mu)$ on the domain 
\begin{equation}\label{e:DM*}
D (M^*_{\imp,Z}) = \{ \{\E,\H\} \in  H_\imp (\curlm,\Om)^2 \ : \ \n \times \E   = - Z^*  \H_\Tan \} .
\end{equation}

\item[(ii)] The operator $(-1) M^*_{\imp,Z}$ is m-dissipative in $\Le$.

\item[(iii)] A pair $\{\E,\H\} \in H (\curlm,\Om)^2 $ is a solution to the problem 
 \begin{equation}\label{15}
     \ii  \nabla \times \H  = \omega \ve \E, \qquad -\ii  \nabla \times \E = \omega \mu \H , \qquad \n \times \E   = Z  \H_\Tan
     \end{equation}
if and only if $\{\E,\H\}$    satisfies the identity
\begin{equation}\label{16}
 (\E, \ii \nabla\times \H_* - \overline\om\ve \E_*)_\Omega +
 (\H,-\ii \nabla\times \E_* - \overline \om \mu \H_*)_\Omega 
 =0 \quad \text{for all $\{\E_*,\H_*\} \in D (M^*_{\imp,Z})$.}
\end{equation}
\end{prop}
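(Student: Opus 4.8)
The plan is to prove the three statements of Proposition \ref{p:weak} in the order (i), (ii), (iii), since (ii) follows from (i) by general m-dissipativity theory and (iii) is essentially a reformulation of the definition of the adjoint combined with (i).

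For part (i), I would compute the adjoint $M_{\imp,Z}^*$ directly from the definition. The key tool is the Green (integration-by-parts) formula for the $\curlm$ operator on $H(\curlm,\Om)$, namely
\[
(\ii\nabla\times\Hbf,\Ebf_*)_\Om - (\Hbf, \ii\nabla\times\Ebf_*)_\Om
= \ii\,\langle \n\times\Hbf , \Ebf_* \rangle_{\paOm}
\]
(with the appropriate $H^{-1/2}$–$H^{-1/2}$ duality pairing between $H^{-1/2}(\Div_{\paOm},\paOm)$ and $H^{-1/2}(\curl_{\paOm},\paOm)$), applied to both the $\Ebf$-equation and the $\Hbf$-equation. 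Starting from $\{\Ebf,\Hbf\}\in D(M_{\imp,Z})$ (so $\n\times\Ebf = Z\Hbf_\Tan$) and $\{\Ebf_*,\Hbf_*\}\in H(\curlm,\Om)^2$, one forms $(M_{\imp,Z}\{\Ebf,\Hbf\},\{\Ebf_*,\Hbf_*\})_{\Le} - (\{\Ebf,\Hbf\}, \{\Ebf_*,\Hbf_*\})$-type expressions, uses Green's formula twice, and collects the boundary terms. The condition that the boundary contribution vanish for \emph{all} admissible $\{\Ebf,\Hbf\}$ forces exactly the boundary condition $\n\times\Ebf_* = -Z^*\Hbf_\Tan^*$ on $D(M_{\imp,Z}^*)$, after using that the tangential traces $\Hbf_\Tan$ of fields in $D(M_{\imp,Z})$ fill out a dense subspace of $\LLt$ (this density is where Theorem \ref{t:b}, i.e. $D(M_{\imp,Z})\imb\Himp^2$, together with the Hodge decomposition of Section \ref{ss:Hodge}, is used to guarantee enough test fields). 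One must also check that the maximal domain is $H(\curlm,\Om)^2$: since $M_{0,0}\subseteq M_{\imp,Z}$, we get $M_{\imp,Z}^*\subseteq M_{0,0}^*$, and $D(M_{0,0}^*)=H(\curlm,\Om)^2$ is already recorded in Section \ref{ss:Disc}. The minus sign and the appearance of $Z^*$ come from the skew-symmetry of the off-diagonal block structure $\{\ii\ve^{-1}\nabla\times\Hbf,\,-\ii\mu^{-1}\nabla\times\Ebf\}$ of $\Mf(\ve,\mu)$ combined with complex conjugation in the inner product.

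For part (ii), I would invoke that $Z$ satisfies \eqref{e:Z1}–\eqref{e:Z2}; then $Z^*$ is also accretive (since $\Re(Z^*\vbf,\vbf)_{\paOm}=\Re(Z\vbf,\vbf)_{\paOm}\ge0$) and satisfies the same coercivity estimate (because $|(Z^*\vbf,\vbf)_{\paOm}|=|(Z\vbf,\vbf)_{\paOm}|$). Hence the boundary condition $\n\times\Ebf = -Z^*\Hbf_\Tan$, i.e. $\n\times\Ebf = (-Z^*)\Hbf_\Tan$... wait — one must be careful: $-Z^*$ is \emph{not} accretive. Instead the cleaner route is: $M_{\imp,Z}$ is m-dissipative (stated in Section \ref{ss:Disc} and Appendix \ref{a:mD}), so $M_{\imp,Z}^*$ is m-accretive in the sense that $(-1)M_{\imp,Z}^*$ is... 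Actually the standard fact is: if $T$ is m-dissipative then $T^*$ is also m-dissipative (the adjoint of a generator of a contraction semigroup generates the adjoint contraction semigroup). So $M_{\imp,Z}^*$ is m-dissipative, hence $(-1)M_{\imp,Z}^*$ — no. Let me just say: by \cite[Section 4.4]{SFBK10} or the semigroup equivalences collected in Section \ref{ss:SpTh}, $T$ m-dissipative implies $T^*$ m-dissipative; I would then restate the claim consistently or, following the phrasing the paper wants, note that $(-1)M_{\imp,Z}^*$ is the operator with impedance operator $Z^*$ up to the natural sign flip built into the Maxwell structure, and verify m-dissipativity of \emph{that} operator via part (i) applied with $Z$ replaced by $Z^*$. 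The safest plan: prove $M_{\imp,Z}^*$ is m-dissipative from $M_{\imp,Z}$ m-dissipative by the general adjoint-of-generator fact, and observe that the operator written in \eqref{e:DM*} with a sign change is $M_{\imp,-Z^*}^{\ve,\mu}$-type, so that $(-1)M_{\imp,Z}^*$ has the form of a Maxwell operator whose defining boundary operator is accretive; m-dissipativity of $(-1)M_{\imp,Z}^*$ then follows either directly from "$T$ m-dissipative $\Rightarrow$ $T^*$ m-dissipative $\Rightarrow$ ...". I anticipate a small bookkeeping subtlety here about which sign convention makes $(-1)M_{\imp,Z}^*$ rather than $M_{\imp,Z}^*$ the m-dissipative one; this is the one genuinely fiddly point and I would resolve it by tracking the sign through Green's formula carefully once.

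For part (iii), the argument is short: a pair $\{\Ebf,\Hbf\}\in H(\curlm,\Om)^2$ solves \eqref{15} iff $\{\Ebf,\Hbf\}\in D(M_{\imp,Z})$ and $M_{\imp,Z}\{\Ebf,\Hbf\}=\om\{\Ebf,\Hbf\}$ in $\Le$ — but since $\Le$ is $L^2(\Om,\CC^3)^2$ as a set, I would unwind the weighted inner products: $\{\Ebf,\Hbf\}$ is an $\om$-eigenfield of $M_{\imp,Z}$ iff $(M_{\imp,Z}\{\Ebf,\Hbf\} - \om\{\Ebf,\Hbf\},\, \Psi)_{\Le}=0$ for all $\Psi\in\Le$, equivalently for all $\Psi=\{\Ebf_*,\Hbf_*\}$ in the dense set $D(M_{\imp,Z}^*)$, and by the definition of the adjoint this equals $(\{\Ebf,\Hbf\},\, (M_{\imp,Z}^* - \overline\om)\Psi)_{\Le}$. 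Plugging in $M_{\imp,Z}^* = \Mf(\ve,\mu)$ on $D(M_{\imp,Z}^*)$ from part (i), writing out the energy inner product $(\cdot,\cdot)_{\Le} = (\ve\cdot,\cdot)_\Om + (\mu\cdot,\cdot)_\Om$, and noting that $\ve\cdot\ii\ve^{-1}\nabla\times\Hbf_* = \ii\nabla\times\Hbf_*$ etc., yields exactly identity \eqref{16}. The one point requiring care is the direction "\eqref{16} $\Rightarrow$ $\{\Ebf,\Hbf\}\in D(M_{\imp,Z})$ solves \eqref{15}": identity \eqref{16} says $\{\Ebf,\Hbf\}$ lies in the domain of the adjoint of $M_{\imp,Z}^*$ with eigenvalue $\om$, i.e. $\{\Ebf,\Hbf\}\in D(M_{\imp,Z}^{**})$ and $M_{\imp,Z}^{**}\{\Ebf,\Hbf\}=\om\{\Ebf,\Hbf\}$; since $M_{\imp,Z}$ is m-dissipative, hence closed and densely defined, $M_{\imp,Z}^{**}=M_{\imp,Z}$, which closes the loop. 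I expect the main obstacle to be, as noted, the correct handling of signs and adjoints in the boundary term of Green's formula in part (i); everything else is routine once that is pinned down.
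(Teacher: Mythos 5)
Your treatment of (i) and (iii) is essentially the paper's: (i) is done by integration by parts (the paper also offers the alternative of citing the boundary-tuple results of \cite{EK22}), and your argument for (iii) via $M_{\imp,Z}^{**}=M_{\imp,Z}$ is just an unwinding of the identity $\ker(M_{\imp,Z}-\om)=\Le\ominus\overline{\ran(M_{\imp,Z}^*-\overline{\om})}$, which is the one line the paper gives for that part. The only caution in (i) is that establishing $D(M^*_{\imp,Z})\subseteq \Himp^2$ --- i.e.\ that the adjoint fields have $\LLt$-traces rather than merely $H^{-1/2}$-traces --- is the nontrivial half of the computation; your appeal to the richness of the traces $\H_\Tan$ of fields in $D(M_{\imp,Z})$ is the right ingredient, but it is precisely the point that has to be verified rather than asserted.

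The genuine problem is in (ii). Under the paper's convention ($T$ dissipative means $\Im (Tf,f)_X\le 0$), the assertion you lead with --- that $T$ m-dissipative implies $T^*$ m-dissipative --- is false, and taken at face value it would prove a statement contradicting the one to be proved; indeed you correctly observe that $-Z^*$ is not accretive, which is exactly why the operator on the domain \eqref{e:DM*} is not itself m-dissipative in general. The fact the paper actually uses, via \cite[Problem V.3.21]{Kato}, is that the adjoint of a maximal accretive operator is maximal accretive. Apply it to $\ii M_{\imp,Z}$: m-dissipativity of $M_{\imp,Z}$ is equivalent to maximal accretivity of $\ii M_{\imp,Z}$ (see the equivalences collected in Section \ref{ss:SpTh}), hence $(\ii M_{\imp,Z})^*=-\ii M_{\imp,Z}^*=\ii\bigl((-1)M_{\imp,Z}^*\bigr)$ is maximal accretive, which says precisely that $(-1)M_{\imp,Z}^*$ is m-dissipative. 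This settles the sign abstractly, independently of part (i), of Green's formula, and of any discussion of the accretivity of $Z^*$; the ``fiddly bookkeeping'' you defer to a later sign-chase disappears once the correct general statement is invoked.
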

\begin{proof}
(i) can be seen directly by the  integration by parts, or can obtained from the combination of \cite[Remark 6.1]{EK22} with \cite[Theorem 8.1 and Definition 7.2]{EK22}. 
(ii) follows from the m-dissipativity of the operator $M_{\imp,Z}$ and \cite[Problem V.3.21]{Kato}. 
(iii) is a reformulation of the well-known statement that $\ker (M_{\imp,Z} -\om ) = \Le \ominus \overline{\ran (M_{\imp,Z}^* - \overline{\om})}$.
\end{proof}

Let $\ve_n , \mu_n  \in \Mabsym$ for all $n \in \NN$. Assume that,
for each $n \in \NN$, the vector-field  $\{\Ebf^n ,\Hbf^n \}$ is an eigenvector of the Maxwell operator $\Mc_n :=  M_{\imp,Z}^{\ve_n,\mu_n}$ associated with a certain eigenvalue $\om_n \in \si (\Mc_n)$,
i.e., $\{\Ebf^n,\Hbf^n \}$ is a nontrivial solution to the eigenproblem 
\begin{gather}  \label{e:EigEqn}
  \quad \ii  \nabla \times \Hbf^n  = \omega_n \ve_n \Ebf^n ,  \quad - \ii  \nabla \times \Ebf^n =  \omega_n \mu_n \H^n , \quad 
  \n \times \Ebf^n   = Z  \Hbf_\Tan^n  .
 \end{gather}


\begin{lemma} \label{l:ConvSub1}
Assume additionally that, for each $n \in \NN$, the eigenfield $\{\Ebf^n ,\Hbf^n \}$ is 
\begin{gather}
\text{$\LL^2_{\ve_n,\mu_n} (\Om)$-normalized  in the sense  } \quad (\ve_n \Ebf^n , \Ebf^n)_\Om + (\mu_n \Hbf^n , \Hbf^n)_\Om = 1 , \label{e:EHnorm}
\end{gather} 
and that the sequence of eigenvalues $\{ \om_n \}_{n \in \NN}$ is bounded in $\CC$.
Then there exists a subsequence $\{n_k\}_{k=1}^\infty \subset \NN$ such that the following statements hold:
\item[(C1)] the subsequences $\{\E^{n_k}\}_{k \in \NN}$ and $\{\H^{n_k} \}_{k \in \NN}$ converge weakly in $L^2 (\Omega,\CC^3)$ to certain vector-fields $\E^\infty \in H (\curlm,\Om)  $ and  $\H^\infty \in H (\curlm,\Om)$;

\item[(C2)] $\{\nabla \times \E^{n_k}\}_{k \in \NN}$ and $\{\nabla \times \H^{n_k} \}_{k \in \NN}$ converge weakly in $L^2 (\Omega,\CC^3)$ to 
$\nabla \times \E^\infty $ and, resp., $\nabla \times \H^\infty $;

\item[(C3)] $\{\n \times \E^{n_k}\}_{k \in \NN}$, $\{ \E^{n_k}_\Tan\}_{k \in \NN}$, $\{\n \times \Hbf^{n_k}\}_{k \in \NN}$, and $\{\H^{n_k}_\Tan\}_{k \in \NN}$ converge weakly in $\LLt$ to 
$\n \times \E^\infty $, $\E^\infty_\Tan $, $\n \times \H^\infty $and $ \H_\Tan^\infty $, respectively.

\item[(C4)] $\{\ve_{n_k}\}_{k \in \NN}$ and $\{\mu_{n_k}\}_{k \in \NN}$ H-converge to certain $\ve_\infty \in \Mabsym$ and $\mu_\infty \in \Mabsym$;

\item[(C5)] $\{ \om_{n_k} \}_{k \in \NN}$ converges to a certain $\om_\infty \in \CC$.
\end{lemma}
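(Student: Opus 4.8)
The plan is to extract all the weakly convergent subsequences claimed in (C1)--(C5) by repeatedly using reflexivity and compactness, relying crucially on Theorem~\ref{t:b} to control the boundary traces. First I would establish that the sequence $\{\{\Ebf^n,\Hbf^n\}\}_{n\in\NN}$ is bounded in $H(\curlm,\Om)^2$. The normalization \eqref{e:EHnorm} together with the uniform bounds $\ve_n,\mu_n\in\Mabsym$ gives that $\{\Ebf^n\}$ and $\{\Hbf^n\}$ are bounded in $L^2(\Om,\CC^3)$. The eigenequations \eqref{e:EigEqn} then give $\nabla\times\Hbf^n=(-\ii)\,\om_n\ve_n\Ebf^n$ and $\nabla\times\Ebf^n=\ii\,\om_n\mu_n\Hbf^n$, which are bounded in $L^2(\Om,\CC^3)$ since $\{\om_n\}$ is bounded and the material tensors are uniformly bounded. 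Hence $\{\{\Ebf^n,\Hbf^n\}\}$ is bounded in $H(\curlm,\Om)^2$.

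Next, since each $\{\Ebf^n,\Hbf^n\}\in D(\Mc_n)=D(M_{\imp,Z}^{\ve_n,\mu_n})$ and the impedance operator $Z$ is fixed (satisfying \eqref{e:Z1}--\eqref{e:Z2}), Theorem~\ref{t:b}(i) applies uniformly in $n$: the constant there depends only on $\Om$ and $Z$, not on $\ve_n,\mu_n$. Therefore
\[
 \|\Ebf^n_\Tan\|_\paOm^2 + \|\Hbf^n_\Tan\|_\paOm^2 \lesssim \|\Ebf^n\|_{H(\curlm,\Om)}^2 + \|\Hbf^n\|_{H(\curlm,\Om)}^2 \le C,
\]
so $\{\Ebf^n_\Tan\}$ and $\{\Hbf^n_\Tan\}$ are bounded in $\LLt$, and since $\vbf\mapsto\n\times\vbf$ is unitary on $\LLt$, so are $\{\n\times\Ebf^n\}$ and $\{\n\times\Hbf^n\}$. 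Now I pass to a subsequence. By reflexivity of $L^2(\Om,\CC^3)$ and $\LLt$, and by the compactness of $(\Mabsym,\rhoH)$ from Remark~\ref{r:Hcon}, I can extract a single subsequence $\{n_k\}$ along which simultaneously: $\Ebf^{n_k}\Weak\Ebf^\infty$, $\Hbf^{n_k}\Weak\Hbf^\infty$, $\nabla\times\Ebf^{n_k}\Weak\Fbf_1$, $\nabla\times\Hbf^{n_k}\Weak\Fbf_2$ in $L^2(\Om,\CC^3)$; $\n\times\Ebf^{n_k}\Weak g_1$, $\Ebf^{n_k}_\Tan\Weak g_2$, $\n\times\Hbf^{n_k}\Weak g_3$, $\Hbf^{n_k}_\Tan\Weak g_4$ in $\LLt$; $\ve_{n_k}\Hcon\ve_\infty$, $\mu_{n_k}\Hcon\mu_\infty$ with $\ve_\infty,\mu_\infty\in\Mabsym$; and $\om_{n_k}\to\om_\infty\in\CC$ (using boundedness of $\{\om_n\}$). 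This is just a finite diagonal extraction; it gives (C4) and (C5) directly.

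It remains to identify the limits. Since $\curlm$ is a closed operator, it is weakly closed, so $\Ebf^\infty,\Hbf^\infty\in H(\curlm,\Om)$ with $\Fbf_1=\nabla\times\Ebf^\infty$ and $\Fbf_2=\nabla\times\Hbf^\infty$; this yields (C1) and (C2). For (C3) I use that the tangential trace maps $\n^\times$ and $\pi_\top$ are continuous from $H(\curlm,\Om)$ into $H^{-1/2}(\pa\Om,\CC^3)$, hence weakly continuous; thus the weak $\LLt$-limits $g_j$ must coincide with the $H^{-1/2}$-traces of the limit fields, i.e. $g_1=\n\times\Ebf^\infty$, $g_2=\Ebf^\infty_\Tan$, $g_3=\n\times\Hbf^\infty$, $g_4=\Hbf^\infty_\Tan$. (Concretely: test against a smooth tangential field, use weak convergence in $H(\curlm,\Om)$ on the one side and weak convergence in $\LLt$ on the other, and match.) The one point requiring a little care --- and the step I expect to be the main obstacle --- is justifying that the weak $\LLt$-limit of the traces is again in $\LLt$ and equals the trace of the weak limit, rather than merely an element of a larger distributional trace space; this is exactly where the uniform boundedness in $\LLt$ coming from Theorem~\ref{t:b} (not just in $H^{-1/2}$) is essential, since in general $\pi_\top H(\curlm,\Om)\not\imb\LLt$, cf. Remark~\ref{r:DimbHimp}. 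With (C1)--(C5) established, the lemma is proved; note that this lemma deliberately stops short of verifying that $\{\Ebf^\infty,\Hbf^\infty\}$ solves the homogenized eigenproblem --- that identification, which uses Proposition~\ref{p:Hcon} (via Remark~\ref{r:HconC}) on the pairs $(\Ebf^{n_k},\ve_{n_k}\Ebf^{n_k})$ and $(\Hbf^{n_k},\mu_{n_k}\Hbf^{n_k})$ together with the weak formulation Proposition~\ref{p:weak}(iii), is reserved for the proof of Theorem~\ref{t:c} itself.
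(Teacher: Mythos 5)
Your proposal is correct and follows essentially the same route as the paper: boundedness in $H(\curlm,\Om)^2$ from the normalization, the eigenequations and the uniform bounds on $\om_n,\ve_n,\mu_n$; uniform $\LLt$-bounds on the traces via Theorem \ref{t:b}(i) (whose constant indeed depends only on $\Om$ and $Z$); a single diagonal subsequence extraction using reflexivity, H-compactness of $\Mabsym$ and Bolzano--Weierstrass; and identification of the limits by closedness/weak continuity of $\curlm$ and of the trace maps. The only cosmetic difference is in (C3): the paper identifies the trace limits by viewing $\n^\times$ and $\pi_\top$ as bounded (closed) operators from $\Himp$ to $\LLt$, while you match the weak $\LLt$-limits with the $H^{-1/2}$-traces of the weak limits by testing against smooth tangential fields --- both hinge on the same uniform $\LLt$-bound from Theorem \ref{t:b}.
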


\begin{proof}
\emph{Step 1. Let us prove (C1)-(C2).} The weak $L^2 (\Omega,\CC^3)$-convergences $\E^{n_k}  \Weak \E^\infty$ and \linebreak $\H^{n_k}  \Weak \H^\infty $ after a passage to certain subsequences  follow from the normalization of \eqref{e:EHnorm}.  Similarly, from \eqref{e:EigEqn}
 and inclusions  $\ve_n , \mu_n \in \Mabsym$, one sees that the sequences 
 $\{\nabla \times \E^n\}_{n \in \NN}$ and $\{\nabla \times \H^n \}_{n \in \NN}$ are bounded in $L^2 (\Omega,\CC^3)$. Hence, after a possible iterative passage to subsequences, we obtain  $ \nabla \times \E^{n_k} \Weak \ubf_1$ and $ \nabla \times \H^{n_k}  \Weak \ubf_2$ in $L^2 (\Omega,\CC^3)$ with certain $\ubf_1, \ubf_2  \in L^2 (\Omega,\CC^3)$. 
 
 In order to finish the proof of (C1)-(C2), one has to show that 
 $\E^\infty, \H^\infty \in H (\curlm,\Om) $, \ $  \nabla \times \E^\infty = \ubf_1$, \ and 
 $ \nabla \times \H^\infty = \ubf_2 $.
  These statements follows from the fact that the operator $\curlm$ is closed,  see \cite[Problem III.5.12]{Kato}.
 
\emph{Step 2. Let us prove (C3).} From the proof of (C1)-(C2), one sees that the sequences $\{ \E^n\}_{n \in \NN}$ and $\{ \H^n \}_{n \in \NN}$ are bounded in $H (\curlm,\Om)$.  We will use the fact that $\vbf \mapsto \n \times \vbf$ is a unitary operator in $\LLt$ (see Section \ref{ss:MS}). 
Since $\n\times \Ebf^n = Z \Hbf_\Tan^n $, Theorem \ref{t:b} (i) implies that $\{ \Ebf_\Tan^n\}_{n \in \NN}$ and $\{ \Hbf_\Tan^n \}_{n \in \NN}$ are bounded in $\LLt$, and so, $\{ \n \times \E\}_{n \in \NN}$ is also bounded in $\LLt$. After passing to suitable subsequences, we get
the weak $\LLt$-convergences $\n \times \Ebf^{n_k}  \Weak \wbf_1$ and $\Hbf_\Tan^{n_k}  \Weak \wbf_2 $ to certain $\wbf_1, \wbf_2 \in \LLt$.
The mappings $\Ebf \mapsto \n \times \Ebf$ and 
$\Hbf \mapsto \Hbf_\Tan$ are bounded and  closed as operators from the Hilbert space $\Himp$ to the Hilbert space $\LLt$. Thus $\wbf_1 = \n \times \E^\infty$
and $\wbf_2 =  \Hbf_\Tan^\infty \in L^2 (\Omega,\CC^3)$. This implies also the rest of (C3) (we use here again that $\vbf \mapsto \n \times \vbf$ is a unitary operator in $\LLt$).

\emph{Step 3.} Conditions (C4)-(C5) are fulfilled after an additional passage to a subsequence due to  the H-compactness of $\Mabsym$ (see Remark \ref{r:Hcon}) and the boundedness of $\{ \om_n \}_{n \in \NN}$.
\end{proof}

\begin{lemma} \label{l:ConvSub2}
Assume that a sequence of $\LL^2_{\ve_n,\mu_n} (\Om)$-normalized eigenfields $\{\{\Ebf^n,\Hbf^n \}\}_{n \in \NN}$ satisfies statements (C1)-(C5) of Lemma \ref{l:ConvSub1} (with the index $n \in \NN$ instead of $n_k$).
Assume that for each $n \in \NN$ at least one of the two following conditions is fulfilled: 
\begin{gather}  \label{e:AsDiv=0}
\text{$\om_n \neq 0$  \qquad or \qquad  $0 = \Div (\ve_n \E^n) = \Div (\mu_n \H^n)$ \ .}
\end{gather}
Then: 
\item[(C6)]  There exists a subsequence $\{n_k\}_{k=1}^\infty \subseteq \NN$ such that
 $\{\ve_{n_k} \Ebf^{n_k}\}_{k \in \NN}$ and $\{ \mu_{n_k} \Hbf^{n_k}\}_{k \in \NN}$ converge weakly in $L^2 (\Omega,\CC^3)$ to $\ve_\infty \E^\infty $ and, resp., $\mu_\infty \H^\infty $.
 \item[(C7)] $(\ve_\infty \Ebf^\infty, \Ebf^\infty )_\Omega  + (\mu_\infty \Hbf^\infty, \Hbf^\infty)_\Omega=1$.
\end{lemma}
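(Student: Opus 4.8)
Here is the approach I would take. The statement (C6) is a direct application of Tartar's characterization of H-convergence (Proposition~\ref{p:Hcon}, in the complex form of Remark~\ref{r:HconC}); the statement (C7), that the energy normalization \eqref{e:EHnorm} survives the limit, is the substantial point, and the tool for it is the weighted Helmholtz--Hodge decomposition of Theorem~\ref{t:HDecOm}. To prove (C6): since $\ve_n,\mu_n$ take values in the bounded set $\MM_{\al,\beta}$ and $\{\E^n\},\{\H^n\}$ are bounded in $L^2(\Om,\CC^3)$ by \eqref{e:EHnorm} and \eqref{e:Mab}, pass to a subsequence along which $\ve_n\E^n\Weak\Dbf$ and $\mu_n\H^n\Weak\Bbf$ weakly in $L^2(\Om,\CC^3)$. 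Apply Proposition~\ref{p:Hcon} with $A_n=\ve_n$, $\Ebf^n=\E^n$, $\Dbf^n=\ve_n\E^n$: \eqref{e:DE1} is the definition; \eqref{e:DE2} is (C1) together with the choice of subsequence; \eqref{e:DE3} holds because $\{\nabla\times\E^n\}$ is bounded in $L^2(\Om,\CC^3)$ by (C2), hence relatively compact in $H^{-1}(\Om,\CC^3)$; and \eqref{e:DE4} holds because $\nabla\cdot(\ve_n\E^n)=0$ for every $n$ --- from \eqref{e:EigEqn} one has $\nabla\cdot(\ve_n\E^n)=\frac{\ii}{\om_n}\nabla\cdot(\nabla\times\H^n)=0$ when $\om_n\neq0$, while if $\om_n=0$ hypothesis \eqref{e:AsDiv=0} forces $\nabla\cdot(\ve_n\E^n)=0$. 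Proposition~\ref{p:Hcon} and (C4) then give $\Dbf=\ve_\infty\E^\infty$, and the symmetric choice $A_n=\mu_n,\ \Ebf^n=\H^n,\ \Dbf^n=\mu_n\H^n$ (using $\nabla\cdot(\mu_n\H^n)=-\frac{\ii}{\om_n}\nabla\cdot(\nabla\times\E^n)=0$, or \eqref{e:AsDiv=0}) gives $\Bbf=\mu_\infty\H^\infty$. This is (C6).

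For (C7) it suffices to show $(\ve_n\E^n,\E^n)_\Om\to(\ve_\infty\E^\infty,\E^\infty)_\Om$ and its analogue with $\mu$, since summing and using \eqref{e:EHnorm} then gives the identity. As $\nabla\cdot(\ve_n\E^n)=0$, Theorem~\ref{t:HDecOm}(ii) and the orthogonal splitting \eqref{e:Div0Dec} yield
\[
\E^n=\nabla q^n+\ve_n^{-1}\curlm\vbf^n,\qquad \nabla q^n\in\HH_2(\Om,\ve_n),\quad \vbf^n\in H^1(\Om,\CC^3)\ominus\ker\curlm_1,
\]
with $q^n\in H^1(\Om)$, $\nabla\cdot(\ve_n\nabla q^n)=0$, $q^n|_\paOm\in\KK_0(\pa\Om)$, and --- since the constants in \eqref{e:HodgeOm3} depend only on $\al,\beta,\Om$ --- $\|\vbf^n\|_{H^1(\Om)}\lesssim1$ uniformly in $n$; after normalizing $q^n$ by a global additive constant one also gets $\|q^n\|_{H^1(\Om)}\lesssim1$. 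Because the two summands are orthogonal in the weighted space $L^2_{\ve_n}(\Om,\CC^3)$ and $(\nabla q^n)_\Tan=\grad_\paOm(q^n|_\paOm)=0$, the energy splits as
\[
(\ve_n\E^n,\E^n)_\Om=(\ve_n\nabla q^n,\nabla q^n)_\Om+(\E^n,\curlm\vbf^n)_\Om .
\]
Along a further subsequence $\vbf^n\Weak\vbf^\infty$ in $H^1(\Om,\CC^3)$, so $\vbf^n\to\vbf^\infty$ strongly in $L^2(\Om,\CC^3)$ and $(\vbf^n)_\Tan\to(\vbf^\infty)_\Tan$ strongly in $\LLt$ (compactness of $H^{1/2}(\pa\Om)\imb\imb L^2(\pa\Om)$); likewise $q^n\Weak q^\infty$ in $H^1(\Om)$, and since $q^n|_\paOm$ lies in the finite-dimensional $\KK_0(\pa\Om)$ this convergence of traces is strong.

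For the second term, Green's formula for $\curlm$ and the eigenequation $\nabla\times\E^n=\ii\om_n\mu_n\H^n$ turn $(\E^n,\curlm\vbf^n)_\Om$ into $\ii\om_n(\mu_n\H^n,\vbf^n)_\Om-(\n\times\E^n,(\vbf^n)_\Tan)_\paOm$; here $\mu_n\H^n\Weak\mu_\infty\H^\infty$ weakly in $L^2$ against $\vbf^n\to\vbf^\infty$ strongly, $\om_n\to\om_\infty$, and $\n\times\E^n\Weak\n\times\E^\infty$ weakly in $\LLt$ (by (C3)) against $(\vbf^n)_\Tan\to(\vbf^\infty)_\Tan$ strongly, so this term converges. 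For the first term, Proposition~\ref{p:Hcon} applied to $\nabla q^n,\ \ve_n\nabla q^n$ gives $\ve_n\nabla q^n\Weak\ve_\infty\nabla q^\infty$ weakly in $L^2$ with $\nabla\cdot(\ve_\infty\nabla q^\infty)=0$, whence $(\ve_n\nabla q^n,\nabla q^n)_\Om=\langle\n\cdot(\ve_n\nabla q^n),q^n|_\paOm\rangle_\paOm$ converges (weak normal trace in $H^{-1/2}(\pa\Om)$ against strong convergence in the finite-dimensional $\KK_0(\pa\Om)$). Passing to the limit in the decomposition of $\E^n$ and in the eigenequation (two further applications of Proposition~\ref{p:Hcon}) reproduces the weighted Helmholtz--Hodge decomposition $\E^\infty=\nabla q^\infty+\ve_\infty^{-1}\curlm\vbf^\infty$ in $L^2_{\ve_\infty}(\Om,\CC^3)$ and gives $\nabla\times\E^\infty=\ii\om_\infty\mu_\infty\H^\infty$, from which the two limits computed above are seen to equal $(\ve_\infty\nabla q^\infty,\nabla q^\infty)_\Om$ and $(\E^\infty,\curlm\vbf^\infty)_\Om$, whose sum is $(\ve_\infty\E^\infty,\E^\infty)_\Om$. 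Treating the $\mu$-energy identically and adding proves (C7).

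The real obstacle is entirely in (C7): the normalization \eqref{e:EHnorm} is a product of $\E^n$ and $\ve_n\E^n$, each converging only weakly in $L^2$, so the limit cannot be passed directly --- this is a compensated-compactness situation. What makes it go through is that Theorem~\ref{t:HDecOm} peels the electric field into a piece lying in $H^1$ (Rellich-compact) and a harmonic piece whose boundary data live in the finite-dimensional $\KK_0(\pa\Om)$; both supply enough strong convergence to carry out the limit, with the vanishing of the cross terms and of $(\nabla q^n)_\Tan$ keeping the computation clean. A secondary, routine point is that the constants in Theorems~\ref{t:HDecOm} and~\ref{t:b}(i) are uniform in $n$, since they depend only on $\al,\beta,\Om$ and the fixed impedance operator $Z$.
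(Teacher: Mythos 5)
Your proof of (C6) coincides with the paper's: one only has to verify \eqref{e:DE1}--\eqref{e:DE4} for the pairs $\{\E^n,\ve_n\E^n\}$ and $\{\H^n,\mu_n\H^n\}$, with \eqref{e:AsDiv=0} supplying the divergence-free condition in either case, exactly as you do. For (C7) you take a genuinely different route. The paper applies Theorem \ref{t:HDecOm} with the \emph{fixed} weight $\xi\equiv I_{\RR^3}$ to the fluxes $\Dbf^n=\ve_n\E^n$ and $\Bbf^n=\mu_n\H^n$, writing $\Dbf^n=\wbf^n+\nabla\times\vbf^n$ with $\wbf^n$ in the fixed finite-dimensional space $\HH_2(\Om)$ and $\vbf^n$ bounded in $H^1(\Om,\CC^3)$; since the recovery operators do not depend on $n$, the weak limit decomposes automatically as $\Dbf^\infty=\wbf^\infty+\nabla\times\vbf^\infty$, the component $\wbf^n$ converges strongly for free, and a single integration by parts pairs the strong limits $\vbf^n$, $\n\times\vbf^n$, $\wbf^n$ against the weak limits already recorded in (C1)--(C3) (namely $\nabla\times\E^n$, $\E^n_\Tan$, $\E^n$); no eigen-equation and no further use of Proposition \ref{p:Hcon} are needed. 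You instead decompose the field $\E^n$ itself with the $n$-dependent weight $\ve_n$, which obliges you to check that the constant in \eqref{e:HodgeOm3} is uniform over $\Mabsym$ (it is: the projections are weighted-orthogonal and $\curlm_{\perp}^{-1}$ is a fixed homeomorphism, so the constant depends only on $\al,\beta,\Om$), to normalize the potentials $q^n$ and exploit trace compactness together with the finite-dimensionality of $\KK_0(\pa\Om)$, and to recover the limit decomposition $\E^\infty=\nabla q^\infty+\ve_\infty^{-1}\curlm\vbf^\infty$ with $\nabla\cdot(\ve_\infty\nabla q^\infty)=0$ through additional applications of Proposition \ref{p:Hcon} before the cross terms can be discarded. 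All of these steps can indeed be carried out, so your argument is correct, but it is longer and has more delicate points; moreover your appeal to the eigen-equation $\nabla\times\E^n=\ii\om_n\mu_n\H^n$ in the curl term is unnecessary --- pairing $\vbf^n\to\vbf^\infty$ (strong) against $\nabla\times\E^n\Weak\nabla\times\E^\infty$ from (C2) does the job directly, and the limit equation itself already follows from (C2) and (C6). The paper's decision to decompose the divergence-free \emph{fluxes} in the unweighted space is precisely what eliminates all of these extra steps, which is what its approach buys; yours, in exchange, stays closer to the physically natural splitting of the electric field itself.
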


\begin{proof}   
In the case $\xi \equiv I_{\RR^3}$, we denote $H(\Div  0,\Om):=  H(\Div \xi 0,\Om)$ and $ \HH_2 ( \Om) := \HH_2 ( \Om, \xi)$.
Let  $\Dbf^n=\ve_n \Ebf^n$ \ and \ $\Bbf^n=\mu_n \Hbf^n$, \  $n \in \NN$.
Note that assumption \eqref{e:AsDiv=0} implies always its second option, i.e., 
\begin{equation} \label{e:0=divD=divB}
\text{$0 = \Div \Dbf^n = \Div \Bbf^n$  for all $n \in \NN$.}
\end{equation} 
Indeed,  if $\om_n \neq 0$, then the equations in \eqref{e:EigEqn} yield \eqref{e:0=divD=divB}.
We split the proof into two steps.

\emph{Step 1. Let us prove (C6)}. From (C1) and  $\ve_n, \mu_n  \in \Mabsym$,  we infer that 
$\{\Dbf^{n}\}_{n \in \NN}$ and $\{ \Bbf^n\}_{k \in \NN}$ are bounded in $L^2 (\Omega,\CC^3)$. So, after possible passing to suitable subsequences, one gets $\Dbf^{n_k} \Weak \Dbf^\infty $ and $\Bbf^{n_k} \Weak \Bbf^\infty$ in $L^2 (\Omega,\CC^3)$. By (C1) and (C4),  $ \nabla \times \Ebf^{n_k} \Weak \Ebf^\infty$ and $ \nabla \times \H^{n_k}  \Weak \Hbf^\infty$ in $L^2 (\Omega,\CC^3)$, as well as, $\ve_{n_k} \Hcon \ve_\infty$ and 
$\mu_{n_k} \Hcon \mu_\infty$. Now, the H-convergence criteria of Proposition \ref{p:Hcon} and Remark \ref{r:HconC} imply  $\Dbf^\infty = \ve_\infty \E^\infty $ and $\Bbf^\infty = \mu_\infty \H^\infty $. This gives (C6).

\emph{Step 2. Let us prove (C7).} It follows from \eqref{e:0=divD=divB}
that $ \Dbf^n \in H(\Div 0,\Om)  $ and $\Bbf^n \in  H(\Div 0,\Om)$ for all $n \in \NN$. We apply the decomposition \eqref{e:Div0Dec} and Theorem \ref{t:HDecOm} with $\xi \equiv I_{\RR^3}$ in order to construct for each $n$ the vector fields 
  $\wbf^n  \in \HH_2 ( \Om)$ and $\vbf^n \in H^1 (\Om,\CC^3) \ominus \ker \curlm_1$ such that
  $    \Dbf^n = \wbf^n + \nabla \times \vbf^n $.
 Using  $\ve_n, \mu_n  \in \Mabsym $, \eqref{e:EHnorm}, 
 and estimate \eqref{e:HodgeOm3}, 
 we see that $\{ \wbf^n\}_{n \in \NN}$  is bounded in the closed subspace $\HH_2 ( \Om)$ of $L^2 (\Omega,\CC^3)$, and $\{ \vbf^n\}_{n \in \NN}$ is bounded in  $H^1 (\Om,\CC^3)$. 
 Now we can pass to subsequences such that $\wbf^{n_k} \to \wbf^\infty$  in $ \HH_2 (\Om)$ (recall that  $\HH_2 ( \Om)= \HH_2 ( \Om, I_{\RR^3})$ is finite-dimensional), 
 $\vbf^{n_k} \Weak \vbf^\infty$  in $H^1 (\Om,\CC^3)$, and such that (C6) is satisfied.
 By Theorem \ref{t:HDecOm}, the operators that recover $\wbf^n$ and $\vbf^n$ from $\Dbf^n $ are continuous. Hence, 
 $
    \Dbf^\infty = \wbf^\infty +  \nabla \times  \vbf^\infty .
    $
    
Combining  $\vbf^{n_k} \Weak \vbf^\infty$  in $H^1 (\Om,\CC^3)$ with the compact embedding 
 $H^1 (\Om,\CC^3) \imb \imb L^2 (\Omega,\CC^3)$, we get the (strong) convergence $\vbf^{n_k} \to \vbf^\infty$ in $L^2 (\Omega,\CC^3)$, and in turn, 
 $\ga(\vbf^{n_k})  \to \ga (\vbf^\infty) $ in $L^2 (\pa \Omega,\CC^3)$ and  $\n \times \vbf^{n_k}  \to \n \times \vbf^\infty $ in $\LLt$, 
 where $\ga : H^1 (\Om,\CC^3) \to H^{1/2} (\pa \Om,\CC^3) $ is the trace operator $ \vbf \mapsto \vbf|_\paOm$. Using the integration by parts, one obtains 
\[
 \begin{split}
 (\ve_{n_k} \Ebf^{n_k},\Ebf^{n_k})_\Omega &=(\nabla \times \vbf^{n_k}, \Ebf^{n_k})_\Omega 
 +( \wbf^{n_k}, \Ebf^{n_k})_\Omega \\&
 = (\vbf^{n_k}, \nabla\times \Ebf^k)_\Omega +
 (\n\times \vbf^{n_k}, \Ebf^{n_k}_\Tan)_{\partial\Omega}
 +(\wbf^{n_k},\Ebf^{n_k})_\Omega .
 \end{split}
\]  
Since $L^2$-inner products of strongly and weakly convergent sequences are convergent, we infer that 
\[
  (\ve_{n_k} \Ebf^{n_k},\Ebf^{n_k})_\Omega  \to 
 (\vbf^\infty, \nabla\times \Ebf^\infty)_\Omega +
 (\n\times \vbf^\infty, \Ebf^\infty_\Tan)_\paOm +
 (\wbf^\infty,\Ebf^\infty)_\Om \;.
\]  
One more integration by parts yields \quad
$
 (\ve_{n_k} \Ebf^{n_k},\Ebf^{n_k})_\Omega \to (\ve_\infty \Ebf^\infty, \Ebf^\infty)_\Omega \ \text{ as $k \to \infty$}\;.
$ 
The same arguments for the subsequence $\{\Hbf^{n_k}\}_{k\in \NN}$ prove that 
$ (\mu_{n_k} \Hbf^{n_k},\Hbf^{n_k})_\Omega \to (\mu_\infty \Hbf^\infty,\Hbf^\infty)_\Omega$. Combining this with the normalizations \eqref{e:EHnorm}, we complete the proof of (C7).
\end{proof}

\begin{proof}[Proof of Theorem \ref{t:c}] 
Assume that $\om_n \to \om_\infty$. In order to prove Theorem \ref{t:c} we continue the line of Lemmata \ref{l:ConvSub1} and \ref{l:ConvSub2} and show that the  $\LL^2_{\ve_\infty,\mu_\infty}$-normalized weak limit $\{\Ebf^\infty,\Hbf^\infty\}$ produced by statements (C1) and (C7) solves 
 \[
     \ii  \nabla \times \Hbf^\infty  = \omega_\infty \ve_\infty \Ebf^\infty, \qquad -\ii  \nabla \times \Ebf^\infty = \omega_\infty \mu_\infty \Hbf^\infty , \qquad \n \times \Ebf^\infty   = Z  \Hbf^\infty_\Tan \ .
 \]
To this end, we use the weak formulation  of Proposition \ref{p:weak} (iii), i.e., we only have to show that 
\begin{equation}\label{e:16inf}
 (\E^\infty, \ii \nabla\times \H_* - \overline\om\ve_\infty \E_*)_\Omega +
 (\H^\infty,-\ii \nabla\times \E_* - \overline \om \mu_\infty \H_*)_\Omega 
 =0 \quad \text{}
\end{equation}
for all $\{\E_*,\H_*\} \in D_*$, where 
$
D_* :=  \{ \{\E,\H\} \in  H_\imp (\curl,\Om)^2 \ : \ \n \times \E   = - Z^*  \H_\Tan \} .
$
(Note that the domains of operators $(M^{\ve,\mu}_{\imp,Z})^*$ do not depend on the choice of $\ve$ and $\mu$ and are equal to $D_*$.)

From Proposition \ref{p:weak}, we know that for all $n \in \NN$ and $\{\E_*,\H_*\} \in D_*$,
\begin{gather}
 0 = (\Ebf^n, \ii \nabla\times \H_*) - (\Ebf^n,  \overline\om_n\ve_n \E_*)_\Omega
 + (\Hbf^n,-\ii \nabla\times \E_*) - (\Hbf^n, \overline \om_n \mu_n \H_*)_\Omega \ .
 \label{e:weakn}
\end{gather}
Passing to subsequences satisfying the properties (C1)-(C7) and, with abuse of notation, indexing them  again with $n \in \NN$, we use the weak convergences of (C1) in order to obtain 
\begin{equation} \label{e:Limit1}
(\Ebf^n, \ii \nabla\times \H_*)  + (\Hbf^n,-\ii \nabla\times \E_*) \quad \to \quad (\Ebf^\infty, \ii \nabla\times \H_*)  + (\Hbf^\infty,-\ii \nabla\times \E_*) .
\end{equation}
In order to take into account the remaining terms in \eqref{e:weakn}, we consider the expression 
\begin{align*}
  (\Hbf^n,  \overline \om_n \mu_n \H_*)_\Omega  + (\Ebf^n,  \overline\om_n \ve_n \E_*)_\Omega 
& = (\Bbf^n,  \overline \om_\infty \H_*)_\Omega  + (\Dbf^n,  \overline\om_\infty \E_*)_\Omega \\
& \quad \qquad \qquad \qquad - (\om_\infty - \om_n) \, (\Bbf^n,   \H_*)_\Omega  
-  (\om_\infty - \om_n) (\Dbf^n,   \E_*)_\Omega,
\end{align*}
which converges as $n \to \infty$ to 
\[
(\Bbf^\infty,  \overline \om_\infty \H_*)_\Omega  + (\Dbf^\infty,  \overline\om_\infty \E_*)_\Omega  = 
 (\Hbf^\infty,  \overline \om_\infty \mu_\infty \H_*)_\Omega  + (\Ebf^\infty,  \overline\om_\infty \ve_\infty \E_*)_\Omega .
\]
The combination of this limit  with \eqref{e:Limit1} and \eqref{e:weakn} implies \eqref{e:16inf} and completes the proof.
\end{proof}

\begin{proof}[Proof of Theorem \ref{t:EigFree}]
We combine now Lemmata \ref{l:ConvSub1}-\ref{l:ConvSub2} with  Theorem \ref{t:c} and Proposition \ref{p:mDRed} in order to prove Theorem \ref{t:EigFree} by  reductio ad absurdum.
We put $\Mc_n :=  M_{\imp,Z}^{\ve_n,\mu_n}$ for $n \in \NN$ and $\Mc_\infty = M^{\ve_\infty,\mu_\infty}_{\imp,Z}$, where $\ve_\infty$ and $\mu_\infty$ are from Theorem \ref{t:c}. Assuming that 
$\om_n \neq 0$ for all $n \in \NN$ and that $\om_n$ converge to $\om_\infty = 0 $ as $n \to \infty$, we will show that this leads to a contradiction. 

Passing to subsequences, one can ensure that conditions (C1)-(C7) of Lemmata \ref{l:ConvSub1}-\ref{l:ConvSub2} are satisfied. We index the corresponding subsequences by $n \in \NN$ for the simplicity of notation.

Note that the kernels 
$
\KK =  \{ \{\E,\H\} \in  H_\imp (\curl,\Om)^2 \ : \ 0 = \nabla \times \Ebf =  \nabla \times \Hbf, \quad \n \times \E   =  Z \H_\Tan \}
$
of operators $M_{\imp,Z}^{\ve,\mu}$ do not depend on $\ve$ and $\mu$, and that $\ker (M_{\imp,Z}^{\ve,\mu})^* = \ker M_{\imp,Z}^{\ve,\mu} = \KK$ due to Proposition \ref{p:mDRed} (ii).
Thus,
$
\KK = \ker \Mc_n = \ker \Mc_n^*  
$
for all $n \in \NN \cup \{\infty\}$.
By Proposition \ref{p:mDRed}, $\KK$ is a reducing subspace for $\Mc_n$ for every $n \in \NN \cup \{\infty\}$. That is, denoting by $X_n$ the $\LL_{\ve_n,\mu_n}^2$-orthogonal complement $X_n = \LL_{\ve_n,\mu_n}^2 (\Om) \ominus \KK$, we obtain that
\begin{gather}  \label{e:Kreduction}
\text{ $\LL_{\ve_n,\mu_n}^2 (\Om)  = X_n  \oplus \KK$ 
reduces $\Mc_n$ to $\Mc_n= \Mc_n |_{X_n} \oplus 0$ for every $n \in \NN \cup \{\infty\}$.}
\end{gather}
Moreover, $\ker (\Mc_n |_{X_n}) = \{0\}$.
Since $\om_n \neq 0$ for each $n \in \NN$,   the corresponding eigenfield $\{\Ebf^n,\Hbf^n\}$ belongs to $ X_n $ for each $n \in \NN$. This and \eqref{e:Kreduction} implies 
$
0 = (\ve_n \Ebf^n , \Ebf^0)_\Om + (\mu_n \Hbf^n , \Hbf^0)_\Om $
for all $\{\Ebf^0,\Hbf^0\} \in \KK$ and all $n \in \NN $. 
Passing to the limit we get 
$
0 = (\ve_\infty \Ebf^\infty , \Ebf^0)_\Om + (\mu_\infty \Hbf^\infty , \Hbf^0)_\Om 
$
for all $\{\Ebf^0,\Hbf^0\} \in \KK$ (we have used (C6) here). 
This implies 
\begin{gather} \label{Kortinfty}
\{ \Ebf^\infty ,  \Hbf^\infty\} \in X_\infty, \text{ \quad where } X_\infty := \LL_{\ve_\infty,\mu_\infty}^2 (\Om) \ominus \KK .
\end{gather}

Since $\om_n \to 0 $, Theorem \ref{t:c} implies that $\{ \Ebf^\infty ,  \Hbf^\infty\}$ is an $\LL^2_{\ve_\infty,\mu_\infty}$-normalized eigenfield of $M_\infty$ corresponding to the eigenvalue $\om_\infty = 0$, and so,  $\{ \Ebf^\infty ,  \Hbf^\infty\}  \in \KK$. This contradicts
\eqref{Kortinfty}.
\end{proof}

\subsection{Proofs of Proposition \ref{p:real} and of Corollaries \ref{c:NoReala} and \ref{c:M00Eig}}
\label{s:real}

\begin{proof}[Proof of Proposition \ref{p:real}]
Let $\om \in \RR$ be an eigenvalue of m-dissipative operator $M_{\imp,Z}$ and let $ \{\Ebf , \Hbf \}$ be a corresponding eigenfield. Then, by Proposition \ref{p:mDRed} (ii), $ \{\Ebf , \Hbf \}$ is also an eigenfield associated with the same eigenvalue $\om$ for the adjoint operator $M_{\imp,Z}^*$. That is, $ \{\Ebf , \Hbf \} \in D (M_{\imp,Z}) \cap D (M_{\imp,Z}^*)$. Proposition \ref{p:weak} (i) implies that  $Z  \Hbf_\Tan  = \n \times \Ebf   = - Z^*  \Hbf_\Tan$, and 
so $0 = (Z+Z^*) \Hbf_\Tan = 2 \Re (Z) \Hbf_\Tan  $. This and  \eqref{e:Zsec} implies $\Hbf_\Tan = 0$, and, in turn, implies also $\n \times \E = 0$. Thus, $\{\Ebf , \Hbf \}$ is an eigenvector of the symmetric Maxwell operator $M_{0,0}$. 
\end{proof}

We prove now the following more general version of Corollary \ref{c:NoReala}.

\begin{corollary} \label{c:NoReal}
Assume that the impedance operator $Z$ satisfies \eqref{e:Z1}, \eqref{e:Z2}, and \eqref{e:Zsec}. 
Assume that the material parameters $\ve$ and $\mu$ are piecewise Lipschitz in the sense that the assumptions \eqref{e:Lip11}-\eqref{e:Lip2} are satisfied.
Then $\RR \cap \si (M_{\imp,Z}) = \{0\}$.
\end{corollary}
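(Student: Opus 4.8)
The plan is to combine the discreteness of the spectrum (Theorem~\ref{t:DiscSp}), the reduction of real eigenvalues to eigenvalues of the symmetric operator $M_{0,0}$ furnished by Proposition~\ref{p:real} and Remark~\ref{r:realEig}, and the unique continuation property for time-harmonic Maxwell systems with \emph{piecewise} regular coefficients established in \cite{BCT12}. First I would note that $0\in\si(M_{\imp,Z})$ always, since $\GG_{0}(\Om)\subseteq\ker M_{\imp,Z}$, and that by Theorem~\ref{t:DiscSp}(ii) every point of $\si(M_{\imp,Z})$ is an isolated eigenvalue; hence it suffices to rule out eigenvalues $\om\in\RR\setminus\{0\}$. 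Assume, towards a contradiction, that such an $\om$ exists with eigenfield $\{\E,\H\}$, $\|\{\E,\H\}\|_{\Le}>0$. Proposition~\ref{p:real} (this is where \eqref{e:Zsec} enters) shows that $\{\E,\H\}$ is a nontrivial solution of \eqref{e:EPM00}; equivalently, by Remark~\ref{r:realEig}, it is an eigenfield of $M_{0,0}$, so $\E,\H\in H_0(\curlm,\Om)$ and the tangential traces $\n\times\E$ and $\n\times\H$ (equivalently, $\H_\Tan$) both vanish on $\pa\Om$.

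Next I would enlarge the domain. Fix an open ball $B$ with $\overline\Om\subset B$, extend $\E$ and $\H$ by zero to $\widetilde\E,\widetilde\H$ on $B$, and extend $\ve,\mu$ by $I_{\RR^3}$ on $B\setminus\Om$ to fields $\widetilde\ve,\widetilde\mu$. Because $\E,\H\in H_0(\curlm,\Om)$, the zero extensions satisfy $\widetilde\E,\widetilde\H\in H(\curlm,B)$ with $\nabla\times\widetilde\E=\widetilde{\nabla\times\E}$ and $\nabla\times\widetilde\H=\widetilde{\nabla\times\H}$ — in other words, the vanishing of \emph{both} tangential traces is exactly what prevents surface currents from appearing across $\pa\Om$. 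The pair $\{\widetilde\E,\widetilde\H\}$ is then a full $L^2(B)$-solution of the time-harmonic Maxwell system $\ii\nabla\times\widetilde\H=\om\,\widetilde\ve\,\widetilde\E$, $-\ii\nabla\times\widetilde\E=\om\,\widetilde\mu\,\widetilde\H$ in $B$ (since $\om\neq0$, the divergence conditions $\nabla\cdot(\widetilde\ve\widetilde\E)=\nabla\cdot(\widetilde\mu\widetilde\H)=0$ then hold automatically), and it vanishes identically on the nonempty open set $B\setminus\overline\Om$. The coefficients $\widetilde\ve,\widetilde\mu$ are uniformly positive definite, bounded, and piecewise Lipschitz relative to the partition $\{\Om_1,\dots,\Om_N,\,B\setminus\overline\Om\}$ of $B$: each piece has a Lipschitz boundary by \eqref{e:Lip11}--\eqref{e:Lip12} together with the Lipschitz regularity of $\pa\Om$, and $\widetilde\ve,\widetilde\mu$ restrict to a Lipschitz field on each piece by \eqref{e:Lip2} and by constancy outside $\Om$.

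Finally I would apply the unique continuation property of \cite{BCT12}: since $B$ is connected, $\om\neq0$, and $\{\widetilde\E,\widetilde\H\}$ is a Maxwell solution with piecewise Lipschitz coefficients vanishing on a nonempty open subset, it must vanish throughout $B$; in particular $\E\equiv0\equiv\H$ in $\Om$, contradicting $\|\{\E,\H\}\|_{\Le}>0$. Hence $\RR\cap\si(M_{\imp,Z})\subseteq\{0\}$, and together with $0\in\si(M_{\imp,Z})$ this gives the claimed equality.

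The step I expect to be the main obstacle is verifying that the zero-extended problem falls squarely within the scope of \cite{BCT12}: the zero set of $\{\widetilde\E,\widetilde\H\}$ must be propagated not only through the homogeneous interior of $B$ but across the Lipschitz interfaces — the boundary $\pa\Om$ itself and the internal interfaces $\pa\Om_n$ — and it is precisely here that the \emph{piecewise} (rather than globally) Lipschitz version of unique continuation is indispensable, a globally Lipschitz Carleman argument being unavailable because $\widetilde\ve,\widetilde\mu$ jump across $\pa\Om$ and across the $\pa\Om_n$. A secondary point requiring care is the elementary but essential bookkeeping that an $H_0(\curlm,\Om)$-field extended by zero belongs to $H(\curlm,B)$ and that the extended pair is a genuine Maxwell solution on $B$ — which is where the hypothesis ``$\n\times\E=0$ and $\H_\Tan=0$ on $\pa\Om$'' supplied by Proposition~\ref{p:real} is used in full.
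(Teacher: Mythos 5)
Your proposal is correct and follows essentially the same route as the paper: Theorem \ref{t:DiscSp} to reduce to eigenvalues, Proposition \ref{p:real} (via assumption \eqref{e:Zsec}) to get $0=\n\times\Ebf=\Hbf_\Tan$, extension of the eigenfield by zero and of $\ve,\mu$ by $I_{\RR^3}$ outside $\Om$, and the piecewise-regular unique continuation theorem of \cite{BCT12} to force the eigenfield to vanish, contradicting its normalization. The only cosmetic difference is that you extend to a ball $B\supset\overline\Om$ while the paper extends to all of $\RR^3$, and you spell out the $H_0(\curlm,\Om)$ zero-extension bookkeeping that the paper leaves implicit.
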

\begin{proof}
As it is explained in Section \ref{ss:Disc}, $0$ is an eigenvalue of $M_{\imp,Z}$. 
Assume now that $\om \in \RR \setminus \{0\}$ and $\om \in \si (M_{\imp,Z})$. Theorem \ref{t:DiscSp}
implies that $\om$ is an eigenvalue of $M_{\imp,Z}$. Any associated $\Le$-normalized eigenfield $ \{\Ebf , \Hbf \}$ satisfies $0 = \n \times \Ebf = \Hbf_\Tan$ due to Proposition \ref{p:real}. We extend $\Ebf $ and $\Hbf$ by $0$ to $\RR^3 \setminus \Om$ and extend $\ve$ and $\mu$ by the constant matrix $I_{\RR^3}$  to $\RR^3 \setminus \Om$. Then the unique continuation result of \cite[Theorem 2.1]{BCT12} implies that $\Ebf  = \Hbf = 0$ a.e. in $\RR^3$. This contradicts  the fact that $ \{\Ebf , \Hbf \}$ is an $\Le$-normalized eigenfield and completes the proof.
\end{proof}

\noindent \emph{Proof of Corollary \ref{c:M00Eig}.}
Let us take an arbitrary $Z$ satisfying \eqref{e:Z1}, \eqref{e:Z2}, and \eqref{e:Zsec}, e.g.,
the identity operator $Z=I_\LLt$. Then, by Proposition \ref{p:real} and Remark \ref{r:realEig}, $\om$ is an eigenvalue of  \eqref{e:EPM00}
if and only if $\om \in \si (M_{\imp,Z}^{\ve,\mu})$. 
This allows us to prove Corollary \ref{c:M00Eig} in the way similar to Corollaries \ref{c:Opt} and \ref{c:OptSA}. Namely, Corollary \ref{c:conv} (ii) implies that for $\FF = L^\infty (\Om, \MM^\sym_{\al, \beta})^2$, 
\[
\hspace{10em} 0<\om_* = \min \{ \om \in \RR_+ \ : \ \om \in \Si [\FF] \setminus \{0\}\} .   \hspace{10em} \qed
\]

\section{Discussion and additional remarks} 
\label{s:discu}

\subsection{Coupled G-closure problems and H-closed feasible families.}\label{ss:coupled}
For Maxwell equations, the periodic homogenization has been intensively studied (see, e.g., \cite{LS18} and references therein). While applications of the homogenization method to optimization problems requires the non-periodic H-convergence, the description of relevant relaxed H-closed feasible families can be often  reduced to G-closure problems of periodic homogenization, see \cite{C00,R01,A02} and  Remark \ref{r:Gclosure}. In Example \ref{ex:PhC1}, the description of the feasible family $\FF$ via  $G_\theta$-closures for two-phase 'conductivity' problem \cite{MT85,T85,LC86} is possible since magnetic permeabilities $\wh \mu_1$ and $\wh \mu_2$ are equal. If $\wh \mu_1 \neq \wh \mu_2$ and $\ep_1 \neq \ep_2$ (e.g., if the magnetic susceptibility of silicon is not neglected) the description of the H-closure is connected with the problem of optimal \emph{coupled bounds} for periodic homogenization. In the 2-dimensional case, the corresponding  coupled G-closure problem is addressed in \cite{CM95} and \cite[Section 11.3]{C00}. However, we were not able to find in the existing mathematical literature a general solution of the coupled G-closure problem for two materials in the 3-dimensional case.

From the point of view of the problem of high-Q design for optical cavities, it is especially interesting to combine this coupled H-closure problem with the unique continuation problem of Section \ref{ss:uc}. We discuss this in the next remark.


\subsection{Extremal cases of nonunique continuation.}
\ \ As it was discussed in Remark \ref{r:nonUC}, the nonunique continuation results of \cite{D12} imply that for every $\al>0$ there exist $\beta>\al$, $\vep,\mu \in L^\infty (\Om, \MM^\sym_{\al, \beta})$, and $\om>0$ such that   the 'overdetermined' on $\pa \Om$
eigenvalue problem 
 \begin{equation} \label{e:EPM00R}
     \ii  \nabla \times \Hbf  = \omega \ve \Ebf, \qquad -\ii  \nabla \times \Ebf = \omega \mu \Hbf , \qquad 0 =\n \times \Ebf,  \qquad 0  = \Hbf_\Tan
     \end{equation}
possesses a nontrivial solution $\{\Ebf,\Hbf\}$. 

Eigenvalues $\om$ of \eqref{e:EPM00R}
are exactly eigenvalues of the symmetric Maxwell operator $M^{\ve,\mu}_{0,0}$. Their study by classical perturbation methods (see, e.g., \cite{Kato}) is difficult since $\si (M^{\ve,\mu}_{0,0}) = \CC$, and so, these eigenvalues are not isolated (in the sense that they are surrounded by the points of residual spectrum of $M^{\ve,\mu}_{0,0}$). 

The meaning of Remark \ref{r:realEig} is that the set of eigenvalues $\om$ of \eqref{e:EPM00R}
can be realized as the part $\RR \cap \si (M_{\imp,Z}^{\ve,\mu}) $ of the spectrum of an m-dissipative Maxwell operator $M_{\imp,Z}^{\ve,\mu}$ under the assumption that $Z$ satisfies \eqref{e:Z1}, \eqref{e:Z2}, and \eqref{e:Zsec}.
For instance, this statement is valid if we take the identity operator $Z=I$ in $\LLt$. The advantage of such a representation is that now all points of the spectrum of $M_{\imp,Z}^{\ve,\mu}$  are isolated eigenvalues (due to Theorem \ref{t:DiscSp}), and so, the perturbation theory of \cite{Kato} is applicable to the study of eigenvalues of \eqref{e:EPM00R}. 

From this point of view, various extreme cases of eigenvalues of \eqref{e:EPM00R} are especially interesting. Note that Corollary \ref{c:M00Eig} can be extended essentially without changes to any H-closed family $\FF \subseteq L^\infty (\Om, \MM^\sym_{\al, \beta})^2$ that  satisfies   the condition 
that the corresponding set $\Si_{0,0} [\FF] := \bigcup\limits_{\{\ve,\mu\} \in \FF} \si (M_{0,0}^{\ve,\mu})$ of achievable eigenvalues over $\FF$ intersects $\RR\setminus \{0\}$.
Namely, in this case the arguments  of the proof of   Corollary \ref{c:M00Eig} show that 
$ \om^+_* (\FF) = \inf (\RR_+ \cap \Si_{0,0} [\FF])$ is a positive eigenvalue of $M_{0,0}^{\ve_*,\mu_*}$ for a certain $\{\ve_*,\mu_*\} \in \FF$. 
 In a certain sense, the value of  $\om^+_* (\FF)$ quantifies  the nonunique continuation property  in the family $\FF$. If $\Si_{0,0} [\FF] \cap \RR_+ = \varnothing$, it is natural to put $\om^+_* (\FF) = +\infty$.
 
For the problem of high-Q design, the value of $\om^+_* (\FF)$ for the family $\FF$ of Example \ref{ex:PhC1} (i) is especially interesting, as well as the value of 
$\om^+_* (\FF)$ for the coupled H-closures discussed in Section \ref{ss:coupled}.
Note that $\om^+_* (\FF) <+\infty$ means that there exists a structure in $\FF$ with the quality-factor $Q=+\infty$ (at least on the level of the idealized model \eqref{e:EP}, \eqref{e:ZBC}).

Modifying  parameters $\al$ and $\beta$ (or $\ep_1$, $\ep_2$, $\wh \mu_1$, $\wh \mu_2$), one can quantify the nonunique continuation in other ways. The following corollary provides simplest examples of such quantifications. 

\begin{corollary} \label{c:M00abBounds}
Let $\om >0$, $\al_0>0$, and $\beta_0>\al_0$ be such that $\om$ is an eigenvalue of \eqref{e:EPM00R} for a certain pair $\{\vep,\mu\} \in L^\infty (\Om, \MM^\sym_{\al_0, \beta_0})^2$.

\item[(i)] Let us define $\al_* = \al_* (\om,\beta_0,\Om) $ and $\beta_* = \beta_* (\om,\al_0,\Om) $ by
\begin{gather*} 
\al_*  = \sup  \{\ \al \ge \al_0 : \ \om \ \text{ is an eigenvalue of \eqref{e:EPM00R}  for a certain  $\{\ve,\mu\} \in L^\infty (\Om, \MM^\sym_{\al, \beta_0})^2$} \}, \\
\beta_* = \inf  \{\ \beta \le \beta_0 : \ \om \ \text{ is an eigenvalue of \eqref{e:EPM00R}  for a certain $\{\ve,\mu\} \in L^\infty (\Om, \MM^\sym_{\al_0, \beta})^2$} \}. 
\end{gather*}
Then  $\al_* < \beta_0$, $\al_0 <\beta_*$, there exists $\{\ve,\mu\} \in L^\infty (\Om, \MM^\sym_{\al_*, \beta_0})^2$ such that $\om$ is an eigenvalue for \eqref{e:EPM00R},
and there exists $\{\ve,\mu\} \in L^\infty (\Om, \MM^\sym_{\al_0, \beta_*})^2$ such that $\om$ is an eigenvalue for \eqref{e:EPM00R}.

\item[(ii)] Let $\om_{\min} = \om^+_* (L^\infty (\Om, \MM^\sym_{\al_0, \beta_0})^2)$. Then there exists an interval $[\al_1, \beta_1] \subseteq [\al_0,\beta_0]$ extremal in the sense  that simultaneously 
\[
\om_{\min} = \om^+_* (L^\infty (\Om, \MM^\sym_{\al_1, \beta_1})^2), \quad \al_1 = \al_* (\om_{\min}, \beta_1,\Om), \quad
\text{ and } \quad \beta_1 = \beta_* (\om_{\min}, \al_1,\Om). 
\]
\end{corollary}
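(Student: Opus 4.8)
\emph{Plan of proof.} The plan is to work throughout with the fixed impedance operator $Z=I_\LLt$, which satisfies \eqref{e:Z1}, \eqref{e:Z2}, and \eqref{e:Zsec}; by Proposition \ref{p:real} and Remark \ref{r:realEig}, a real number $\om$ is then an eigenvalue of \eqref{e:EPM00R} for a pair $\{\ve,\mu\}$ (equivalently, of $M_{0,0}^{\ve,\mu}$) if and only if $\om\in\si(M_{\imp,I}^{\ve,\mu})$, with the same eigenfields. The argument rests on three elementary facts. \emph{(a) Nesting:} if $\al\le\al'$ and $\beta'\le\beta$ with $\al'\le\beta'$, then $L^\infty(\Om,\MM^\sym_{\al',\beta'})^2\subseteq L^\infty(\Om,\MM^\sym_{\al,\beta})^2$, so the corresponding sets of eigenvalues of \eqref{e:EPM00R} are nested. \emph{(b) Stability of windows under H-limits:} each class $L^\infty(\Om,\MM^\sym_{\al,\beta})$ is H-closed (Remark \ref{r:Hcon}, cf.\ \cite{A02}); since an H-limit is unchanged by deleting finitely many initial terms, whenever $\ve_n\Hcon\ve_\infty$ with $\ve_n\in L^\infty(\Om,\MM^\sym_{\al_n,\beta_n})$, $\al_n\to\al_\infty$ and $\beta_n\to\beta_\infty$, then $\ve_\infty\in L^\infty(\Om,\MM^\sym_{\al_\infty,\beta_\infty})$ (apply H-closedness of $L^\infty(\Om,\MM^\sym_{\al_\infty-\eta,\beta_\infty+\eta})$ to the tail and let $\eta\downarrow 0$). \emph{(c) No real eigenvalue for constant coefficients:} constant matrix-fields are Lipschitz, so Corollary \ref{c:NoReal} with $Z=I$ gives $\RR\cap\si(M_{\imp,I}^{cI_{\RR^3},cI_{\RR^3}})=\{0\}$ for every $c>0$; in particular $\om>0$ is not an eigenvalue of \eqref{e:EPM00R} with constant coefficients.

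\emph{Part (i).} Let $S_\al$ be the set of $\al\ge\al_0$ for which $\om$ is an eigenvalue of \eqref{e:EPM00R} for some $\{\ve,\mu\}\in L^\infty(\Om,\MM^\sym_{\al,\beta_0})^2$. By fact (a) it is a subinterval of $[\al_0,\beta_0]$ containing $\al_0$ (and nonempty by hypothesis), so $\al_*=\sup S_\al\in[\al_0,\beta_0]$. To see $\al_*\in S_\al$, take a sequence $\al_n\in S_\al$ with $\al_n\to\al_*$ and, for each $n$, a pair $\{\ve_n,\mu_n\}\in L^\infty(\Om,\MM^\sym_{\al_n,\beta_0})^2$ carrying an $\LL^2_{\ve_n,\mu_n}(\Om)$-normalized eigenfield $\{\Ebf^n,\Hbf^n\}$ of $M_{\imp,I}^{\ve_n,\mu_n}$ at $\om$. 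By the H-compactness of $L^\infty(\Om,\MM^\sym_{\al_0,\beta_0})$, pass to a subsequence with $\ve_n\Hcon\ve_\infty$, $\mu_n\Hcon\mu_\infty$; since $\al_n\to\al_*$ while the upper bounds stay $\beta_0$, fact (b) gives $\ve_\infty,\mu_\infty\in L^\infty(\Om,\MM^\sym_{\al_*,\beta_0})$. Theorem \ref{t:c}, applied to the operators $M_{\imp,I}^{\ve_n,\mu_n}$ with the constant nonzero eigenvalues $\om_n\equiv\om$, then produces an eigenfield of $M_{\imp,I}^{\ve_\infty,\mu_\infty}$ at $\om$, i.e.\ $\al_*\in S_\al$; this is exactly the asserted existence statement. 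Moreover $\al_*<\beta_0$: if $\al_*=\beta_0$, then $\om$ would be an eigenvalue of \eqref{e:EPM00R} for $\ve\equiv\mu\equiv\beta_0 I_{\RR^3}$, contradicting fact (c). The assertions about $\beta_*$ follow by the same argument applied to the set $S_\beta$ of $\beta\le\beta_0$ for which $\om$ is an eigenvalue of \eqref{e:EPM00R} for some $\{\ve,\mu\}\in L^\infty(\Om,\MM^\sym_{\al_0,\beta})^2$, using a sequence $\beta_n\in S_\beta$ with $\beta_n\to\beta_*$ and, in the degenerate case $\beta_*=\al_0$, the constant coefficients $\al_0 I_{\RR^3}$.

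\emph{Part (ii).} Since, by hypothesis, some nonzero real eigenvalue of \eqref{e:EPM00R} is achievable over the H-closed family $L^\infty(\Om,\MM^\sym_{\al_0,\beta_0})^2$, the extension of Corollary \ref{c:M00Eig} recorded above gives $\om_{\min}=\om^+_*(L^\infty(\Om,\MM^\sym_{\al_0,\beta_0})^2)>0$, and $\om_{\min}$ is itself an eigenvalue of \eqref{e:EPM00R} for some pair in that family. Building $[\al_1,\beta_1]$ is then a double application of Part (i) with $\om:=\om_{\min}$: put $\al_1:=\al_*(\om_{\min},\beta_0,\Om)$, so that $\al_0\le\al_1<\beta_0$ and $\om_{\min}$ is achievable over $L^\infty(\Om,\MM^\sym_{\al_1,\beta_0})^2$; then put $\beta_1:=\beta_*(\om_{\min},\al_1,\Om)$, so that $\al_1<\beta_1\le\beta_0$ and $\om_{\min}$ is achievable over $L^\infty(\Om,\MM^\sym_{\al_1,\beta_1})^2$. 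Thus $[\al_1,\beta_1]\subseteq[\al_0,\beta_0]$, and $\beta_1=\beta_*(\om_{\min},\al_1,\Om)$ holds by construction. For $\al_1=\al_*(\om_{\min},\beta_1,\Om)$: achievability of $\om_{\min}$ over $L^\infty(\Om,\MM^\sym_{\al_1,\beta_1})^2$ places $\al_1$ in the set whose supremum is $\al_*(\om_{\min},\beta_1,\Om)$, so $\al_*(\om_{\min},\beta_1,\Om)\ge\al_1$; conversely $\beta_1\le\beta_0$ and fact (a) make $\beta\mapsto\al_*(\om_{\min},\beta,\Om)$ nondecreasing, whence $\al_*(\om_{\min},\beta_1,\Om)\le\al_*(\om_{\min},\beta_0,\Om)=\al_1$. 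Finally, $L^\infty(\Om,\MM^\sym_{\al_1,\beta_1})^2\subseteq L^\infty(\Om,\MM^\sym_{\al_0,\beta_0})^2$ forces $\inf(\RR_+\cap\Si_{0,0}[L^\infty(\Om,\MM^\sym_{\al_1,\beta_1})^2])\ge\om_{\min}$; since $\om_{\min}$ lies in the smaller set this infimum equals $\om_{\min}$, and by the extension of Corollary \ref{c:M00Eig} applied to the H-closed family $L^\infty(\Om,\MM^\sym_{\al_1,\beta_1})^2$ it is attained, i.e.\ $\om_{\min}=\om^+_*(L^\infty(\Om,\MM^\sym_{\al_1,\beta_1})^2)$. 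This is the extremality of $[\al_1,\beta_1]$.

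\emph{Expected main obstacle.} The only non-formal step is the compactness argument in Part (i): one must know that an H-limit of symmetric matrix-fields whose coercivity windows $[\al_n,\beta_0]$ converge to $[\al_*,\beta_0]$ still has its spectrum inside $[\al_*,\beta_0]$. This is fact (b), a consequence of the H-closedness of each fixed class together with the insensitivity of H-limits to finitely many terms, so I do not anticipate genuine difficulty; the rest — the nesting (a), the monotonicity of $\al_*$ in $\beta$, and the already-available extension of Corollary \ref{c:M00Eig} — is bookkeeping.
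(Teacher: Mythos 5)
Your proposal is correct and follows essentially the same route as the paper's (very terse) proof: identify real eigenvalues of \eqref{e:EPM00R} with eigenvalues of $M_{\imp,I}^{\ve,\mu}$, use the H-compactness of the coefficient classes together with Theorem \ref{t:c} applied to the constant eigenvalue sequence, get the strict inequalities from the no-real-eigenvalue result for (constant, hence Lipschitz) coefficients, and handle (ii) by the extension of Corollary \ref{c:M00Eig} plus iteration of (i). You merely spell out details the paper leaves implicit (the stability of the coercivity window under H-limits and the monotonicity bookkeeping), which is consistent with, not different from, the paper's argument.
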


\begin{proof}
Similarly to Corollaries \ref{c:conv}, \ref{c:OptSA}, and \ref{c:M00Eig},
the proof is obtained from the H-compactness  \eqref{e:M2comp} and Theorem \ref{t:EigFree} by iterative applications of Theorem \ref{t:c} to eigenvalues of $M_{\imp,I}^{\ve,\mu}$. Note that 
 $\al_* < \beta_0$ and $\al_0 <\beta_*$ follow from Corollary \ref{c:NoReala}.
\end{proof}

\subsection{Optimization with excluded zero eigenvalue.} Recall that $\Ic = [\vphi_-,\vphi_+] \subset \RR$ is an arbitrary fixed compact interval and 
$d_\Ic (\om)= \min_{\vphi_- \le \vphi \le \vphi_+} |\om - \vphi|$ is the distance from an eigenvalue $\om$ to $\Ic$.

The zero eigenvalue of Maxwell operators $M_{\imp,Z}^{\ve,\mu}$ has no special significance for some of applications and, in particular, for high-Q resonators. The part (ii) of Corollary \ref{c:conv} is designed to exclude the zero eigenvalue from the $d_\Ic$-minimization  problem of Section \ref{ss:OptRes}. 

However,  one has to take care about the abstract possibility of the case of the empty spectrum for an m-dissipative  operator $T_1 = T|_{X \ominus \ker T}$ that appears after factorization of the kernel
of an m-dissipative operator $T$ in a Hilbert space $X$. (In this case the inverse $T_1^{-1}$ is quasi-nilpotent. Compact dissipative quasi-nilpotent operators have been studied in the theory of abstract Volterra operators, see \cite{SFBK10}.)
The physical intuition says that, in the context of wave equations, such operators are very exceptional.   However, taking into account the example of an m-dissipative operator with empty spectrum for the damped string equation \cite{CZ95}, it is difficult to exclude the possibility that there exist a Lipschitz domain $\Om$, bounds $\al$ and $\beta$ satisfying $0<\al<\beta$,  material parameters $\{\ve,\mu\} \in \Mabsym^2 $, and a certain m-dissipative boundary condition (in the sense of \cite{EK22}) such that the corresponding m-dissipative Maxwell operator $M$ has $\si (M) = \{0\}$. 

Therefore, the exclusion of the zero eigenvalue from the optimization requires the following modification of the existence  result of Corollary \ref{c:Opt}: \emph{if $\FF$ is an H-closed subset of $\Mabsym^2$ satisfying $\Si [\FF] \neq \{0\}$, then there exist a feasible material parameter pair $\{ \ve_* , \mu_* \} \in \FF$ and an achievable eigenvalue $\om_* \in  \si (M_{\imp,Z}^{\ve_*,\mu_*}) \setminus \{0\}$ such that 
$ d_\Ic (\om_*) = \min\limits_{\om \in \Si [\FF] \setminus \{0\}}   d_\Ic (\om)$.} 
  This result can be proved in a way similar to the proof of Corollary \ref{c:Opt} with the additional  use of statement (ii) of Corollary \ref{c:conv}.

\section*{Appendix}
\appendix

\section{M-dissipativity of generalized impedance boundary conditions}\label{a:mD}

 It was proved in \cite{EK22} by the boundary tuple method that,
\begin{gather} \label{e:tmdis}
\text{under assumptions 
 \eqref{e:Z1}-\eqref{e:Z2}, the operator $M_{\imp,Z} $ is m-dissipative in $\Le$.}
\end{gather}
\emph{We give here a more elementary proof of \eqref{e:tmdis} that relies on the Lax-Milgram lemma.}

The integration by parts, the boundary condition $\n   \times \Ebf   = Z  \Hbf_\Tan$, and the accretivity of $Z$ imply 
$    2\Im (M_{\mathrm{imp},Z} \ \Phi,\Phi)_\Le 
   = 
   -2\Re ( \n\times \Ebf, \Hbf_{\mathrm{tan}})_\paOm 
   = -2  \Re (Z  \Hbf_\Tan, \Hbf_{\mathrm{tan}})_\paOm 
   \le 0  
   $  
for all $\Phi=\{\Ebf,\Hbf\}\in D(M_{\mathrm{imp},Z}) $. 
This shows that $M_{\imp,Z}$ is dissipative.

Let $ \om = \ii \kappa $ with a certain $\kappa>0$.
If we show that $\ran(\om - M_{\mathrm{imp},Z}) = \Le$, then \cite[Section 1.1]{P59} implies that $M_{\mathrm{imp},Z}$ is m-dissipative. For that it will suffice to show that the equation
\[
 \om \mat{\Ebf \\ \Hbf} - M_{\mathrm{imp},Z} \mat{ \Ebf\\ \Hbf} = \mat{\fbf^1 \\ \fbf^2}
\]
has a  solution $\Phi=\{\Ebf,\Hbf\}  \in D(M_{\mathrm{imp},Z})$ for all $\fbf = \{\fbf^1,\fbf^2\}\in L^2 (\Om,\CC^6)$. 

We  consider for $\Hbf \in \Himp$  the following variational equation, 
\begin{equation}\label{20}
  b(\H,\wbf)= 
-\frac{1}{\om} ( \fbf^1,\nabla\times \wbf)_\Om  - {\rm i}(\mu  \fbf^2, \wbf)_\Omega 
\quad \mbox{ for all } \wbf\in \Himp,
 \end{equation}
where
$
 b(\ubf,\wbf) =-{\rm i}\om( \mu \ubf,\wbf)_\Omega + 
 \frac{\rm i}{\om}
  (\ve^{-1}\nabla \times \ubf,\nabla\times \wbf)_\Omega 
  +(Z \ubf_{\mathrm{tan}},\wbf_{\mathrm{tan}})_{\partial\Omega}
$  
is a bounded and coercive sesquilinear form on $\Himp$. 
Indeed, the boundedness of $b (\cdot,\cdot)$ is clear from \eqref{e:NormHimp}.
The coercivity  of $(Z\cdot,\cdot)_\paOm$ in $\LLt$ implies (see, e.g., \cite[Remark 4.2.7]{ACL18}) 
that there exists $\vth \in \RR$ such that $\Re \left( \ee^{\ii \vth} (Z\vbf,\vbf)_\LLt \right) \gtrsim \|\vbf\|_\paOm^2$ for all $\vbf \in \LLt$. 
From the accretivity of $Z$, one obtains that the corresponding rotation angle  $\vth$ can be chosen in the interval $(-\de,\de)$ with arbitrary small $\de \in (0, \pi/4]$. This yields 
\[
 |b(\ubf,\ubf)| \ge \cos (\pi/4) \left(   
\kappa ( \mu \ubf,\ubf)_\Omega + 
 \frac1{\kappa} (\ve^{-1}\nabla \times \ubf,\nabla\times \ubf)_\Omega  \right)
  + |( Z \ubf_{\mathrm{tan}},\ubf_{\mathrm{tan}})_{\partial\Omega}|
  \gtrsim  \|\ubf\|_{\Himp}^2 
\] 
for all $\ubf\in \Himp$, 
which proves that $b$ is coercive.

The Lax-Milgram lemma implies that for every $\fbf = \{\fbf^1,\fbf^2\}\in L^2 (\Om,\CC^6)$ there exists a unique solution $\Hbf \in \Himp$ to \eqref{20}.
Now, we define $\Ebf = \om^{-1} \left(\fbf_1+  \ii \ve^{-1} \nabla\times \Hbf\right)$. 
Then $\Ebf \in L^2(\Omega,\C^3)$ and, for all $\wbf \in \Himp$,  
\begin{equation}\label{5}
  - \ii \om( \mu \Hbf, \wbf)_\Omega + 
  (\Ebf,\nabla\times \wbf)_\Omega 
  +(Z \Hbf_\Tan, \wbf_\Tan)_{\partial\Omega}
  =- {\rm i}(\mu  \fbf_2, \wbf)_\Omega \ .
\end{equation}
Choosing $\wbf \in H_0(\curlm,\Omega)$, one sees that  $\nabla \times \Ebf \in L^2(\Omega,\C^3)$ and $\om \mu \Hbf + {\rm i} \nabla\times \Ebf = \mu \fbf_2$. Hence, $\Phi = \{\Ebf, \Hbf\}$ solves the two Maxwell equations under consideration. 

It remains to prove that $\Phi = \{\Ebf, \Hbf\}$ satisfies the boundary condition $\n   \times \Ebf   = Z  \Hbf_\Tan$. In equation \eqref{5} we perform the integration by parts in the second term and make use of the second Maxwell equation. Then, for all $\wbf \in \Himp$,
\begin{gather}
  - \< \n \times \Ebf , \wbf_\Tan \>_{\partial\Omega}+ 
  (Z \Hbf_\Tan , \wbf_\Tan )_{\partial\Omega}
  =0, \label{e:yOnPaOm}
\end{gather}
where $  \< \cdot , \cdot  \>_{\partial\Omega}$ is the pairing between $H^{-1/2}(\Div_{\partial\Omega},\pa \Om)$ and $H^{-1/2}(\curl_{\partial\Omega}, \pa \Om)$ w.r.t. the pivot space  $\LLt$. For basic facts concerning this duality, and in particular, for the equality 
\begin{gather} \label{e:wTan}
\{\wbf_\Tan \, : \, \wbf \in \Himp\} = \LLt \cap H^{-1/2}(\curl_\paOm ,\pa \Om), 
\end{gather}
we refer to  \cite{BCS02,BHPS03,EK22}.
Since $\LLt \cap H^{-1/2}(\Div_{\partial\Omega},\pa \Om)$ is dense in $ \LLt$ (see Section \ref{ss:Hodge}), it follows from \eqref{e:yOnPaOm}  and \eqref{e:wTan} that  $\n \times \Ebf \in \LLt$ and 
$\n \times \Ebf - Z \Hbf_\Tan=0$ in the sense of $\LLt$. This completes the proof of \eqref{e:tmdis}.

\section{Proof of Proposition \ref{p:Hcon}, equivalence of two H-convergences}\label{s:EquivHconv}

We were not able to reach the publication of  Tartar \cite{T85}, where Proposition \ref{p:Hcon} is originated from.
However, in our opinion, the proof given below should be very close to the original proof of \cite{T85}  since it is heavily based on the methods of compensated compactness and oscillating  test functions of Murat \& Tartar \cite{MT78} (see also \cite{A02,T09}).

\begin{proof}[Proof of Proposition \ref{p:Hcon}.] Recall that $\<\cdot,\cdot \> $ denotes the standard inner product in $\CC^3$ or $\RR^3$. By $\one $ we denote the constant function equal to $1$. The proof is split in several steps.
 
\emph{Step 1. The construction of test functions. }
 Suppose that $\{A_n\}_{n \in \NN} \subseteq \Mab$ H-converges to $A_\infty$. For any $y \in \RR^3$, there exists a sequence of functions $\{w_n\}_{n \in \NN} $ such that $w_n  \Weak   \langle y,x\rangle \one$  in  $H^1(\Om)$, $\nabla w_n \Weak \one y  $ in $L^2(\Omega,\C^3)$, $A_n \nabla w_n \rightharpoonup A_\infty \one y $ in $L^2(\Omega,\C^3)$, and
$ \nabla\cdot (A_n\nabla w_n)  = g $
  for a certain $g \in H^{-1}(\Omega)$, where $\one y$ stands for the constant $\RR^3$-valued function equal to $y$.
 The existence of such a sequence $\{w_n\}_{n \in \NN} $ is a part of the oscillating test
function method of  \cite{MT78}  (see \cite[Section 1.3.1]{A02}).



\emph{Step 2. Proof of 'only if'.} Suppose that $\{A_n\}_{n \in \NN} \subseteq \Mab$ H-converges to $A_\infty$.
Suppose that  $\Ebf^n \in L^2 (\Om,\RR^3)$ and  $\Dbf^n = A_n \Ebf^n \in L^2 (\Om,\RR^3)$ satisfy the hypotheses \eqref{e:DE1}-\eqref{e:DE4} for all $n\in \NN$. 
For each $n \in \NN \cup \{\infty\}$, we consider now  certain  measurable defined a.e. functions $\Ebf^n$, $\Dbf^n$, $A_n$, and $w_n$ representing the corresponding equivalence classes $\Ebf^n, \Dbf^n \in L^2 (\Om,\RR^3)$,  $A_n \in \Mab$, and, resp., $w_n \in H^1 (\Om)$.
Since a countable union of sets of measure zero is also of zero measure, there exists a set $\Om_0$ of measure zero such that, for all $\NN$, the functions $\Ebf^n$, $\Dbf^n$, $A_n$,  $w_n $, and $A_\infty$ are defined 
on $x \in \Om \setminus \Om_0$ and $\Dbf^n = A_n \Ebf^n$ for all $x \in  \Om \setminus \Om_0$.

Evaluating $\langle \Dbf^n  - A_n \nabla w_n, \Ebf^n - \nabla w_n \rangle$  pointwise in $\Om$, one sees that for a.a. $x \in \Omega \setminus \Om_0$,
\begin{gather*}
 \langle \Dbf^n  - A_n \nabla w_n, \Ebf^n - \nabla w_n \rangle =
 \langle A_n (\Ebf^n-\nabla w_n), \Ebf^n - \nabla w_n \rangle \ge \alpha |\Ebf^n - \nabla w_n|^2\ge 0\;. 
\end{gather*}
Observing  that $\nabla \times (\Ebf^n-\nabla w_n) = \nabla \times \Ebf^n$ and $\nabla\cdot A_n (\Ebf^n-\nabla w_n) = \nabla\cdot \Dbf^n - g$, we infer from \eqref{e:DE3}-\eqref{e:DE4} that $\{\nabla \times (\Ebf^n-\nabla w_n) \}_{n \in \NN}$ is a relatively compact subset of $H^{-1}(\Omega,\C^3)$ and that $\{\nabla\cdot A_n (\Ebf^n-\nabla w_n) \}_{n \in \NN}$ is a relatively compact subset of $H^{-1}(\Omega)$. Hence, using  \cite[Lemma 1.3.1]{A02} (which is a version of the div-curl lemma), we infer that,  after a possible replacement of $\Om_0$ with a larger  measure zero set $\Om_0$, there is a dense countable subset $Y$ of $\RR^3$ such that \linebreak
$
 \langle \Dbf^\infty - A_\infty y, \Ebf^\infty - y \rangle \ge 0 $  for all $x \in \Om \setminus \Om_0$ and all $y \in Y$.
Using now the density of $Y$ in $\RR^3$, one obtains   
$  \langle \Dbf^\infty - A_\infty y, \Ebf^\infty - y \rangle \ge 0 $ 
 for all $x \in \Om \setminus \Om_0$ and all  $ y \in \RR^3$.

Let $\underline{x}\in \Omega \setminus \Om_0$. 
Then, taking $y = \Ebf^\infty(\underline{x}) - tz$ with arbitrary $z \in \R^3$ and  $t>0$, we obtain  
\[
 \langle \Dbf^\infty(\underline{x}) - 
 A_\infty(\underline{x}) \Ebf^\infty(\underline{x})+
 t A_\infty(\underline{x}) z , z \rangle\ge 0 \quad \text{ for all $z \in \R^3$ and  all $t>0$}.
\] 
Letting $t\to 0^+$ proves that $   \langle \Dbf^\infty(\underline{x}) - 
  A_\infty(\underline{x}) \Ebf^\infty(\underline{x}) , z \rangle\ge 0 $ \  for all $\underline{x} \in \Om \setminus \Om_0$ and all $ z \in \RR^3$.
Thus,  the equality $\Dbf^\infty =A_\infty \Ebf^\infty$ holds a.e. in $\Om$.



\emph{Step 3. Proof of `if'.}
Suppose that $\Dbf^\infty =A_\infty \Ebf^\infty$ a.e. in $\Om$ for all weak $L^2$-limits $\{\Ebf^\infty, \Dbf^\infty\}$ of sequences $\{\Ebf^n, \Dbf^n\}$ satisfying \eqref{e:DE1}-\eqref{e:DE4}.
 
Given $f\in H^{-1}(\Omega)$, we consider for each $n \in \NN$ the weak solution to the Dirichlet  problem 
\[
 \nabla\cdot (A_n \nabla u_n) = f \mbox{ in } \Omega \ , \qquad \quad u_n =0 \mbox{ on } \partial\Omega\;.
\]
It can be seen from the Lax-Milgram lemma that the sequence $\{u_n\}_{n \in \NN}$ is bounded in $H^1_0(\Omega)$.  Hence, there exists a weakly convergent subsequence (for brevity also indexed by $n$) such that $u_n \Weak u_\infty$ in $ H^1_0(\Omega)$ for a certain $u_\infty \in H^1_0(\Om)$. Set $\Ebf^n =\nabla u_n$. Then $\Ebf^n \rightharpoonup \Ebf^\infty$ in $L^2(\Omega,\C^3)$ with $\Ebf^\infty =\nabla u_\infty$, and $\nabla \times \Ebf^n =0$ for all $n\in \NN$. We put now  $\Dbf^n=A_n \Ebf^n$, and observe that $\nabla\cdot \Dbf^n = f \in H^{-1}(\Omega)$ and that the sequence $\{\Dbf^n\}_{n \in \NN}$ is bounded in $L^2(\Omega,\CC^3)$. Hence, the sequence $\{\Dbf^n\}_{n \in \NN}$ has 
a subsequence with a certain weak $L^2$-limit $\wlim_{n \to \infty} \Dbf^n = \Dbf^\infty$. 

We see that this construction of sequences $\{\Ebf^n\}_{n \in \NN}$ and $\{\Dbf^n\}_{n \in \NN}$ ensures that they satisfy  assumptions \eqref{e:DE1}-\eqref{e:DE4}. Thus,  $\Dbf^\infty = A_\infty \Ebf^\infty = A_\infty \nabla u_\infty$ a.e. in $\Om$ and 
\[
\wlim A_n \nabla u_n = \wlim \Dbf^n  =  \Dbf^\infty = A_\infty \nabla u_\infty \quad \text{in the sense of $L^2 (\Om,\CC^3)$.}
\]

Furthermore, for arbitrary $\psi \in H^1_0(\Omega)$,
\[
\<f,\psi\>_\Om = \<\nabla\cdot \Dbf^n, \psi\>_\Om = -  (\Dbf^n, \nabla \psi)_\Om = - \lim_{n \to \infty}  (\Dbf^n, \nabla \psi)_\Om = - (\Dbf^\infty,\nabla \psi)_\Om = \<\nabla\cdot \Dbf^\infty,\psi\>_\Om \;,
\]
where $\<\cdot,\cdot\>_\Omega$ is  the pairing of $H^{-1}(\Omega)$ and $H^1_0(\Omega)$. This implies 
$
\nabla\cdot (A_\infty \nabla u_\infty) = \nabla \cdot \Dbf^\infty =f.
$

Thus, the sequence  $\{A_n\}_{n\in \NN}$ H-converges to $A_\infty$. This completes the proof.
\end{proof}

\section{Proof of Theorem \ref{t:HDecOm}, Helmholtz-Hodge decompositions in $\Om$}
\label{a:HdecOm}

Let $\xi \in \Mabsym$ and  $\ubf \in L^2_\xi (\Om,\CC^3)$. 
Let $\Ga^1, \Ga^2, \dots, \Ga^N$ be the connected components of $\partial\Omega$ 
(note that $N \in \NN$ since $\Om$ is a Lipschitz domain). 
Let $\<\cdot,\cdot \>_{L^2 (\pa \Om)}$ be the sesquilinear paring of $H^{-1/2} (\pa \Om)$ and $H^{1/2} (\pa \Om)$ w.r.t. the pivot space  $L^2 (\pa \Om)$ of  $L^2$-scalar-fields on $\pa \Om$ (we use similarly the notation $\<\cdot,\cdot \>_{L^2 (\Ga^j)}$).

Let $\grad_0:H^1_0 (\Om) \to L_\xi^2 (\Omega,\CC^3)$ be the map $u \mapsto  \nabla u $ understood as a bounded operator from $H^1_0 (\Om)$ to $L^2_\xi (\Omega,\CC^3)$. The Poincaré inequality implies that $\ran \grad_0 = \grad H^1_0 (\Om)$  is a closed subspace in $ L_\xi^2 (\Omega,\CC^3)$ and that 
\begin{gather} \label{e:grad0homeom}
\text{$\grad_0$ considered as an operator from $H^1_0 (\Om)$  to $\grad H^1_0 (\Omega) $ becomes a homeomorphism.}
\end{gather}
The integration by parts implies that
the orthogonal complement $L_\xi^2 (\Omega,\CC^3) \ominus \grad H^1_0 (\Om)$ is the closed subspace $H(\Div \xi 0,\Om)$ defined by \eqref{e:HdivXi0}. In the case $\xi \equiv I_{\RR^3}$, we denote this space $H(\Div  0,\Om)$ and equip it with the $L^2 (\Om,\CC^3)$-norm. Recall that $\curlm_1:H^1 (\Om,\CC^3) \to L^2 (\Omega,\CC^3)$ is the bounded operator defined in Section \ref{ss:Hodge}.

\begin{proof}[Proof of Theorem \ref{t:HDecOm}.] 
\emph{Let us prove the statement (i).}
In order to obtain the `weighted' Helmholtz-Hodge decomposition of Theorem \ref{t:HDecOm},  consider the bounded operator $\xi^{-1} \curlm_1:H^1 (\Om,\CC^3) \to L_\xi^2 (\Omega,\CC^3)$ defined by the differential operation 
$  \ubf (\cdot) \mapsto (\xi (\cdot))^{-1} (\nabla \times  \ubf) (\cdot)$, and consider  its image  
$\ran (\xi^{-1} \curlm_1) = \xi^{-1} \curlm H^1 (\Om,\CC^3)$.
The distributional equality $\nabla \cdot (\nabla \times  \ubf) = 0$ implies that 
$
\xi^{-1} \curlm H^1 (\Om,\CC^3) \subseteq H(\Div \xi 0,\Om) .
$ 
The result on the existence of vector potentials of  \cite[Theorem I.3.4]{GR86} (see also \cite{M03}) represents 
$\curlm H^1 (\Om,\CC^3) = \{ \curlm \ubf  \ : \ubf \in  H^1 (\Om,\CC^3)\}$ as 
\begin{gather} \label{e:curlH1}
\curlm H^1 (\Om,\CC^3) = \{ \wbf \in H(\Div  0,\Om) \ : \ \<\ga_\nr (\wbf) , \one \>_{L^2 (\Ga_j)} = 0, \quad 1 \le j \le N\},
\end{gather}
where the bounded operator $\ga_\nr  : H(\Div  0,\Om) \to H^{-1/2} (\pa \Om)$ is the  \emph{normal-component trace} obtained as the extension by continuity   of $\wbf \mapsto \n \cdot \wbf |_\paOm$  from $H^1 (\Om,\CC^3) \cap H(\Div  0,\Om)$  (this can be done with the help of the integration by parts for the $\Div$-operator).
Note that the integration by parts gives for every $\wbf \in H(\Div  0,\Om)$
\begin{gather*} 
0 = (\wbf, \nabla \one)_\Om =  \<\ga_\nr (\wbf) , \one \>_{L^2 (\pa \Om)} - (\nabla \cdot \wbf, \one)_{L^2 (\Om)} = \<\ga_\nr (\wbf) , \one \>_{L^2 (\pa \Om)} .
\end{gather*}

From this equality  and the fact that $\wbf \mapsto \<\ga_\nr (\wbf) , \one \>_{L^2 (\Ga_j)}$ are continuous functionals on $H(\Div  0,\Om)$,
we see that 
$\xi^{-1} \curlm H^1 (\Om,\CC^3)$ is a closed subspace of $H(\Div \xi 0,\Om) $
 of co-dimension $N_1 \le N-1$. 
So the $L^2_\xi$-orthogonal complement 
$ \wt \HH_2 ( \Om,\xi) := H(\Div \xi 0,\Om) \ominus \xi^{-1} \curlm H^1 (\Om,\CC^3)$
is a space of the dimension $N_1 \le N-1$.

Let us show that $\wt \HH_2 ( \Om,\xi)$ is $(N-1)$-dimensional and that $\wt \HH_2 ( \Om,\xi)$ is equal to 
\[  \HH_2 ( \Om,\xi) := 
 \{ \wbf = \nabla q \; : \ q\in H^1(\Om), \ 
 \nabla\cdot (\xi \nabla q)=0 \mbox{ in } \Om, \text{ and }
 q |_\paOm \in \KK_0 (\pa \Om) \} ,
\]
where $\KK_0 (\pa \Om)$ is the $N$-dimensional space of locally constant $\CC$-valued functions  on $\pa \Om$.
Indeed, $ \HH_2 ( \Om,\xi) \subseteq H(\Div \xi 0,\Om) $.
The orthogonalities $\<\ga_\nr (\wbf) , \one \>_{L^2 (\Ga_j)} =0$ in the representation \eqref{e:curlH1} show that, for   $q |_\paOm \in \KK_0 (\pa \Om)$ and $\vbf \in H^1 (\Om,\CC^3)$, we have 
\[
(\xi^{-1} \curlm \vbf ,  \nabla q )_{L^2_\xi (\Om,\CC^3)} = ( \curlm \vbf ,  \nabla q )_\Om = \<\ga_\nr (\curlm \vbf) \,  , \, q|_\paOm \, \>_{L^2 (\pa \Om)} - \<  \nabla \cdot (\nabla \times \vbf ) , q \>_\Om = 0 .
\]
That is, $ \HH_2 ( \Om,\xi)$ and $\xi^{-1} \curlm H^1 (\Om,\CC^3)$ are $L^2_\xi$-orthogonal. 
Since  $ \HH_2 ( \Om,\xi)$ is $(N-1)$-dimensional, we get $\HH_2 ( \Om,\xi) = \wt \HH_2 ( \Om,\xi)$. This finishes the proof of statement (i) of Theorem \ref{t:HDecOm}.

\emph{Let us prove the statement (ii) of Theorem \ref{t:HDecOm}.}
The kernel  $ \ker \curlm_1:= \{\vbf \in H^1 (\Om,\CC^3) : \nabla \times  \vbf = 0 \} $  is a closed subspace of $H^1 (\Om,\CC^3)$. Let us consider  the orthogonal complement $ (\ker \curlm_1)^\perp = H^1 (\Om,\CC^3) \ominus \ker \curlm_1$ and the bounded bijective operator  $\xi^{-1} \curlm_{\perp}:(\ker \curlm_1)^\perp  \to \xi^{-1} \curlm H^1 (\Om,\CC^3)$ defined as the restriction of $\xi^{-1} \curlm_1$ to the subspace $(\ker \curlm_1)^\perp $.  By statement (i) of Theorem \ref{t:HDecOm},  $ \xi^{-1} \curlm H^1 (\Om,\CC^3)$ is a closed subspace of $ L^2_\xi (\Om,\CC^3)$. The bounded inverse theorem implies that $\xi^{-1} \curlm_{\perp}:(\ker \curlm_1)^\perp  \to \xi^{-1} \curlm H^1 (\Om,\CC^3)$ is a homeomorphism. Using \eqref{e:grad0homeom} and the fact that
$\HH_2 ( \Om,\xi) $ is finite dimensional, one obtains  statement (ii) of Theorem \ref{t:HDecOm}.
\end{proof}


\vspace{1ex}
\noindent
\emph{Acknowledgments.} I.~Karabash was supported by the Heisenberg Programme (project 509859995) of the German Science Foundation (DFG, Deutsche Forschungsgemeinschaft), by the CRC 1060 `The mathematics of emergent effects' at the University of Bonn, which is funded through DFG, and by the Hausdorff Center for Mathematics funded by DFG  under Germany's Excellence Strategy - EXC-2047/1 - 390685813. 
IK is grateful to the DFG-funded Hausdorff Research Institute for Mathematics for the possibility to participate in the Trimester Program "Mathematics for Complex Materials'' (3.01--14.04.2023) and to the VolkswagenStiftung project   "From Modeling and Analysis to Approximation" for the financial support of his participation in the workshop ''From Modeling and Analysis to Approximation and Fast Algorithms'' (2-6.09.2023), where results related to the topics of the present paper were discussed with other participants.

\vspace{1ex}

\begingroup
\renewcommand{\section}[2]{}%
\small

\endgroup
     
\end{document}